\documentclass[11pt,oneside]{article}
\usepackage{amsthm}
\usepackage{amsfonts}
\usepackage{amssymb}
\usepackage{amsmath}
\usepackage{makeidx}  
\usepackage{hyperref}
\usepackage{graphicx}
\usepackage{graphics}
\usepackage{yfonts}
\usepackage{here}

\usepackage{centernot}

\setlength{\textwidth}{6.0in}
\setlength{\textheight}{7.5in}
\parskip   1.25ex plus0.5ex minus0.25ex
\parindent 0em

\newtheorem{theorem}{Theorem}[section]
\newtheorem{lemma}{Lemma}[section]

\newtheorem{corollary}{Corollary}[section]

\newtheorem{definition}{Definition}[section]

\theoremstyle{remark}

\usepackage{tikz}

\def\to{\rightarrow}

\def\Z{\mathbb Z}
\def\Q{\mathbb Q}

\def\C{\mathbb C}
\def\R{\mathbb R}

\def\to{\rightarrow}

\def\X{\mathcal{X}}

\begin{document}

\title{On CT and CSA Groups and Related Ideas}

\author{Benjamin Fine\\  
Department of Mathematics\\
Fairfield University\\Fairfield, Connecticut 06430\\
United States 
\and
Anthony Gaglione\\
Department of Mathematics\\
United States Naval Academy\\
Annapolis, Maryland 21402\\
United States
\and
Gerhard Rosenberger\\ 
Fachbereich Mathematik\\
University of Hamburg\\
Bundesstrasse 55\\
20146 Hamburg,
Germany
\and
Dennis Spellman\\
Department of Statistics\\
Temple University\\
Philadelphia, Pennsylvania 19122\\
 United States}

\date{ }

\maketitle

\begin{abstract} 
A group is $G$ {\bf commutative transitive} or CT if commuting is transitive on nontrivial elements. A group $G$ is CSA or {\bf conjugately separated abelian} if maximal abelian subgroups are malnormal. These concepts have played a prominent role in the studies of fully residually free groups, limit groups and dicriminating groups. They were especially important in the solution to the Tarski problems. CSA always implies CT however the class of CSA groups is a proper subclass of the class of CT groups. For limit groups and finitely generated elementary free groups they are equivalent. In this paper we examine the relationship between the two concepts. In particular we show that a finite CSA group must be abelian. If $G$ is CT then we prove that $G$ is not CSA if and only if $G$ contains a nonabelian subgroup $G_0$ which contains a nontrivial abelian subgroup $H$ that is normal in $G_0$. For $K$ a field the group $PSL(2,K)$ is never CSA but is CT if char$(K) = 2$ and for fields $K$ of characteristic $0$ where $-1$ is not a sum of two squares in $K$. For characteristic $p$, for an odd prime $p$, $PSL(2,K)$ is never CT. Infinite CT groups $G$ with a composition series and having no nontrivial normal abelian subgroup must be monolithic with monolith a simple nonabalian CT group. Further if a group $G$ is monolithic with monolith $N$ isomorphic to $PSL(2,K)$ for a field $K$ of characteristic $2$ and $G$ is CT then $G \cong N$.  
\end{abstract}

\noindent{\it AMS Subject Classification:} Primary 20F67; Secondary 20F65, 20E06, 20E07\\
{\it Key Words:} commutative tranistive, CSA group, Tarski problems, monolithic group

\bigskip

\section{Introduction} 

A group $G$ is {\bf commutative transitive}, which we will abbreviate by CT, if commutativity is
transitive on nonidentity elements.  Commutative transitivity is a simple idea that suprisingly has
had a wide-ranging impact on many areas of algebra in general and group theory in particular.  Of
special interest is the important role that commutative transitivity has played in the solution
of the celebrated Tarski conjectures. The paper [FR] contains a great deal of information about CT groups in general.

A group $G$ is CSA or {\bf conjugately separated abelian} if maximal abelian subgroups are malnormal (see section 2). CSA implies CT (see section 2) but the class of CSA groups is a proper subclass of the class of CT groups. These two concepts and their relationship have played a major role in the proof of the celebrated Tarski problems (see section 2). A result of Gaglione and Spellman [GS] and independently Remeslennikov [Re] showed that for nonabelian residually free groups, being CT is equivalent to having the same universal theory as a
nonabelian free group (see section 2). This result was one of the initial important steps in the
solution of the Tarski conjectures (see Section 2).

The term {\bf commutative transitive} was coined in [F] relative to free groups and Fuchsian
groups yet the concept appeared in the literature substantially earlier. In some papers a CT
group is referred to as {\bf centralizer abelian} or CA-group since being CT is easily shown to
be equivalent to having all centralizers of nontrivial elements abelian.   

Finite CT groups were studied originally by Weisner [W] in 1925.  He proved that finite CT groups
are either solvable or simple. However there was a mistake in his proof. Yu-Fen Wu in 1997 [Wu]
corrected the mistake and reproved Weisner's result.  She also proved that a finite solvable CT
group is the semidirect product of its Fitting subgroup $F$, which must be abelian. by a fixed point free group of
automorphisms of $F$. Earlier Suzuki [Su], in 1957, using character theory proved that every finite
nonabelian simple CT group is isomorphic to some $PSL(2,2^f), f \ge 2$. 

In this paper, we examine the relationship between the two concepts, CT and CSA. In the next section we review some important material on CT and CSA groups. As mentioned above,  
CSA implies CT however there do exist groups, both finite and infinite which are CT but not CSA. For limit groups, however, as well as elementary free groups and some related groups, the two concepts are equivalent. We provide a quick proof of this for limit groups in  section 3. 

We next consider finite CSA groups and prove, using the results of Wu, that a finite CSA must be abelian. Hence a finite CT group that is not simple and not CSA must have a nontrivial abelian normal subgroup.  For infinite groups we prove that a group $G$ that has a composition series and is CT but not CSA either contains a nontrivial normal abelian subgroup or is monolithic with monolith isomorphic to $PSL(2,K)$ for a field of characteristic $2$. Here we use the fact that CSA is given by a set of universal sentences and hence is true if and only if it is true in subgroups. The equivalence of CT and CSA carries over to the class of $B\X$-groups introduced by Ciobanu, Fine and Rosenberger [CFR]. 

\section{Basic Material on CT and CSA Groups}

A group G is {\bf commutative transitive} or CT if commutativity is transitive on nontrivial
elements.  That is
$$ [x,y]=1 \text { and } [y,z] =1 \implies [x,z] =1$$
provided $x,y,z$ are nontrivial.

It is straightforward that being commutative transitive is equivalent to the property
that the centralizer
of every nontrivial element is abelian.
For this reason CT groups are sometimes called CA groups or centralizer-abelian groups.

 $G$ is CT if and only if it satisfies the universal
sentence
$$
\forall x,y,z(((y\neq1)\wedge(xy=yx)\wedge(yz=zy))\rightarrow(xz=zx)).
$$ 
It is also clear is that if $Z(G) \ne \{1\}$ and $G$ is CT then $G$ is abelian.

It is clear (and follows directly from the fact that CT is captured by a universal sentence) that subgroups of CT groups are CT.

\begin{lemma} If $G$ is CT then any subgroup of $G$ is also CT.
\end{lemma}
Harrison [Ha] first published the following lemma that ties together the CT property with abelian
centralizers.

\begin{lemma} [Ha] Let $G$ be a group. The following three
statements are pairwise equivalent.

(i ) G is commutative transitive.

(ii ) The centralizer $C(x)$ of every nontrivial element in $x \in G$ is abelian.

(iii) Every pair of distinct maximal abelian subgroups in $G$ has trivial
intersection.
\end{lemma}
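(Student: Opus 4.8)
The plan is to establish the cyclic chain of implications $(i)\Rightarrow(ii)\Rightarrow(iii)\Rightarrow(i)$, each step being essentially an unwinding of the relevant definition; pairwise equivalence follows.

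For $(i)\Rightarrow(ii)$ (which is already hinted at in the remarks above), fix a nontrivial $x\in G$ and take $a,b\in C(x)$. If either of $a,b$ is trivial they commute automatically; otherwise $a,b,x$ are all nontrivial with $[a,x]=1$ and $[x,b]=1$, so commutative transitivity gives $[a,b]=1$. Hence $C(x)$ is abelian. For $(ii)\Rightarrow(iii)$, let $M_1\neq M_2$ be distinct maximal abelian subgroups and suppose, for contradiction, that there is a nontrivial $g\in M_1\cap M_2$. Since $g$ commutes with every element of $M_i$, we have $M_i\subseteq C(g)$ for $i=1,2$, and $C(g)$ is abelian by hypothesis. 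But $M_i$ is a maximal abelian subgroup contained in the abelian subgroup $C(g)$, which forces $M_i=C(g)$; thus $M_1=M_2$, a contradiction. Hence $M_1\cap M_2=\{1\}$.

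For $(iii)\Rightarrow(i)$, I would first record that every abelian subgroup of $G$ is contained in a maximal abelian subgroup: the union of a chain of abelian subgroups is again an abelian subgroup, so Zorn's lemma applies. Now take nontrivial $x,y,z$ with $[x,y]=1$ and $[y,z]=1$. Then $\langle x,y\rangle$ and $\langle y,z\rangle$ are abelian, hence contained in maximal abelian subgroups $M_1$ and $M_2$ respectively. Since $y\neq 1$ lies in $M_1\cap M_2$, hypothesis $(iii)$ gives $M_1=M_2$; as $x,z\in M_1$ and $M_1$ is abelian, $[x,z]=1$. Therefore $G$ is commutative transitive.

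The argument is entirely routine; the only points requiring a moment's care are the appeal to Zorn's lemma in $(iii)\Rightarrow(i)$ and the observation used in $(ii)\Rightarrow(iii)$ that a maximal abelian subgroup sitting inside an abelian subgroup must coincide with it. I do not anticipate a genuine obstacle.
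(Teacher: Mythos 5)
Your proof is correct: the cycle $(i)\Rightarrow(ii)\Rightarrow(iii)\Rightarrow(i)$ is complete, with the only non-routine points (the Zorn's lemma argument that maximal abelian subgroups exist, and the use of maximality to force $M_i = C(g)$) handled explicitly. Note that the paper itself states this lemma without proof, citing Harrison [Ha], so there is no in-paper argument to compare against; your argument is the standard one and fills that gap adequately.
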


Finite CT groups were studied originally by Weisner [W] in 1925 
who proved that finite CT groups are either solvable or simple.  However there was a mistake in his
proof that was corrected by Yu-Fen Wu in 1997 [Wu].  She further proved that a finite solvable CT
group is the semidirect product of its Fitting subgroup $F$, which must be abelian, by a fixed
point free group of automorphisms of $F$.  Suzuki [Su] in 1957 using character theory proved that
every finite nonabelian simple CT group is isomorphic to some $PSL(2,2^f), f \ge 2$.

Wu further developed a complete structure theory for
locally finite CT groups analogous to that of finite CT groups. Her main result that is important for this paper is:

\begin{theorem} [Wu] An insolvable locally finite group is CT if and only if $G \cong
PSL(2,F)$ for some locally finite field $F$ of characteristic 2 with $|F| \ge 4$,
\end{theorem}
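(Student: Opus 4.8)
The plan is to prove the biconditional directly, establishing each implication in turn.

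For the implication from right to left everything is essentially a computation. Suppose $G\cong PSL(2,F)$ with $F$ a locally finite field of characteristic $2$ and $|F|\ge 4$. In characteristic $2$ the only scalar matrix of determinant $1$ is the identity, so $SL(2,F)$ has trivial centre and $PSL(2,F)=SL(2,F)$. Local finiteness passes from $F$ to $G$: finitely many elements of $SL(2,F)$ have all their entries in a finite subfield $F_0\subseteq F$, hence lie in the finite group $SL(2,F_0)$. Insolvability holds because $F\ne\mathbb{F}_2$ forces $\mathbb{F}_4\subseteq F$, so $G$ contains $SL(2,\mathbb{F}_4)\cong A_5$ (equivalently, $PSL(2,F)$ is non-abelian simple). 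For the CT property, let $g\in SL(2,F)$ with $g\ne I$; since scalars of determinant $1$ are trivial in characteristic $2$, $g$ is not a scalar matrix, so its centralizer in $M_2(F)$ is the commutative ring $F[g]$, and therefore $C_{SL(2,F)}(g)=F[g]\cap SL(2,F)$ is abelian. By the equivalence $(i)\Leftrightarrow(ii)$ of Harrison's Lemma, $G$ is CT. (Characteristic $2$ is essential: it forces $PSL(2,F)=SL(2,F)$, and in $SL(2,F)$ the centralizer of a nontrivial element is automatically abelian; in other characteristics the quotient $PSL$ can enlarge these centralizers and destroy the CT property.)

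For the converse, assume $G$ is locally finite, CT and insolvable. Every finite subgroup of $G$ is CT (subgroups of CT groups are CT), hence by Weisner's theorem is solvable or simple, and by Suzuki's theorem a non-abelian simple such subgroup is isomorphic to $PSL(2,2^{f})$ for some $f\ge 2$. The first step is to show that $G$ has a finite non-solvable subgroup. If it did not, $G$ would be locally solvable; but every finite solvable CT group $H$ has derived length at most $3$, since by Wu's structure theorem $H=N\rtimes C$ with $N$ its abelian Fitting subgroup and $C$ acting fixed-point-freely, so $C$ is a Frobenius complement which, being a subgroup of $H$, is CT, and a CT Frobenius complement is metabelian (if its order is even its unique involution is central, so CT forces it abelian; if its order is odd it is a $Z$-group, hence metacyclic). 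Then every finite subgroup of $G$ has trivial third derived subgroup, so $G^{(3)}=1$ and $G$ is solvable, a contradiction. Hence $G$ contains a subgroup $H_0\cong PSL(2,2^{f})$ for some $f\ge 2$.

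It then remains to reconstruct $G$ from its finite subgroups. For any finite subgroup $K\le G$, the subgroup $\langle H_0,K\rangle$ is finite, CT, and non-solvable (it contains $H_0$), hence by Weisner and Suzuki $\langle H_0,K\rangle\cong PSL(2,2^{e})$ for some $e\ge 2$, with $K\subseteq\langle H_0,K\rangle$. By Dickson's classification of the subgroups of $PSL(2,2^{e})$, a non-solvable subgroup is a subfield subgroup, so $f\mid e$ and the embedding $H_0\hookrightarrow PSL(2,2^{e})$ is, up to conjugacy, the one induced by $\mathbb{F}_{2^{f}}\subseteq\mathbb{F}_{2^{e}}$. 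The same reasoning applied to any two finite overgroups of $H_0$ shows that the finite subgroups of $G$ containing $H_0$ form a cofinal directed family of groups isomorphic to various $PSL(2,2^{e_i})$ whose transition maps are the standard subfield inclusions, and passing to the direct limit yields $G\cong PSL(2,F)$ with $F=\bigcup_i\mathbb{F}_{2^{e_i}}$, a locally finite field of characteristic $2$ with $|F|\ge 2^{f}\ge 4$. The main obstacle lies in this converse direction, in two places: producing the finite non-solvable subgroup, which rests on the structure theory of finite solvable CT groups and the classical facts about Frobenius complements used above; and then applying Dickson's subgroup theorem carefully enough to see that the embeddings among the finite simple subgroups are honest subfield inclusions, so that the direct limit is literally $PSL(2,F)$ and not some twisted union.
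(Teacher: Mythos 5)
First, a point of comparison: the paper does not prove this statement at all --- Theorem 2.1 is quoted from Wu's paper [Wu] as a known result, so your proposal is being measured against Wu's argument rather than anything in the text. Your right-to-left direction is essentially complete and is in fact more elementary than the machinery the paper deploys elsewhere for the same purpose: the observation that in characteristic $2$ one has $PSL(2,F)=SL(2,F)$ and that the centralizer of a non-scalar $2\times 2$ matrix in $M_2(F)$ is the commutative algebra $F[g]$ gives CT for $PSL(2,K)$ over \emph{any} field of characteristic $2$, with no ultrapowers and no appeal to Wu's theorem (compare the paper's Lemma for the characteristic $2$ case, which goes through algebraic closures and an ultrapower of $SL(2,k)$). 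One slip there: it is not true that $|F|\ge 4$ forces $\mathbb{F}_4\subseteq F$ (take $F=\mathbb{F}_8$); what you need, and what is true, is that $F$ contains some $\mathbb{F}_{2^n}$ with $n\ge 2$, so $G$ contains the nonabelian simple group $PSL(2,2^n)$ and is insolvable. Your reduction in the converse direction --- every finite subgroup is CT, hence by Weisner/Suzuki solvable or isomorphic to $PSL(2,2^f)$; a finite solvable CT group $F\rtimes C$ has a CT Frobenius complement $C$, which is abelian when $|C|$ is even (unique central involution) and metacyclic when $|C|$ is odd, so bounded derived length; local finiteness then converts "no finite insolvable subgroup" into solvability of $G$ --- is correct and is the right way to produce a copy of $PSL(2,2^f)$ inside $G$. (Incidentally, this bounded-derived-length computation is sound; the paper's remark in Section 3 about finite solvable CT groups of every solvability class should be read as referring to infinite solvable CT groups.)

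The genuine gap is the last step, and you have located it yourself but not closed it. Dickson's theorem tells you that each insolvable subgroup of $PSL(2,2^e)$ is a subfield subgroup, i.e. that the \emph{image} of each embedding $PSL(2,2^d)\hookrightarrow PSL(2,2^e)$ is conjugate to the standard copy; it does not by itself say that "the transition maps are the standard subfield inclusions." To identify the directed union with $PSL(2,F)$ you must construct a coherent family of isomorphisms $\psi_i\colon G_i\to PSL(2,\mathbb{F}_{2^{e_i}})$ commuting with the standard inclusions. That requires an extra argument: after conjugating the image onto the standard copy, the discrepancy on the smaller group is an automorphism of $PSL(2,2^d)$, which is inner-by-field, and one must check that such automorphisms extend to the larger group (inner ones by conjugation, field ones because $\mathrm{Gal}(\mathbb{F}_{2^e}/\mathbb{F}_2)$ restricts onto $\mathrm{Gal}(\mathbb{F}_{2^d}/\mathbb{F}_2)$), and then propagate these adjustments coherently through the whole directed system --- straightforward along a countable chain, but $G$ is not assumed countable, so one needs either a Zorn-type argument over partial coherent families or one of the standard devices for locally finite simple groups of Lie rank $1$ (as in Kegel--Wehrfritz, or Wu's own treatment). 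As written, the assertion that the limit is "literally $PSL(2,F)$ and not some twisted union" is exactly the statement that still needs proof, so the hard half of the theorem is an outline of the standard strategy rather than a complete argument.
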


\begin{corollary} A finite CT group is either solvable or simple and isomorphic to $PSL(2,K)$ for some finite field of characteristic 2.
\end{corollary}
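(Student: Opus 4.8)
The plan is to obtain the corollary as a direct specialization of Wu's structure theorem (Theorem 2.3). First I would note that a finite group is a fortiori locally finite, so Theorem 2.3 applies to any finite CT group $G$: either $G$ is solvable, in which case the first alternative holds and there is nothing more to prove, or $G$ is insolvable, in which case Theorem 2.3 yields $G \cong PSL(2,F)$ for some locally finite field $F$ of characteristic $2$ with $|F| \ge 4$.

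The next step is to upgrade ``locally finite field'' to ``finite field''. Since $G$ is finite and, for a field $F$ of characteristic $2$, one has $PSL(2,F) = SL(2,F)$ (as $-I = I$), which has order $|F|(|F|^2-1)$ when $F$ is finite and is infinite whenever $F$ is infinite, the finiteness of $G$ forces $F$ to be a finite field $K$ of characteristic $2$ with $|K| \ge 4$.

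Finally I would invoke the classical fact that $PSL(2,K)$ is simple for every field $K$ with $|K| \ge 4$; in particular $G \cong PSL(2,K)$ is simple nonabelian, so the second alternative of the dichotomy holds. This completes the proof.

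The argument has essentially no obstacle: the only points beyond quoting Theorem 2.3 are the trivial observation that a finite CT group is locally finite, the elementary remark that a locally finite field over which $PSL(2,-)$ is finite must itself be finite, and the standard simplicity of $PSL(2,q)$ for $q \ge 4$. Thus the corollary is little more than a restatement of Wu's theorem in the finite setting, with the simplicity clause supplied by the classical structure of $PSL(2,q)$.
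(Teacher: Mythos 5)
Your proposal is correct and follows essentially the same route as the paper, which states this corollary as an immediate consequence of Wu's Theorem 2.3 without further argument. Your added details (a finite group is locally finite, finiteness of $G$ forces the field to be finite, and simplicity of $PSL(2,q)$ for $q \ge 4$) are exactly the routine observations needed to make that deduction explicit.
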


There are many examples of classes of CT groups.
  
$\hphantom{xx}$ (1) Free groups: It is well known that centralizers of elements in
nonabelian free groups are all cyclic (see [MKS] or [LS]).  This is usually expressed by saying
that two elements commute only if they are powers of a common element.

$\hphantom{xx}$ (2) Torsion-free hyperbolic groups: Again here centralizers are
cyclic (see [FGMS1]). 

$\hphantom{xx}$ (3) Free solvable groups: This was proved by Wu in the paper cited
above [Wu].  She also proved that there are solvable CT groups of any derived length.

$\hphantom{xx}$ (4) Free metabelian groups: Again see [Wu].

$\hphantom{xx}$ (5) $PSL(2,\R)$  where $\R$ is the real field: (see the next section)

Further from the observation that $PSL(2,\R)$ is CT and the obvious fact that all subgroups of
CT groups are also CT it follows that all {\bf Fuchsian groups} are
CT.  In particular any orientable surface group $S_g$ with $g \ge 2$ must be CT. This is important
relative to the tie between CT groups and residually free groups.

We note that $PSL(2,\C)$ and more generally $PSL(2,K)$ with $K$ an algebraically closed field of characteristic $0$ is never CT (see next section). 

$\hphantom{xx}$ (6) $PSL(2,K)$  where $K$ is a field with $char(K) = 2$: We will prove this in the next section using model theory. This result will be important in examining the relationship between CT and CSA. 

We note that if $K$ is any field of characteristic $p \ne 2$ then $PSL(2,K)$ is {\bf not} CT. This follows from Wu's result on finite CT groups and we give a proof in the next section.   

$\hphantom{xx}$ (7) Tarski groups:  These are simple groups where every proper subgroup
is cyclic hence centralizers are cyclic.
\smallskip

$\hphantom{xx}$ (8) One-relator groups with torsion:  This is a consequence of
results of Pride on two-generator one-relator groups coupled with earlier results of B.B.
Newman and Ree and Mendelsohn (see [FR] and the references there).
\bigskip  

Centralizers of elements and abelian subgroups in general are relatively well understood
relative to group amalgams, that is free products with amalgamation and HNN groups. Using these
ideas, commutative transitivity was studied relative to certain group amalgams by Levin and
Rosenberger [LR].
 
\begin{theorem} [LR] If $G_1$, $G_2$ are CT groups and $H$ is malnormal and proper
in both then the amalgamated product $G_1 \star_H G_2$ is also CT. \end{theorem}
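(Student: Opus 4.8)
The plan is to run the argument on the Bass--Serre tree $T$ of the amalgam $G:=G_1\star_H G_2$ and to reduce everything, via Harrison's lemma, to showing that the centralizer $C_G(y)$ of every nontrivial $y\in G$ is abelian. The first step is to record the structural consequence of malnormality: if $e_1\ne e_2$ are edges of $T$ meeting at a common vertex $v$, then $\mathrm{Stab}(e_1)\cap\mathrm{Stab}(e_2)=1$, since at a vertex of type $G_i$ the incident edge stabilizers are $G_i$-conjugates $g_1Hg_1^{-1},g_2Hg_2^{-1}$ with $g_1^{-1}g_2\notin H$, and $H$ is malnormal in $G_i$. Hence a nontrivial element of $G$ fixes at most one edge at each vertex, so its fixed subtree is connected with all vertices of degree $\le 1$, i.e.\ a single vertex or a single edge; in particular no nontrivial element of $G$ fixes a line of $T$. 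I would also note that, because $H$ is proper in both factors, a coset of $G_1$ is never equal to a coset of $G_2$, so $G$ acts on $T$ without inversions and each element of $G$ is either elliptic (fixes a vertex) or hyperbolic (has an axis along which it translates nontrivially).

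Next comes the case split. If $y\ne 1$ is elliptic, fixing a vertex $v$, and $g$ centralizes $y$, then $y$ also fixes $gv$; since $\mathrm{Fix}(y)$ has diameter $\le 1$ and $v,gv$ have the same type (being $G$-translates) while adjacent vertices of $T$ have opposite types, we must have $gv=v$. Thus $C_G(y)\subseteq\mathrm{Stab}(v)$, i.e.\ $C_G(y)=C_{\mathrm{Stab}(v)}(y)$, and since $\mathrm{Stab}(v)$ is a conjugate of $G_1$ or $G_2$ and hence CT, this centralizer is abelian. If $y$ is hyperbolic with axis $A$, then any element centralizing $y$ preserves $A$ (the axis of $gyg^{-1}=y$), so $C_G(y)\subseteq\mathrm{Stab}(A)$; by the malnormality observation the subgroup of $\mathrm{Stab}(A)$ fixing $A$ pointwise is trivial, so $\mathrm{Stab}(A)$ acts faithfully on the line $A$ and, containing the nontrivial translation $y$, is infinite cyclic or infinite dihedral. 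In either case $C_G(y)=C_{\mathrm{Stab}(A)}(y)$ is the cyclic translation subgroup, hence abelian. Applying Harrison's lemma finishes the proof.

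The main technical friction I expect is entirely in the hyperbolic case: one must be careful that malnormality really does force the kernel of the action of $\mathrm{Stab}(A)$ on $A$ to be trivial (a nontrivial element there would fix two adjacent edges of $A$), and one must correctly compute $C_{D_\infty}(y)$ when $\mathrm{Stab}(A)$ is infinite dihedral and $y$ lies in the translation subgroup. The elliptic case and the ``no inversions'' remark are routine bookkeeping with the tree action. As an alternative one could bypass trees and work with the classical normal-form description of centralizers in an amalgamated product --- splitting into ``conjugate into a factor'' versus ``cyclically reduced of length $\ge 2$'', with malnormality used to confine the first kind of centralizer to a single factor --- but since the translation-length/axis language is precisely the tree language, the tree route seems the cleanest way to organize it.
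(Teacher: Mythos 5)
Your argument is correct, but note that the paper itself gives no proof of this theorem: it is quoted from Levin--Rosenberger [LR], whose original argument is the combinatorial one you mention as an alternative, namely the classical normal-form analysis of commuting elements in an amalgamated product (elements conjugate into a factor versus cyclically reduced elements of length at least two), with malnormality of $H$ used to pin the first kind of centralizer inside a single factor. Your Bass--Serre route is a genuinely different and arguably cleaner packaging of the same facts: malnormality becomes the statement that distinct edges at a vertex of the tree have trivially intersecting stabilizers, which simultaneously forces $\mathrm{Fix}(y)$ to have diameter at most one for elliptic $y\ne 1$ (so $C_G(y)$ lands in a single vertex group, where CT of $G_1,G_2$ finishes via Harrison's lemma) and forces $\mathrm{Stab}(A)$ to act faithfully on the axis $A$ in the hyperbolic case, so that $C_G(y)$ embeds in the centralizer of a nontrivial translation in a subgroup of the infinite dihedral group, hence is (infinite) cyclic. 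Two cosmetic points: the absence of inversions is automatic from the bipartite structure of the tree (properness of $H$ is only needed so the amalgam does not collapse to a factor, in which case the statement is trivial anyway), and in the hyperbolic case you should say the centralizer is \emph{contained in} the translation subgroup of the image of $\mathrm{Stab}(A)$ rather than that it equals the full translation subgroup; neither affects the proof. The tree argument localizes centralizers geometrically and avoids any case analysis on reduced words, while the [LR] approach stays elementary and self-contained within combinatorial group theory.
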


They note that the malnormality condition cannot be relaxed. For example if the values $p,q > 1$ then the
amalgamated product $$\langle a \rangle \star_K \langle b,c;[b,c] = 1 \rangle$$ where $K = \langle a^p \rangle = \langle b^q \rangle$ is not CT.  

The situation for HNN extensions of CT groups is not as general but can be carried through for
extensions of centralizers of abelian malnormal subgroups.  

\begin{theorem} Let $B$ be a CT group and $K$ an abelian malnormal subgroup of $B$.  Then
the HNN extension 
$$B_1 = \langle  t,B ; rel(B), t^{-1}kt = k \text { for all } k \in K  \rangle$$
is a CT-group.
\end{theorem}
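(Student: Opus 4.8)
The plan is to prove, via Harrison's characterization of CT groups in terms of centralizers (Lemma~2.2), that the centralizer in $B_1$ of every nontrivial element is abelian. We may assume $K \neq \{1\}$, since otherwise $B_1 = B \ast \mathbb{Z}$ is CT by Theorem~2.5. Although $B_1$ is the amalgam $B \ast_K (K \times \langle t\rangle)$, Theorem~2.5 does not apply directly because $K$ is not malnormal in the abelian group $K \times \langle t\rangle$; instead I will work with the HNN structure, using Britton's Lemma and the action of $B_1$ on the associated Bass--Serre tree $T$, whose vertex stabilizers are the conjugates of $B$ and whose edge stabilizers are the conjugates of $K$. As a preliminary remark, since $K$ is abelian and malnormal in $B$ it is maximal abelian in $B$: an abelian $A \supseteq K$ centralizes any nontrivial $k \in K$, so $A \subseteq C_B(k)$, and malnormality forces $A = K$. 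Hence $C_B(k) = K$ for all $k \in K \setminus \{1\}$, and $N_B(K) = K$.

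I will treat elliptic elements first. If a nontrivial $g \in B_1$ is conjugate into $B$, we may assume $g \in B \setminus \{1\}$. A standard Britton collapsing argument shows that if $h \in C_{B_1}(g)$ has a reduced form $b_0 t^{\epsilon_1} b_1 \cdots t^{\epsilon_n} b_n$ of $t$-length $n \geq 1$, then $b_n g b_n^{-1} \in K$, so that $g$ is conjugate in $B$ into $K$. Consequently, if $g$ is not conjugate in $B$ into $K$, then $C_{B_1}(g) = C_B(g)$, which is abelian because $B$ is CT. If $g$ is conjugate in $B$ into $K$, we may take $g = k \in K \setminus \{1\}$; the same argument applied to $hkh^{-1} \in K$ forces every $h \in C_{B_1}(k)$ into $\langle K, t\rangle = K \times \langle t\rangle$, which is abelian and centralizes $k$, so $C_{B_1}(k) = K \times \langle t\rangle$. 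The identical computation gives $N_{B_1}(K) = K \times \langle t\rangle$; combining these two facts, any two distinct conjugates of $K$ in $B_1$ intersect trivially, since a shared nontrivial element $x$ would make $C_{B_1}(x)$ equal to two distinct conjugates of $K \times \langle t\rangle$, forcing the two conjugates of $K$ to coincide.

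It remains to handle a hyperbolic $g$, with axis $A \subseteq T$ and translation length $\ell \geq 1$. Then $C := C_{B_1}(g)$ stabilizes $A$ setwise, giving a homomorphism $\pi\colon C \to \operatorname{Isom}(A) \cong D_\infty$ whose kernel $F$ is the pointwise stabilizer of $A$. Since each element of $C$ commutes with the nontrivial translation $\pi(g)$, the image $\pi(C)$ lies in the translation subgroup $\mathbb{Z}$ and is nontrivial. Moreover $F \subseteq \bigcap_{e \in A} G_e$, an intersection of conjugates of $K$, which by the previous paragraph is trivial unless all these edge stabilizers equal a single conjugate $D = cKc^{-1}$; in that case $F = D$ and $g$ normalizes $D$, so $c^{-1}gc \in N_{B_1}(K) = K \times \langle t\rangle$, say $c^{-1}gc = kt^m$, and since $g$ is hyperbolic $m \neq 0$, whereupon a direct computation (or the axis argument above) gives $C_{B_1}(kt^m) = K \times \langle t\rangle$, so that $C_{B_1}(g)$ is a conjugate of $K \times \langle t\rangle$. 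In the remaining case $F = \{1\}$, so $\pi$ embeds $C$ into $\mathbb{Z}$ and $C$ is cyclic. Thus $C_{B_1}(g)$ is abelian in every case, and $B_1$ is CT. I expect the hyperbolic case to be the main obstacle, especially the degenerate sub-case of constant edge stabilizers along the axis: there one must both classify such $g$ up to conjugacy (they are the elements $kt^m$, $k \in K$, $m \neq 0$) and exclude orientation-reversing centralizing elements, each requiring careful bookkeeping with Britton's Lemma, with the identity $N_{B_1}(K) = K \times \langle t\rangle$ --- where the malnormality of $K$ in $B$ and the fact that $t$ genuinely centralizes $K$ are used --- as the key point that reduces this case to the abelian group $K \times \langle t\rangle$.
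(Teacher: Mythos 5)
Your proposal is sound, and there is nothing in the paper to compare it against: Theorem 2.3 is stated without proof (it is quoted alongside the Levin--Rosenberger amalgam result), so your Bass--Serre/Britton treatment stands on its own and is essentially a complete and correct route. The elliptic analysis is right: Britton's Lemma forces any centralizing element of positive $t$-length to conjugate $g$ into $K$; malnormality of $K$ in $B$ (whence $C_B(k)=K$ for $k\in K\setminus\{1\}$) then collapses $C_{B_1}(k)$ and $N_{B_1}(K)$ into $K\times\langle t\rangle$, and the identification $C_{B_1}(kt^m)=K\times\langle t\rangle$ for $m\neq 0$ does follow from the axis argument, since the axis of $kt^m$ is the line $\{t^jB\}$ with every edge stabilizer equal to $K$. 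Your worry about orientation-reversing elements in the hyperbolic case is unnecessary: any isometry of a line commuting with a nontrivial translation is itself a translation, so the image of the centralizer lands in the translation subgroup automatically, exactly as you say.

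One step does need an explicit patch. From a shared nontrivial element $x\in K\cap cKc^{-1}$ you correctly obtain $c\bigl(K\times\langle t\rangle\bigr)c^{-1}=K\times\langle t\rangle$, but you then conclude $cKc^{-1}=K$ without justification; nothing yet rules out an automorphism of the abelian group $K\times\langle t\rangle$ moving $K$. The fix is one line: let $\chi\colon B_1\to\Z$ be the $t$-exponent-sum homomorphism (trivial on $B$, $\chi(t)=1$); then $K=(K\times\langle t\rangle)\cap\ker\chi$ (equivalently, $K$ is exactly the set of elliptic elements of $K\times\langle t\rangle$), and since $\ker\chi$ is normal, conjugation by $c$ carries $K$ to $K$. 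Alternatively, prove directly that $K\cap cKc^{-1}\neq\{1\}$ forces $cKc^{-1}=K$ by the same Britton induction you use for $N_{B_1}(K)$. Two smaller points: the amalgam/free-product result you invoke for the case $K=\{1\}$ is Theorem 2.2, not Theorem 2.5; and in the degenerate hyperbolic sub-case the cleaner order is to note first that $g$ preserves the axis and hence normalizes the common edge stabilizer $D$ (you never actually need $F=D$, whose justification as written quietly presupposes the normalizer identification you derive afterwards).
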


Myasnikov and Remeslennikov in their study of fully residually free groups introduced the concept
of a CSA group (conjugately separated abelian group).  Recall that if $G$ is a group and $H$ a subgroup of $G$ then $H$ is
{\bf malnormal} in $G$ or {\bf conjugately separated} in $G$ provided $g^{-1} Hg
\cap H = 1$  unless $g \in H.$  

Using this we define the concept of a CSA group.

\begin{definition}  A group $G$ is a {\bf CSA-group} or {\bf conjugately separated abelian
group} provided the maximal abelian subgroups are malnormal. 
\end{definition}

Each CSA group must be CT. The converse however is not true in general.
 
\begin{lemma} The class of CSA groups is a proper subclass of the class of CT groups.
\end{lemma}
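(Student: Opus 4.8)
The plan is to prove the two assertions bundled into the statement separately: (a) every CSA group is CT, so that CSA groups genuinely form a subclass of the class of CT groups; and (b) this inclusion is strict, by exhibiting a CT group which is not CSA.

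For (a) I would argue through Harrison's Lemma (Lemma 2.2), showing that in a CSA group $G$ the centralizer $C(x)$ of an arbitrary nontrivial element $x$ is abelian. First, using Zorn's Lemma --- the union of a chain of abelian subgroups all containing $x$ is again an abelian subgroup containing $x$ --- I would choose a maximal abelian subgroup $M$ with $x \in M$. Then I claim $C(x) \subseteq M$: if $g$ commutes with $x$, then $gxg^{-1} = x \in M$, so $x$ lies in $M \cap g^{-1}Mg$; since $x \neq 1$ and $M$ is malnormal, this forces $g \in M$. Hence $C(x) \le M$ is abelian, and $G$ is CT by the equivalence (ii)$\Rightarrow$(i) of Lemma 2.2.

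For (b) I would first record the general observation that no nonabelian group with a nontrivial normal abelian subgroup can be CSA: if $1 \neq N \trianglelefteq G$ is abelian and $M$ is a maximal abelian subgroup of $G$ containing $N$ (such $M$ exists by Zorn's Lemma, as $N$ is abelian), then $M \neq G$ because $G$ is nonabelian, so for any $g \in G \setminus M$ we have $N = g^{-1}Ng \subseteq M \cap g^{-1}Mg$, contradicting malnormality of $M$. It therefore suffices to produce a CT group of this kind. I would take $G = S_3$: a direct check shows the centralizer of each transposition is the order-$2$ subgroup it generates and the centralizer of each $3$-cycle is $A_3$, so every centralizer of a nontrivial element is abelian and $S_3$ is CT by Lemma 2.2; but $A_3 \trianglelefteq S_3$ is a nontrivial normal abelian subgroup, so by the observation $S_3$ is not CSA. (The infinite dihedral group $\langle a,b \mid b^2 = 1,\ bab^{-1} = a^{-1}\rangle$, with the normal subgroup $\langle a\rangle$, works the same way and shows that the phenomenon is not an artifact of finiteness.)

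The whole argument is elementary; the only place demanding a little care is part (a), where one must apply malnormality to the correct conjugate of $M$ and must not omit the Zorn's Lemma step guaranteeing that a maximal abelian subgroup containing $x$ exists at all. I do not anticipate any serious obstacle.
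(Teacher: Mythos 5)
Your proof is correct and follows essentially the same route as the paper: for CSA $\Rightarrow$ CT you apply malnormality to a conjugate of the maximal abelian subgroup containing the common nontrivial element (the paper phrases this via Harrison's condition (iii), you via condition (ii), which is only a cosmetic difference), and for strictness you use the same observation that a nonabelian group with a nontrivial abelian normal subgroup cannot be CSA. Your example $S_3$ is just the case $p=2$, $q=3$ of the paper's nonabelian group of order $pq$ with $p \mid q-1$, so no substantive difference there either.
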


\begin{proof} We first show that every CSA-group is commutative transitive. Let $G$ be a group in
which maximal abelian subgroups are malnormal and suppose that  $M_1$ and $M_2$ are maximal
abelian subgroups in  $G$ with $z \ne1$ lying in $M_1 \cap M_2$. Could we have $M_1 \ne M_2?$ 
Suppose  that $w \in M_1 \setminus M_2$.  Then $w^{-1}zw = z$ is a non-trivial element of
$w^{-1} M_2w \cap M_2$ so that $w \in M_2$. This is impossible and therefore $M_1 \subset  M_2$ .
By maximality we then get $M_1 = M_2$.  Hence, G is commutative transitive whenever all maximal
abelian subgroups are malnormal. 

We now show that there do exist CT groups that are not CSA. In any non-abelian CSA-group the only
abelian normal subgroup is the trivial subgroup 1.  To see this suppose that $N$ is any normal
abelian subgroup of the non-abelian CSA-group $G$. Then $N$ is contained in a
maximal abelian subgroup $M$. Let $g \notin M$. Then
$$N = g^{-1}Ng \cap  N \subset g^{-1} Mg  \cap M.$$ The fact $N \ne 1$ would imply that
$g \in M$ which is a contradiction. 

Now let $p$
and $q$ be distinct primes with $p$ a divisor of $q - 1.$  Let $G$ be the
non-abelian group of order $pq$ . Then it is not difficult to prove that the
centralizer of every non-trivial element of $G$ is cyclic of order either $p$ or
$q$ . Thus $G$ is commutative transitive.  However, the (necessarily unique)
Sylow q-subgroup of $G$ is normal in $G$. Hence from the argument above $G$ cannot be CSA.
\end{proof}

In the next section we give many more examples of CT but non CSA groups.
 
Although the class of CSA groups is a proper subclass of the CT groups, in the presence of full
residual freeness (in fact even in the presence of just residual freeness) they are equivalent (see the next section). Fully residually free groups play a prominent role in the solution of the Tarski problems (see [FGMRS]). Finitely generated fully residually free groups are also known as {\bf limit groups} since they arise (as initially observed by Sela [Se 1-5]) as limits of homomorphisms into free groups.  

\begin{definition} A group $G$ is {\bf residually free} if for each non-trivial $g \in G$
there is a free group $F_g$ and an epimorphism $h_g:G \to F_g$ such that $h_g(g) \ne 1$. 
Equivalently for each $g \in G$ there is a normal subgroup $N_g$ such that $G/N_g$ is free and $g
\notin N_g$. 

The group $G$ is {\bf fully residually free} provided to every finite set $S \subset
G\setminus\{1\} $ of non-trivial elements of $G$ there is a free group $F_S$ and an epimorphism
$h_S:G \to F_S$ such that $h_S(g) \ne 1$ for all $g \in S$.
\end{definition} 

There is a beautiful theorem due independently to Gaglione and Spellman [GS]
and Remeslennikov [Re] tying together full residual freeness, CT and and the property of being universally
free which we will explain shortly. 

In the 1960's G. Baumslag [GB] proved that a surface group is residually free, answering a
question of Magnus.  To do this he introduced what is now called {\bf extensions of centralizers}.
This concept became one of the main tools used by Kharlampovich, Myaasnikov and
Remeslennikov in their structure theory of fully residually free groups and by Kharlampovich and
Myasnikov in their solution to the Tarski problems.  Using some of G.Baumslag's techniques, B. Baumslag proved [BB]

\begin{theorem} If $G$ is residually free then the following are equivalent:

$\hphantom{xxxx}$ (1) $G$ is fully residually free

$\hphantom{xxxx}$ (2) $G$ is CT
\end{theorem}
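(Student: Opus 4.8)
The plan is to establish the two implications separately. That $(1)$ implies $(2)$ is immediate: suppose $G$ is fully residually free and $x,y,z$ are nontrivial with $[x,y]=1=[y,z]$; if $[x,z]\neq1$, apply full residual freeness to the finite set $\{y,[x,z]\}$ to obtain an epimorphism $h\colon G\to F$ onto a free group with $h(y)\neq1$ and $h([x,z])\neq1$, the latter forcing $h(x)\neq1$ and $h(z)\neq1$. Since $h$ preserves commutation, $[h(x),h(y)]=1=[h(y),h(z)]$ with all three images nontrivial in the (commutative transitive) free group $F$, so $[h(x),h(z)]=1$, i.e.\ $h([x,z])=1$, a contradiction; hence $G$ is CT.

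The substance is $(2)\Rightarrow(1)$, and I would prove its contrapositive: if $G$ is residually free but \emph{not} fully residually free, then $G$ is not CT. Call a finite set of nontrivial elements of $G$ \emph{bad} if no single homomorphism of $G$ to a free group is nontrivial on all of it, and fix a bad set $S=\{g_1,\dots,g_n\}$ of least cardinality; residual freeness forces $n\geq2$, and minimality gives, for each $i$, a homomorphism $\phi_i\colon G\to F_i$ to a free group that is nontrivial on $S\setminus\{g_i\}$ and necessarily trivial on $g_i$. The first step is a reduction: \emph{$\langle S\rangle$ is abelian}. Indeed, if $[g_1,g_2]\neq1$ then $S'=(S\setminus\{g_1,g_2\})\cup\{[g_1,g_2]\}$ has fewer than $n$ elements and is again bad, since any homomorphism to a free group nontrivial on all of $S'$ is in particular nontrivial on $[g_1,g_2]$, hence on $g_1$ and on $g_2$, hence on all of $S$ — contradicting minimality. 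Being a finitely generated torsion-free (residually free groups are torsion-free) abelian group, $\langle S\rangle$ is free abelian; and its rank is $\geq2$, because if $\langle S\rangle=\langle c\rangle$ were cyclic, a homomorphism of $G$ to a free group nontrivial on $c$ would be nontrivial on every $g_i$, contradicting badness. So $\langle S\rangle\cong\Z^{s}$ with $s\geq2$, and there is a homomorphism $\lambda\colon\langle S\rangle\to\Z$ nontrivial on every element of $S$ (a finite union of proper subgroups of $\mathrm{Hom}(\langle S\rangle,\Z)$ is proper) which, by badness of $S$, does \emph{not} extend to a homomorphism of $G$ to a free group.

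The remaining step — deducing from this configuration that $G$ fails to be CT, equivalently that $G$ contains a copy of $F_2\times\Z$ (which is residually free, since a direct product of residually free groups is residually free, but visibly not CT) — is the real content of B.\,Baumslag's theorem [BB], and this is where I expect the main obstacle. The idea is that the obstruction to extending $\lambda$ must be carried by a nonabelian part of $G$ attached along the abelian subgroup $A=\langle S\rangle$: one analyzes the failing homomorphisms $\phi_i$ to show that some nontrivial $a\in A$ has nonabelian centralizer $C_G(a)$, which yields noncommuting $x,z\in C_G(a)$ and hence a triple with $[x,a]=[a,z]=1$ but $[x,z]\neq1$, so that $\langle x,z\rangle$ is nonabelian and centralizes the infinite cyclic group $\langle a\rangle$, producing the desired $F_2\times\Z$ inside $G$ and contradicting commutative transitivity. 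Making this last step precise is where residual freeness must be used in an essential way — the soft arguments above never produce a noncommuting pair — and it is the point at which B.\,Baumslag's extensions-of-centralizers techniques enter; the remaining verifications (that $F_2\times\Z$ is residually free and not CT, torsion-freeness of residually free groups, and the elementary facts about subgroups of $\Z^{s}$ used above) are routine.
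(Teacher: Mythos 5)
Your $(1)\Rightarrow(2)$ argument is complete and correct. The problem is $(2)\Rightarrow(1)$: what you give is a reduction, not a proof, and it stops exactly where the theorem begins. The minimal-bad-set analysis (pairwise commutation via the commutator trick, $\langle S\rangle\cong\Z^{s}$ with $s\ge 2$, a character $\lambda$ nonvanishing on $S$) is sound as far as it goes, with one small slip: a finite union of proper subgroups of $\Z^{s}$ \emph{can} cover $\Z^{s}$ (for instance $\Z^2$ is covered by its three subgroups of index $2$), so the correct justification is that each annihilator $\{\lambda:\lambda(g_i)=0\}$ has rank $s-1$, hence infinite index, and finitely many such subgroups cannot cover. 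More seriously, nothing in this reduction uses the hypothesis that $G$ is CT, and the configuration you arrive at (an abelian subgroup $A$ of rank at least $2$ together with a character on it that no homomorphism of $G$ onto a free group realizes) is not by itself in tension with commutative transitivity. The step you describe at the end --- that the obstruction ``must be carried by a nonabelian part of $G$ attached along $A$,'' so that some nontrivial $a\in A$ has nonabelian centralizer, giving noncommuting elements in that centralizer and a copy of $F_2\times\Z$ --- is precisely the content of B.~Baumslag's theorem, and no argument for it is offered; you acknowledge this yourself (``the soft arguments above never produce a noncommuting pair''). So the proposal has a genuine gap at the crux of the hard direction.

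For comparison: the paper itself gives no proof of this statement; it is quoted from B.~Baumslag [BB]. The standard argument for CT $\Rightarrow$ fully residually free does not pass through a minimal bad set. In the nonabelian case one fixes $a,b$ with $[a,b]\ne 1$, notes that by CT every nontrivial $g_i$ must fail to commute with at least one of $a,b$ (if it commuted with both, CT would force $[a,b]=1$), and then applies Baumslag's key lemma --- proved by the ``big powers'' argument that the paper sketches after Lemma 2.4 --- to obtain a \emph{single} homomorphism onto a free group under which all the relevant commutators $[a,g_i]$ or $[b,g_i]$, and hence all the $g_i$, survive. That lemma (or an equivalent use of extensions of centralizers) is the missing ingredient in your write-up; the verifications you list as routine (torsion-freeness of residually free groups, residual freeness of $F_2\times\Z$ and its failure to be CT) are indeed routine.
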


Gaglione and Spellman and independently Remeslennikov developed a truly amazing result that in some sense is the beginning of the
solution of the Tarski problems. We first give some ideas from logic and model theory and then a brief introduction to the {\bf Tarski problems}.

We start with a first-order language appropriate for group theory. This language, which we denote by
$L_0$, is the first-order language with equality containing a binary operation symbol $\cdot$, a
unary operation symbol $^{-1}$ and a constant symbol $1$. A {\bf sentence} in this language is a
logical expression containing a string of variables $\overline{x} = (x_1,...,x_n)$, the logical
connectives $\lor,\land,\sim$ and the quantifiers $\forall,\exists$.  A {\bf universal sentence}
of $L_0$ is one of the form $\forall{\overline{x}} \{\phi(\overline{x})\}$ where $\overline{x}$ is a
tuple of distinct variables, $\phi(\overline{x})$ is a formula of $L_0$ containing no quantifiers and
containing at most the variables of $\overline{x}$.  Similarly an {\bf existential sentence} is one
of the form $\exists{\overline{x}}\{\phi(\overline{x})\}$ where $\overline{x}$ and
$\phi(\overline{x})$ are as above. 

If $G$ is a group then the {\bf universal theory} of $G$ consists of the set of all universal
sentences of $L_0$ true in $G$. We denote the universal theory of a group $G$ by $Th_\forall(G)$. 
Since any universal sentence is equivalent to the negation of an existential sentence it follows that
two groups have the same universal theory if and only if they have the same {\bf existential theory}.
The set of all sentences of $L_0$ true in $G$ is called the {\bf first-order
theory} or the {\bf elementary theory} of $G$.  We denote this by $Th(G)$. We note that being
{\bf first-order} or {\bf elementary} means that in the intended interpretation of any formula
or sentence all of the variables (free or bound) are assumed to take on as values only
individual group elements - never, for example, subsets of, nor functions on, the group in which
they are interpreted. The {\bf Tarski conjectures} or {\bf Tarski problems}, solved independently by Kharlampovich and Myasnikov
(see [KhM 1-5]) and Sela (see [Se 1-5]), say essentially that all countable nonabelian free
groups have the same elementary theory.  The following was well-known and much simpler.

\begin{theorem} All nonabelian free groups have the same universal theory.
\end{theorem}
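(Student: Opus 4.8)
The plan is to reduce the statement to the single group $F_2$, the free group of rank $2$, using two elementary facts: universal sentences pass to subgroups, and every finitely generated free group embeds in $F_2$.

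First I would record the basic preservation principle. If $H$ is a subgroup of a group $G$ and $\phi = \forall\,\overline{x}\,\psi(\overline{x})$ is a universal sentence of $L_0$ (with $\psi$ quantifier-free) that holds in $G$, then $\phi$ holds in $H$: every tuple $\overline{a}$ of elements of $H$ is also a tuple of elements of $G$, and $\psi(\overline{a})$ is a Boolean combination of equations between group words evaluated at $\overline{a}$, whose truth value depends only on the operations $\cdot$, $^{-1}$, $1$, which agree on $H$ and on $G$. Hence $Th_\forall(G)\subseteq Th_\forall(H)$ whenever $H\le G$. Reading this in the other direction, a universal sentence fails in $G$ if and only if it fails in some finitely generated subgroup of $G$, since a counterexample is witnessed by a single finite tuple $\overline{a}$ and already fails in $\langle\overline{a}\rangle$.

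Next I would invoke the Nielsen--Schreier theorem: every finitely generated subgroup of a free group is free of finite rank. Combined with the previous paragraph, this gives, for any free group $F$ and any universal sentence $\phi$: $\phi\in Th_\forall(F)$ if and only if $\phi$ holds in $F_r$ for every finite $r$ with $F_r\le F$. Now suppose $F$ is nonabelian, so $\mathrm{rank}(F)\ge 2$; then $F_2\le F$. Moreover $F_2$ itself contains free subgroups of every finite rank: for instance the elements $y^{-n}xy^{n}$ for $n\ge 0$ freely generate a subgroup of $F_2=\langle x,y\rangle$ of countably infinite rank, so each $F_r$ with $r$ finite embeds in $F_2$, hence in $F$; and the trivial group and $\Z$ embed in $F_2$ as well, covering the degenerate ranks $0$ and $1$. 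Thus the isomorphism types of finitely generated subgroups of a nonabelian free group $F$ are exactly $\{F_r : r\ge 0 \text{ finite}\}$, independently of $F$.

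Putting the pieces together: for every nonabelian free group $F$ and every universal sentence $\phi$, the statement $\phi\in Th_\forall(F)$ is equivalent to ``$\phi$ holds in $F_r$ for all finite $r$,'' a condition that makes no reference to $F$. Hence $Th_\forall(F)=Th_\forall(F_2)$ for every nonabelian free group $F$, which is exactly the claim. I do not expect any real obstacle here --- this is the ``much simpler,'' classical predecessor of the Tarski theorems; the only point needing mild care is the uniform handling of subgroups of all finite ranks, including the ranks $0$ and $1$.
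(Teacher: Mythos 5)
Your proof is correct and complete: the preservation of universal sentences under subgroups, the reduction of a failing universal sentence to the finitely generated subgroup generated by a witnessing tuple, Nielsen--Schreier, and the mutual embeddability facts ($F_2\le F$ for any nonabelian free $F$, and every finite-rank free group embeds in $F_2$, e.g.\ via the free subgroup generated by the conjugates $y^{-n}xy^{n}$) together give $Th_\forall(F)=Th_\forall(F_2)$ exactly as you argue. Note that the paper states this theorem without proof, describing it only as well known and much simpler than the Tarski theorems, so there is no in-paper argument to compare against; the argument you give is the standard classical one, and it handles arbitrary (even uncountable) rank correctly by localizing counterexamples to finitely generated subgroups rather than trying to embed $F$ itself into $F_2$.
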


A {\bf universally free group} $G$ is a group that has the same universal
theory as a nonabelian free group and, as we will see, all nonabelian finitely generated fully residually free groups are universally free.

One of the fundamental first steps in handling the Tarski problems was a theorem of Gaglione and Spellman [GS] and independently Remeslennikov [Re] who were able to extend the theorem of
B. Baumslag to show that fully residually free is equivalent to universally free and that these (in
the presence of residual freeness) are equivalent to both being CT and being CSA.  

\begin{theorem} [GS],[Re] If a nonabelian group $G$ is residually free then the following are equivalent:

$\hphantom{xxxx}$ (1) $G$ is fully residually free

$\hphantom{xxxx}$ (2) $G$ is CT

$\hphantom{xxxx}$ (3) $G$ is CSA

$\hphantom{xxxx}$ (4) $G$ is universally free.
\end{theorem}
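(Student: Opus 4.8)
The plan is to establish the cycle $(1)\Rightarrow(4)\Rightarrow(3)\Rightarrow(2)\Rightarrow(1)$, throughout using the standing hypotheses that $G$ is nonabelian and residually free; three of the four links are short and the real work is $(1)\Rightarrow(4)$. First I would record the standard reformulation of the CSA property: a group is CSA if and only if it is CT and satisfies the universal sentence
$$\forall x\,\forall g\,\Big(\big((x\neq1)\wedge\big(x\,(g^{-1}xg)=(g^{-1}xg)\,x\big)\big)\rightarrow(xg=gx)\Big).$$
To see this one uses that in a CT group the centralizer of a nontrivial element sits inside a unique maximal abelian subgroup (by Harrison's lemma), after which a direct check shows that malnormality of all maximal abelian subgroups is equivalent to the displayed condition. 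In particular CSA is captured by a universal sentence of $L_0$. I would also note that every nonabelian free group is CSA (its maximal abelian subgroups are the maximal cyclic subgroups, and cyclic subgroups of a free group are malnormal), and that, by the theorem that all nonabelian free groups have the same universal theory, ``universally free'' is unambiguous and may be tested against $F_2$.

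Granting this, $(4)\Rightarrow(3)$ is immediate: if $Th_\forall(G)=Th_\forall(F_2)$ then $G$ satisfies both the CT sentence and the sentence above, since $F_2$ does, so $G$ is CSA by the reformulation. The link $(3)\Rightarrow(2)$ is just that every CSA group is CT, and $(2)\Rightarrow(1)$ is precisely B. Baumslag's theorem quoted above, that a residually free CT group is fully residually free.

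The substance is $(1)\Rightarrow(4)$: for a nonabelian fully residually free $G$ I want $Th_\forall(G)=Th_\forall(F_2)$, equivalently $Th_\exists(G)=Th_\exists(F_2)$. For the inclusion $Th_\exists(G)\subseteq Th_\exists(F_2)$, take an existential sentence $\exists\overline{x}\,\varphi(\overline{x})$ true in $G$, witnessed by a tuple $\overline{g}$; picking a disjunct of the quantifier-free $\varphi$ that holds at $\overline{g}$ yields finitely many words with $w_j(\overline{g})=1$ and $v_k(\overline{g})\neq1$. By \emph{full} residual freeness there is a homomorphism $\psi\colon G\to F$, with $F$ free, such that $\psi(v_k(\overline{g}))\neq1$ for every $k$; then the tuple $\psi(\overline{g})$ again satisfies that disjunct, and since its finitely many coordinates generate a finite-rank free, hence $F_2$-embeddable, subgroup of $F$, the sentence $\exists\overline{x}\,\varphi(\overline{x})$ holds in $F_2$. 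For the reverse inclusion I would first observe that a nonabelian residually free group contains a copy of $F_2$: choose $a,b\in G$ with $[a,b]\neq1$ and a homomorphism $\theta\colon G\to F$, $F$ free, with $[\theta(a),\theta(b)]\neq1$; then $H=\langle\theta(a),\theta(b)\rangle$ is a two-generated nonabelian subgroup of a free group, hence free of rank $2$ on $\{\theta(a),\theta(b)\}$, so the surjection $\langle a,b\rangle\twoheadrightarrow H$ is split by the map $\theta(a)\mapsto a,\ \theta(b)\mapsto b$, giving $F_2\cong H\hookrightarrow G$. Since the truth of a quantifier-free formula at fixed elements is the same in a group and in any subgroup, every existential sentence true in $F_2$ is true in $G$. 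Hence $Th_\exists(G)=Th_\exists(F_2)$, so $G$ is universally free.

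I expect the main obstacle to be exactly $(1)\Rightarrow(4)$, and within it two points: exhibiting an embedded $F_2$ in an arbitrary nonabelian residually free group — the splitting trick above is the crucial step — and handling the universal/existential duality carefully, in particular seeing that the inclusion $Th_\exists(G)\subseteq Th_\exists(F_2)$ genuinely needs \emph{full} residual freeness (to simultaneously avoid only finitely many prescribed nontrivial elements), while ordinary residual freeness suffices for the other inclusion. Both inclusions can alternatively be packaged through the standard model-theoretic fact that $Th_\exists(A)\subseteq Th_\exists(B)$ holds if and only if $A$ embeds in an ultrapower of $B$, using that a fully residually free $G$ embeds in an ultrapower of $F_2$ and that $F_2$ embeds in $G$.
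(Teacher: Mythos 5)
Your proposal is correct, and it is worth noting that the paper itself does not prove this theorem at all: it is quoted with the citations [GS],[Re], and the only fragment the paper actually argues is Lemma 3.1, the equivalence of (2) and (3) for residually free groups. There the authors also assume B.~Baumslag's theorem to pass from CT to fully residually free, and then verify malnormality of a maximal abelian subgroup $M=C_G(u)$ directly: given $1\neq w\in g^{-1}Mg\cap M$ with $[g,u]\neq 1$, full residual freeness supplies a free quotient in which both $w$ and $[g,u]$ survive, and malnormality of maximal abelian (cyclic) subgroups of free groups yields the contradiction. Your route is genuinely different and more complete: you close the cycle $(1)\Rightarrow(4)\Rightarrow(3)\Rightarrow(2)\Rightarrow(1)$, obtaining CSA not by a direct pullback of malnormality but by first proving the Gaglione--Spellman/Remeslennikov content $(1)\Rightarrow(4)$ (equality of existential theories via full residual freeness in one direction and an embedded $F_2$, produced by your splitting trick through a free quotient, in the other) and then transferring the universal axioms CT and MAL from $F_2$ to $G$; your reformulation of CSA as CT plus a universal malnormality sentence is correct and matches the paper's later use of the (CT, MAL) axiom pair. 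What your approach buys is a self-contained proof of the full four-way equivalence, including statement (4), which the paper only cites; what the paper's Lemma 3.1 argument buys is a shorter, purely group-theoretic derivation of (2)$\Leftrightarrow$(3) that avoids any model-theoretic apparatus. Both arguments take B.~Baumslag's theorem (residually free and CT implies fully residually free) as the external input for the step into (1), so neither is fully from scratch, and your reliance on it is consistent with how the paper frames the result.
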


Since this result  a complete structure theory and algorithmic theory of the fully residually free
groups has been developed. An important aspect of this development is that elements of fully
residually free groups can be expressed as {\bf infinite words} on a generating system.  These
infinite words can be manipulated and handled in an analogous manner to ordinary words in free
groups (see [FGMRS]).

The positive solution to the Tarski Problems (see [KhM1-5],[Se 1-5] and [FGMRS]) is given in the next three theorems:

\begin{theorem} (Tarski 1) Any two nonabelian free groups are elementarily
equivalent. That is any two nonabelian free groups satisfy exactly the same first-order
theory.  \end{theorem}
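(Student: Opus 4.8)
The plan is to reduce the theorem to a single elementary-embedding statement about the standard inclusions $F_n \hookrightarrow F_{n+1}$ and then to invoke the theory of equations over free groups that underlies the Kharlampovich--Myasnikov and Sela solutions of the Tarski problems. The first step is purely model-theoretic: it suffices to prove that for every $n$ with $2 \le n < \omega$ the canonical embedding $F_n \hookrightarrow F_{n+1}$ is \emph{elementary}. Granting this, $F_2 \preceq F_3 \preceq F_4 \preceq \cdots$ is an elementary chain of subgroups whose directed union is $F_\omega$, so by the elementary chain theorem (Tarski) $F_n \preceq F_\omega$ for every finite $n \ge 2$; since an elementary embedding forces elementary equivalence and the latter is an equivalence relation, all nonabelian free groups of countable rank are elementarily equivalent, and the case of uncountable rank follows from the same directed-union argument applied to the finitely generated free factors. (By a theorem of Perin a finitely generated subgroup of a free group is an elementary subgroup precisely when it is a free factor, so this reduction is in fact sharp.) Thus everything is concentrated in the claim $F_n \preceq F_{n+1}$.

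The second step reduces the complexity of the formulas that must be controlled. By the quantifier-reduction results for free groups (Kharlampovich--Myasnikov; Sela), the first-order theory of a nonabelian free group is governed by a bounded fragment: every formula is equivalent, modulo $Th(F)$, to a Boolean combination of $\forall\exists$-formulas. Consequently, to establish $F_n \preceq F_{n+1}$ it is enough to show that $F_n$ and $F_{n+1}$ satisfy exactly the same $\forall\exists$-sentences with parameters from $F_n$; that is, for every finite system of equations and inequations $\psi(\bar x, \bar y)$ with coefficients in $F_n$,
$$F_{n+1}\models \forall \bar x\, \exists \bar y\, \psi(\bar x, \bar y) \iff F_n \models \forall \bar x\, \exists \bar y\, \psi(\bar x, \bar y).$$
The substantive direction is from $F_{n+1}$ to $F_n$: given $\bar a \in F_n$ one must produce a witness $\bar b \in F_n$, knowing only that a witness exists in the larger group. (The parameter-free instance of the reverse direction, for purely equational $\psi$, is already the statement recorded above that all nonabelian free groups share a universal theory.)

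The third step produces the witness by means of the Makanin--Razborov machinery. The set of solutions in $F_{n+1}$ of the equational part of $\psi$ is organized by a finite rooted tree, the Makanin--Razborov diagram, of epimorphisms onto \emph{coordinate groups}, each of which is an iterated extension of centralizers --- an $\omega$-residually free tower --- so that every solution factors through a nondegenerate solution of one of finitely many NTQ systems sitting on the branches. On each branch the implicit function / formal solution theorems of Kharlampovich--Myasnikov (the extension of Merzlyakov's theorem from positive sentences to these towers) provide a parametric witness: the entries of $\bar y$ are prescribed as fixed words in $\bar x$ and constants that are valid over the corresponding coordinate group. Since those words involve only the letters of $\bar x$ and constants from $F_n$, specializing them at $\bar a$ delivers the required $\bar b \in F_n$. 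The inequational constraints of $\psi$, and the specializations $\bar a$ that fall on degenerate (lower-dimensional) strata rather than on a generic branch, are dealt with by a descending induction: such strata are cut out by proper quotients whose coordinate groups are strictly simpler --- free groups of smaller rank, or shorter towers --- to which the claim applies inductively.

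The main obstacle is plainly the machinery imported in the last two steps: the construction and analysis of the Makanin--Razborov diagrams (a complete parametrization of all solutions of a system of equations over a free group), the formal-solution / implicit-function theorems that lift existential witnesses uniformly along the diagram, and the quantifier-reduction result of the second step --- all of comparable depth and in practice proved together. These rest on Makanin's algorithm for equations over free groups, Razborov's description of their solution sets, the algebraic geometry over groups of Baumslag--Myasnikov--Remeslennikov, the structure theory of limit groups, and JSJ and Bass--Serre decompositions to organize the descent. Everything outside that core --- the elementary-chain reduction and the branch-by-branch specialization of the formal solutions --- is bookkeeping.
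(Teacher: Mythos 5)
The paper does not actually prove this statement: ``Tarski 1'' is recorded as the Kharlampovich--Myasnikov/Sela solution of the Tarski problem and is simply cited ([KhM 1-5], [Se 1-5], [FGMRS]), so there is no internal proof to compare yours against. What you have written is a reasonable survey-level roadmap of how that cited solution goes (reduce to elementarity of the inclusions $F_n \hookrightarrow F_{n+1}$ and use an elementary chain/directed union argument; then Merzlyakov-type formal solutions, implicit function theorems over NTQ systems, and Makanin--Razborov diagrams), but as a proof it is essentially all gap: every substantive step is imported wholesale from the very body of work whose main theorem you are being asked to establish, and you say so yourself when you concede that the ``main obstacle'' is the imported machinery. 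A proof proposal that defers the construction of Makanin--Razborov diagrams, the implicit function/formal solution theorems, and the quantifier reduction to citations has not reduced the problem at all.

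Two specific points deserve flagging even at the level of the outline. First, your second step is circular in structure: the collapse of arbitrary formulas to Boolean combinations of $\forall\exists$-formulas is itself one of the deepest outputs of the Kharlampovich--Myasnikov/Sela program, obtained alongside (not prior to) elementary equivalence; moreover, to use such an equivalence of formulas with parameters simultaneously in $F_n$ and $F_{n+1}$ you need it to hold uniformly across both groups, which essentially presupposes that they already share a theory. The actual arguments instead proceed by induction on quantifier depth via the iterative ``validation of a sentence'' procedure. Second, your third step understates the $\forall\exists$ case: once inequations and non-generic specializations of the universal variables are present, a single application of the implicit function theorem plus a ``descending induction on strata'' does not suffice -- this is precisely where the core-resolution/iterative machinery of [Se 4] and the bulk of [KhM 3-5] are needed, and it is not bookkeeping. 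Within the present paper the correct treatment of this theorem is exactly what the authors do: state it and cite the solutions.
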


\begin{theorem} (Tarski  2) If the nonabelian free group $H$ is a free factor
in the free group
$G$ then the inclusion map $H \rightarrow G$ is an elementary embedding (see [FGMRS] for a precise definition of elementary embedding).
\end{theorem}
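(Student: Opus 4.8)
This is one of the two headline theorems in the solution of the Tarski problems, so I do not expect a short self-contained argument; the plan is to record the skeleton and to locate precisely where the genuinely hard analytic work sits, with the understanding that the details are those of [KhM 1--5], [Se 1--5] and [FGMRS].

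First I would reduce the claim to a Tarski--Vaught criterion: since $H$ is a subgroup of $G$, the inclusion is an elementary embedding provided that for every first-order formula $\psi(y,\overline{x})$ of $L_0$ and every tuple $\overline{h}$ of elements of $H$, whenever $G \models \exists y\,\psi(y,\overline{h})$ holds there is some $h' \in H$ with $G \models \psi(h',\overline{h})$. Half of the transfer is then automatic: because $H$ is a subgroup, every quantifier-free formula with parameters from $H$ has the same truth value in $H$ and in $G$, so existential formulas always go up from $H$ to $G$, and, using the retraction $\rho\colon G \to H$ that comes from a free-product decomposition $G = H * K$, one can push the positive witnesses back down. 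The real task is to do this for an \emph{arbitrary} formula $\psi$, not merely a quantifier-free one.

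For that step I would invoke the structure theory of solution sets of systems of equations and inequations over free groups. Starting from a witness $g \in G$ for $\exists y\,\psi(y,\overline{h})$, the goal is to replace $g$ by an element of $H$ without disturbing the fixed (but arbitrarily complicated) formula $\psi$. The equational, positive part of $\psi$ would be handled by \emph{formal solutions} in the sense of Merzljakov: passing to the canonical Hom-diagram (Makanin--Razborov diagram) of the equational part, the existentially quantified variables get expressed as words in $\overline{h}$ and a few auxiliary parameters that one is free to specialize inside $H$; the inequational part would be handled by \emph{test sequences}, typically by specializing the auxiliary parameters to high powers $t^{n}$ of a suitably chosen element $t$ of $H$, so that for $n$ large each of the finitely many inequations occurring in $\psi$ survives because high powers of $t$ do not cancel. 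The hard part --- and this really is the content of the Tarski solution rather than a routine check --- will be to show that the finite Makanin--Razborov diagram of the equational part has at least one branch whose bottom specializations can all be realized inside $H$ while simultaneously keeping every strict inequality of $\psi$ strict; that is exactly where the Implicit Function Theorem over free groups of Kharlampovich--Myasnikov, or Sela's parametric families of formal solutions together with the fine analysis of test sequences, become unavoidable. A more streamlined packaging of the same input would be to cite Perin's theorem, that a subgroup of a torsion-free hyperbolic group is elementarily embedded exactly when the ambient group is a hyperbolic tower over it, and to note that $G = H * F_{n}$ is visibly a hyperbolic tower over $H$; but this only relocates, and does not remove, the deep ingredient.
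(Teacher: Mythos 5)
The paper itself gives no proof of this statement: it is quoted as one of the three Tarski theorems, with the proof attributed wholesale to [KhM 1--5], [Se 1--5] and the exposition [FGMRS]. So there is no argument of the paper to compare yours against; your proposal has to be judged as a sketch of the literature's proof, and as such its skeleton (Tarski--Vaught test, the retraction coming from $G = H \ast K$, Merzljakov-type formal solutions over Makanin--Razborov diagrams, test sequences for the inequations, or alternatively the hyperbolic-tower packaging) does point at the right machinery, and you are right that the implicit function theorem over free groups / formal-solution analysis is the irreducible core rather than something that can be checked by hand.

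There is, however, a genuine gap inside the sketch as written. After the Tarski--Vaught reduction the formula $\psi(y,\overline{x})$ is an \emph{arbitrary} first-order formula, but from that point on you treat it as if it were quantifier-free: you speak of its ``equational, positive part'' and its ``inequational part,'' which only makes sense for a boolean combination of equations and inequations. The formal-solution-plus-test-sequence argument you describe is exactly the proof that $\forall\exists$-sentences (with parameters) transfer; for formulas of higher quantifier depth the proof requires the induction on quantifier alternations, i.e.\ the quantifier-elimination theorem that every definable set over a free group is defined by a boolean combination of $\forall\exists$-formulas (Sela's Diophantine Geometry V--VI, Kharlampovich--Myasnikov's elimination process), and your sketch never engages that step, which is where most of the length of those papers lies. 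Two smaller points: pushing witnesses down by the retraction $\rho\colon G \to H$ is only sound for positive formulas, since $\rho$ can collapse an inequation, so even at the purely existential level one already needs the big-powers/test-sequence device; and the tower criterion you invoke is misattributed in the direction you need --- Perin proved that elementarily embedded subgroups must be tower retracts, while the implication you actually use (a hyperbolic tower over $H$, such as $H \ast F_n$, is elementarily embedded over $H$) is part of the Sela / Kharlampovich--Myasnikov solution itself, so, as you concede, it relocates rather than removes the deep ingredient.
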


The question also arises concerning the {\bf decidability} of the theory of the nonabelian free groups,   The {\bf decidability } of the theory of nonabelian free groups
means the question of whether there exists a recursive algorithm which, given a sentence $\phi$ of
$L_0$, decides whether or not $\phi$ is true in every nonabelian free group. Kharlampovich and Myasnikov, in addition to proving the two above Tarski conjectures, also proved the following. 

\begin{theorem} (Tarski 3) The elementary theory of the nonabelian free
groups is decidable.
\end{theorem}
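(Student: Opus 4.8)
Since this is precisely the third Tarski problem, solved by Kharlampovich and Myasnikov, the proof in a survey of this kind amounts to a reference to [KhM 1-5]; what follows is the shape of the argument one would actually carry out. By Tarski 1 (Theorem 2.7 above) all nonabelian free groups are elementarily equivalent, so there is a single theory $Th(F)$ to decide, with $F = F_2$ say. The plan is to establish an \emph{effective elimination of quantifiers}: every first-order formula $\phi(\overline{x})$ of $L_0$ is equivalent, in $F$, to a boolean combination of formulas asserting that $\overline{x}$ does or does not lie in the algebraic set of a system of equations defining a \emph{regular NTQ} (nondegenerate triangular quasi-quadratic) group, and --- crucially --- this equivalence can be produced by an algorithm. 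A sentence is the case where $\overline{x}$ is empty, in which case such formulas reduce to statements about solvability of finite systems of equations and inequations over $F$; these are decidable by Makanin's algorithm and its refinements. Hence $Th(F)$, and so the common elementary theory of all nonabelian free groups, is decidable.

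First I would assemble the two effective tools on which the reduction rests. The first is the algorithmic Makanin--Razborov diagram: for a finitely presented group $H$, the set $\mathrm{Hom}(H,F)$ is described by a finite, constructible diagram of proper epimorphisms between limit quotients of $H$ (composed with automorphisms and with homomorphisms out of a free group), through which every homomorphism $H \to F$ factors. The second is the effective ``implicit function theorem'' and lifting theorems of Kharlampovich--Myasnikov, which allow existential formulas over the coordinate group of a regular NTQ system to be solved by formulas over its lower strata, again algorithmically. With these in hand one argues by induction on the number of quantifier alternations of $\phi$: an $\exists$-block is removed by passing to the Makanin--Razborov diagram of the coordinate group of the underlying system and recording which branches survive; a $\forall$-block is handled dually, via negation. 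The bookkeeping is arranged so that the class of admissible boolean combinations is closed under both operations, which is exactly what makes the induction close.

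The main obstacle --- and the reason the Tarski problems resisted attack for decades --- is keeping every step of this procedure simultaneously correct and genuinely effective. Makanin's algorithm decides a single system; eliminating an alternating prefix instead requires parametrizing, uniformly and with algorithmic control, the \emph{entire} solution set of a system, bounding the complexity of the NTQ groups that can arise, and guaranteeing termination of the recursive descent through limit quotients --- which rests on an effective form of the descending chain condition for limit groups. The technical heart is therefore the effective description of solution sets of equations in free groups, together with the effective ``cut equations'' and lifting machinery; granting those, the decidability statement follows.
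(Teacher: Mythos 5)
Your proposal is correct and matches the paper's treatment: the paper states Tarski 3 without proof, simply attributing it to Kharlampovich and Myasnikov [KhM 1-5], which is exactly the reference you invoke. Your added sketch of the effective quantifier elimination via Makanin's algorithm, algorithmic Makanin--Razborov diagrams, and the effective implicit function theorem over NTQ systems is a faithful outline of their argument, but it goes beyond anything the paper itself attempts.
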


Commutative transitivity becomes essential in  building examples of fully residually free
groups via  the following
construction.

\begin{definition} Let $G$ be a CT group, let  $u\in G \setminus \{1\}$
and let $M =
Z_G(u)$ where $Z_G(u)$ is the centralizer of $u$ in $G$. Suppose $A$ is an
abelian group.  Then
the group
$$ H = \langle G,A;\text { rel } (G), \text { rel } A, [A,z] = 1,   \forall z \in M \rangle$$
is a {\bf centralizer extension} of $G$ by $A$.  If $A = \langle t \rangle$ is cyclic then $H
= G(u,t)$ is the
HNN extension
  $$G(u,t) = \langle G,t; \text { rel } (G), t^{-1} zt = z, \text { for all }z \in M \rangle$$
and is called the
{\bf free rank one extension of the centralizer} $M$ of $u$ in $G$ .
\end{definition}

\begin{lemma} (Baumslag, Myasnikov, Remeslennikov
[BMR3]) Let $G$ be a fully residually free group and $A$ an
abelian fully residually free group. Then a centralizer extension
of $G$ by $A$ is again fully residually free.
\end{lemma}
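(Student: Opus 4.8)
The plan is to realise the centralizer extension as an amalgamated product and then discriminate it into a free group by the classical ``big powers'' shift argument, collapsing $A$ onto $\Z$ and sending the stable letters to large powers of $u$. Since $G$ is fully residually free it is CT (Theorem 2.10), so $M = Z_G(u)$ is abelian; as $A$ is abelian and is forced to commute with $M$, the subgroup of $H$ generated by $M$ and $A$ is $M\times A$ and
$$ H \;\cong\; G *_M (M\times A). $$
If $G$ is abelian, then $M = Z_G(u) = G$ and $H = G\times A$; an abelian fully residually free group embeds in a direct power of $\Z$, hence so does $G\times A$, and subgroups of direct powers of $\Z$ are fully residually free (a generic functional argument), so this case is settled. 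Assume henceforth that $G$ is nonabelian.

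Let $S\subseteq H\setminus\{1\}$ be an arbitrary finite set; we must produce a homomorphism from $H$ to a free group that is nontrivial on every element of $S$. Write each $w\in S$ in a minimal-length form $w = h_0\alpha_1 h_1\alpha_2\cdots\alpha_r h_r$ with $h_j\in G$ and $\alpha_j\in A\setminus\{1\}$; minimality forces $h_j\notin M$ for $0<j<r$, and by the normal form theorem for $G *_M (M\times A)$ this form faithfully witnesses $w$ (so $w\neq 1$ means $r\ge 1$, or $r=0$ and $h_0\neq 1$). Let $T\subseteq G\setminus\{1\}$ be the finite set consisting of $u$, all nontrivial syllables $h_j$ appearing, and all commutators $[h_j,u]$ for inner syllables $h_j$ (these are nontrivial since $h_j\notin Z_G(u)$). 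As $G$ is nonabelian fully residually free, fix a homomorphism $\rho : G\to F$ into a nonabelian free group $F$ that is nontrivial on every element of $T$; then $\rho(u)=f^{m}$ where $f$ is the root of $\rho(u)$ (so $m\neq 0$ and $C_F(\rho(u))=\langle f\rangle$), and $\rho(h_j)\notin\langle f\rangle$ for each inner syllable $h_j$, because $\rho([h_j,u])\neq 1$. As $A$ is fully residually free, fix a homomorphism $\sigma : A\to\Z$ with $\sigma(\alpha_j)\neq 0$ for every $A$-syllable $\alpha_j$ appearing. For $N\in\N$ define $\theta_N : H\to F$ by $\theta_N|_G=\rho$ and $\theta_N(a)=\rho(u)^{N\sigma(a)}$; this is a well-defined homomorphism, since the relations of $G$ factor through $\rho$, since $a\mapsto\rho(u)^{N\sigma(a)}$ is a homomorphism $A\to\langle\rho(u)\rangle$, and since $\rho(u)$ commutes with $\rho(M)$ (as $u\in M$ and $M$ is abelian), so the relations $[A,z]=1$ $(z\in M)$ are respected.

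It remains to establish the key claim: for all sufficiently large $N$, $\theta_N(w)\neq 1$ in $F$ for every $w\in S$ simultaneously. Granting this, $\theta_N$ is a homomorphism of $H$ into a free group nontrivial on $S$, and as $S$ was an arbitrary finite subset of $H\setminus\{1\}$, it follows that $H$ is fully residually free (indeed discriminated by $F$). For the claim, note
$$ \theta_N(w) \;=\; \rho(h_0)\, f^{mN\sigma(\alpha_1)}\,\rho(h_1)\, f^{mN\sigma(\alpha_2)}\cdots f^{mN\sigma(\alpha_r)}\,\rho(h_r). $$
If $r=0$ then $\theta_N(w)=\rho(h_0)\neq 1$ since $h_0\in T$. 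If $r\ge 1$, every exponent satisfies $|mN\sigma(\alpha_j)|\ge |m|N\min_j|\sigma(\alpha_j)|$, which tends to infinity with $N$, so the blocks $f^{\pm mN\sigma(\alpha_j)}$ become arbitrarily long; at each inner position the separating syllable $\rho(h_j)\notin\langle f\rangle$ prevents the two flanking blocks from cancelling through it, while an end position, or a position adjacent to an $M$-valued end syllable, at worst merges powers of $f$ without collapse. A standard free-group cancellation estimate then yields a threshold $N_w$ past which $\theta_N(w)$ cannot reduce to $1$, and $N=\max_{w\in S}N_w$ works.

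The step I expect to be the main obstacle is precisely this last cancellation estimate, carried out uniformly over the finite set $S$, together with the careful handling of the amalgamated-product normal form: one must make rigorous the slogan ``huge powers of $f$ separated by elements outside $\langle f\rangle$ cannot cancel'', watch the boundary syllables $h_0,h_r$, and guard against a vanishing exponent $\sigma(\alpha_j)$ — which is exactly why $\sigma$ is chosen nontrivial on each $A$-syllable. The remaining ingredients (the amalgam description of $H$, well-definedness of $\theta_N$, and the abelian case) are routine.
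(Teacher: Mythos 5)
Your proposal is correct and is essentially the paper's own argument: both rest on G.~Baumslag's ``big powers'' cancellation lemma in a free group, implemented by sending the adjoined abelian part onto large powers of (the image of) $u$ so as to obtain a discriminating family of homomorphisms into a free group. The only organizational difference is that you carry out directly, for the amalgam $G \star_M (M\times A)$, the reduction the paper merely summarizes as ``reduce to free rank one extensions of centralizers'' --- choosing $\rho:G\to F$ to keep the syllables, $u$, and the commutators $[h_j,u]$ alive and collapsing $A$ through $\Z$ onto $\langle\rho(u)\rangle$ --- while both treatments leave the big-powers cancellation estimate itself as the standard cited ingredient.
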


The proof of this result which is fundamental in all further
considerations of fully residually free groups depends on the fact
that the result can be reduced to free rank one extensions of
centralizers and then on the following "big powers" argument (originally developed by G.Baumslag in [GB]). It
is not hard to see that
 in a free group $F$ if
$$b_0t^{n_1}b_1...t^{n_k}b_k = 1$$ for infinitely many values of
$n_1$, infinitely many values of $n_2,...,$ infinitely many values
of $n_k$ then $t$ must commute with at least one of $b_0,...,b_k$.
Hence the family of homomorphisms $\phi_k : F(u,t) \rightarrow F$
from the rank one extension of the centralizer $C_F(u)$ into $F$,
defined for every positive $k$ by $\phi(t) = u^k$ and
$\phi_k{\mid_F} = id$, is a discriminating family, as required.

The class of finitely generated fully residually free groups was introduced in a different
direction by Sela in his proof of the Tarski problems.  In Sela's approach these groups appear as
limits of homomorphisms of a group $G$ into a free group.  In this guise they are called {\bf
limit groups}. Therefore a limit group is a finitely generated fully residually free group (see [FGMRS] for a proof of the equivalence of the two approaches.

As a by-product of the positive solution of the Tarski conjecture it was proved that the class of non-free groups that have exactly the same first order theory as the class of nonabelian free groups was nonempty. Such groups are called {\bf elementary free groups} (or {\bf elementarily free groups}) and both sets of authors provide complete characterizations of the finitely generated instances of them. In the Kharlampovich-Myasnikov approach these are the special NTQ-groups (see [KhM 1-5]). The primary examples of such groups are the orientable surface groups $S_g$ of genus $g \ge 2$ and the nonorientable surface groups $N_g$ of genus $g \ge 4$. That these groups are elementary free provides a powerful tool to prove some results in surface groups that are otherwise quite difficult. For example J.Howie [Ho] and independently O. Bogopolski [Bo], [BoS] proved that a theorem of Magnus about the normal closures of elements in free groups holds also in surface groups of appropriate genus. Their proofs were nontrivial.  However it was proved (see [FGRS] and [GLS]) that this result is first order and hence automatically true in any elementary free group. In [FGRS] a large collection of such results was given. Such results were called {\it something for nothing results}. Of course any such first order result true in a nonabelian free group must hold in any elementary free group.

\section{The Relationship Between CT and CSA}

As mentioned, CSA always implies CT but the class of CSA groups is a proper subclass of the class of CT groups. In this section we prove that $PSL(2,K)$ is never CSA. However if $K$ has characteristic $2$ then $PSL(2,K)$ is always CT while $PSL(2,\R)$ and $PSL(\Q_p)$ are also CT. The groups $PSL(2,K)$ for characteristic an odd prime $p$ are never CT. $PSL(2,\C)$ and more generally $PSL(2,K)$ where $K$ is an algebraically closed field of characteristic $0$ is never CT. Thus several of these types of groups provide an infinite number of example, both finite and infinite of CT non CSA groups. We also prove that a finite CSA group must be abelian. Wu [Wu] proves that there exist finite solvable CT groups for every solvability class. Hence the nonabelian ones provide more examples of CT non CSA groups. 

We first consider some cases where CT and CSA are equivalent.  

\begin{lemma} If $G$ is residually free then CT $\cong$ CSA.
\end{lemma}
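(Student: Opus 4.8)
The real content is the implication CT $\Rightarrow$ CSA for residually free $G$, since CSA $\Rightarrow$ CT holds for every group (Section~2). My plan is to route this through full residual freeness and then reduce the CSA condition, which a priori quantifies over maximal abelian subgroups, to a single universal sentence. First, because $G$ is residually free and CT, B.~Baumslag's theorem (quoted in Section~2) tells us $G$ is fully residually free: for every finite $S\subseteq G\setminus\{1\}$ there is an epimorphism $\varphi_S\colon G\to F_S$ onto a free group with $\varphi_S(s)\neq 1$ for all $s\in S$.

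The heart of the argument is the (standard) claim that a group $H$ is CSA if and only if $H$ is CT and $H$ satisfies the universal sentence
$$\sigma:\qquad \forall x\,\forall y\,\big(\,(x\neq 1)\ \wedge\ ([x,\,y^{-1}xy]=1)\ \rightarrow\ [x,y]=1\,\big).$$
For the forward direction, if $H$ is CSA then the centralizer $C_H(x)$ of a nontrivial $x$ is a maximal abelian (hence malnormal) subgroup, and $[x,y^{-1}xy]=1$ places the nontrivial element $y^{-1}xy$ in $C_H(x)\cap y^{-1}C_H(x)y$, forcing $y\in C_H(x)$, i.e. $[x,y]=1$. For the converse, let $H$ be CT and satisfy $\sigma$, let $M$ be maximal abelian, and suppose $1\neq z\in M\cap g^{-1}Mg$, say $z=g^{-1}mg$ with $m\in M$; commutative transitivity gives $M=C_H(z)$, so $[m,z]=1$, and conjugating by $g^{-1}$ gives $[z,g^{-1}zg]=1$; then $\sigma$ forces $[z,g]=1$, so $g\in C_H(z)=M$, proving $M$ malnormal.

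With the claim in hand the rest is short. Free groups are CSA (classical), so every free group satisfies $\sigma$. If $\sigma$ failed in $G$, say for $a\neq 1$ and $b$ with $[a,b^{-1}ab]=1$ but $[a,b]\neq 1$, then applying the discrimination from the first paragraph to $S=\{a,[a,b]\}$ produces $\varphi\colon G\to F$ with $\varphi(a)\neq 1$ and $\varphi([a,b])\neq 1$; but then in $F$ we would have $\varphi(a)\neq 1$, $[\varphi(a),\varphi(b)^{-1}\varphi(a)\varphi(b)]=1$ and $[\varphi(a),\varphi(b)]\neq 1$, contradicting that $F$ satisfies $\sigma$. Hence $G$ satisfies $\sigma$, and since $G$ is CT the claim gives that $G$ is CSA.

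I expect the only delicate point to be the reduction of CSA-ness to $\sigma$ together with CT in the middle paragraph; the discrimination and transfer steps are routine, using nothing beyond B.~Baumslag's theorem and the elementary fact that $\sigma$ holds in free groups. I note a shorter but less self-contained alternative: for nonabelian $G$ the lemma is essentially the clause (2)$\Leftrightarrow$(3) of the theorem of Gaglione--Spellman and Remeslennikov quoted above, while the abelian case is immediate since an abelian group is its own unique — and trivially malnormal — maximal abelian subgroup.
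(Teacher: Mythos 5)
Your proof is correct and follows essentially the same route as the paper: B.~Baumslag's theorem to upgrade to full residual freeness, then a discriminating epimorphism onto a free group that preserves a witness to the failure of malnormality, contradicting the fact that maximal abelian subgroups of free groups are malnormal. The only real difference is packaging: you encode the failure of malnormality (in the presence of CT) as the failure of the universal sentence $\sigma$ and transfer that sentence, whereas the paper transfers the witnessing elements $w$, $g$, $u$ directly.
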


\begin{proof} We assume Benjamin Baumslag's Theorem. CSA always imples CT so we assume that $G$ is CT and show that it must be CSA. From Baumslag's theorem since $G$ is residually free and CT it must be fully residually free so that we can assume that $G$ is a fully residually free group with more than one element. Let $u \in G \setminus \{1\}$ and let $M$ be its centralizer which we will denote by $C_G(u)$. Then
$M$ is maximal abelian in $G$. We claim that $M$ is malnormal in $G$. If $G$ is abelian, then
$M = G$ and the conclusion follows trivially. Suppose that $G$ is non-abelian.
Suppose that $w = g^{-1} zg \ne1$ lies in $g^{-1} Mg \cap M $.  If $g \notin M$
then $[g,u] \ne 1$. Thus, there is a free group $F$ and an epimorphism $h:G \to
F, x \to \overline{x}$, such that $ \overline{ w} \ne 1$ and $[ \overline{ g},
\overline{ u}] \ne 1$ . Let $C = C_F(\overline{u})$ . Then $ \overline{ w} \in  \overline{
g}^{-1}C \overline{ g}  \cap C$.  However the maximal abelian subgroups in a
free group are malnormal. This implies $ \overline{ g}  \in C$, contradicting
$[ \overline{ g}, \overline{ u}] \ne 1$. This contradiction shows that
$g^{-1}Mg \cap M \ne 1$ implies $g \in M$ and hence the maximal abelian
subgroups in $G$ are malnormal.
\end{proof}

Ciobanu, Fine and Rosenberger [CFR] generalized Benjamin Baumslag's theorem to what are called the class of $B\X$-groups. A class of groups $\X$ satisfies the property $B\X$ if a group $G$ is fully residually $\X$ if and only if $G$ is residually $\X$ and CT.

With this definition B. Baumslag's original theorem says that the class of free groups $\mathcal{F}$ satisfies $B\mathcal{F}$.

In [CFR] it was proved that a class of groups $\X$ satisfies $B\X$ under very mild conditions and hence the classes of groups for which this is true is quite extensive. In any class of groups satisfying $B\X$ the properties CT and CSA are equivalent.

\begin{theorem} (see [CFR]) Let $\X$ be a class of groups such that each nonabelian $H \in \X$ is CSA. Let $G$ be a nonabelian and residually $\X$ group.  Then the following are equivalent

$\hphantom{xx}$ (1) $G$ is fully residually $\X$

$\hphantom{xx}$ (2) $G$ is CSA

$\hphantom{xx}$ (3) $G$ is CT

Therefore the class $\X$ has the property $B\X$.

\end{theorem}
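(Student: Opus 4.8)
The plan is to run through the implications $(2)\Rightarrow(3)$, $(1)\Rightarrow(3)$, $(3)\Rightarrow(1)$ and $(3)\Rightarrow(2)$, which together yield all the equivalences and in particular the assertion $(1)\Leftrightarrow(3)$ that $\X$ has property $B\X$. The first observation, used throughout, is that \emph{every} member of $\X$ is CT: an abelian group is trivially CT, and a nonabelian member of $\X$ is CSA by hypothesis, hence CT since CSA always implies CT. Given this, $(2)\Rightarrow(3)$ is immediate, and $(1)\Rightarrow(3)$ is a short separation argument: if $G$ is fully residually $\X$ and $x,y,z$ are nontrivial with $[x,y]=[y,z]=1$ but $[x,z]\ne1$, pick a homomorphism $\phi\colon G\to H\in\X$ nontrivial on both $[x,z]$ and $y$; in the CT group $H$ we then have $[\phi(x),\phi(y)]=[\phi(y),\phi(z)]=1$ with $\phi(y)\ne1$, forcing $[\phi(x),\phi(z)]=1$ and so $\phi([x,z])=1$, a contradiction. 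Hence $G$ is CT (and residually $\X$ by hypothesis).

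The implication $(3)\Rightarrow(1)$ — residually $\X$ together with CT forces fully residually $\X$ — is precisely the statement that $\X$ enjoys property $B\X$, and this is where the real work lies. I would obtain it directly from the general criterion of [CFR]: since every member of $\X$ is CT (the nonabelian ones being CSA), the mild hypotheses under which [CFR] establish $B\X$ are satisfied, and the conclusion follows. This step is the evident generalization of B.\ Baumslag's theorem and carries the delicate, big-powers-flavoured content, so I regard it as the main obstacle and would invoke [CFR] rather than reprove it.

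For $(3)\Rightarrow(2)$ I would transcribe the argument already used for residually free groups. Assume $G$ is CT and residually $\X$, so by the previous step $G$ is fully residually $\X$; if $G$ is abelian it is trivially CSA, so assume $G$ is nonabelian. Since $G$ is CT, the maximal abelian subgroups of $G$ are exactly the centralizers $M=C_G(u)$ with $u\ne1$, so it suffices to show each such $M$ is malnormal. Suppose not: say $w=g^{-1}zg\ne1$ lies in $g^{-1}Mg\cap M$ with $g\notin M$. Then $[g,u]\ne1$, while $[z,u]=[w,u]=1$ because $z,w\in M$. Using full residual $\X$-ness, choose $\phi\colon G\to H\in\X$ nontrivial on each of $u$, $z$, $w$ and $[g,u]$. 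Then $H$ is nonabelian (it contains $\phi([g,u])\ne1$), hence CSA; and since $H$ is CT, $C:=C_H(\phi(u))$ is a maximal abelian subgroup of $H$, therefore malnormal. Now $\phi(z),\phi(w)\in C$ (from $[z,u]=[w,u]=1$), while $\phi(w)=\phi(g)^{-1}\phi(z)\phi(g)\in\phi(g)^{-1}C\phi(g)$, so $\phi(w)$ is a nontrivial element of $\phi(g)^{-1}C\phi(g)\cap C$. Malnormality of $C$ forces $\phi(g)\in C=C_H(\phi(u))$, whence $\phi([g,u])=[\phi(g),\phi(u)]=1$, contradicting the choice of $\phi$. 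Hence $M$ is malnormal, $G$ is CSA, and the three conditions are equivalent; $(1)\Leftrightarrow(3)$ then records that $\X$ has property $B\X$.
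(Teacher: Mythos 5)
The paper itself gives no proof of this theorem: it is stated with the attribution ``(see [CFR])'' and is simply imported, so there is no internal argument to compare yours against line by line. With that understood, your proposal is consistent with the paper's treatment and the parts you do prove are correct. The implications $(2)\Rightarrow(3)$ and $(1)\Rightarrow(3)$ are handled properly (your separation argument only needs $\phi(y)\ne 1$ and $\phi([x,z])\ne 1$, and the observation that every member of $\X$ is CT because the nonabelian ones are CSA is exactly the right reduction). Your $(3)\Rightarrow(2)$ argument is the natural transcription of the paper's own Lemma 3.1 (the residually free case): you use full residual $\X$-ness to separate $u$, $z$, $w$, $[g,u]$, note the image group is nonabelian hence CSA, identify $C_H(\phi(u))$ as maximal abelian (valid since $H$ is CT) and apply malnormality; this is sound, and in fact separating $w$ and $[g,u]$ alone would suffice.

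The one caveat worth naming: your $(3)\Rightarrow(1)$ step --- that residually $\X$ plus CT forces fully residually $\X$ --- is precisely the substantive content of the theorem being stated, and you dispose of it by citing the ``general criterion'' of [CFR], i.e.\ essentially the result itself. As a self-contained proof this leaves the B.~Baumslag-type core (the analogue of the big-powers/discriminating-family argument) unproved; since the paper also supplies no proof and defers to [CFR], this matches the paper's level of detail, but you should be aware that everything difficult in the theorem is concentrated in exactly the step you have outsourced.
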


It follows that a class of groups $\X$ satisfies $B\X$ if each nonabelian $H \in \X$ is CSA. Examples of $B\X$ classes abound. In particular in [CFR] the following are listed.

\begin{theorem} Each of the following classes satisfies $B\X$:

$\hphantom{xx}$ (1) The class of nonabelian free groups.

$\hphantom{xx}$ (2) The class of limit groups.

$\hphantom{xx}$ (3) The class of noncyclic torsion-free hyperbolic groups (see [GKM]).

$\hphantom{xx}$ (4) The class of noncyclic one-relator groups with only odd torsion (see [GKM]).

$\hphantom{xx}$ (5) The class of cocompact Fuchsian groups with only odd torsion.

$\hphantom{xx}$ (6) The class of noncyclic groups acting freely on $\Lambda$-trees where $\Lambda$ is an ordered abelian group (see [CFR])

$\hphantom{xx}$ (7) The class of noncylic free products of cyclics with only odd torsion (see [GKM])

$\hphantom{xx}$ (8) The class of noncyclic torsion-free RG-groups (see[FMgrRR] and [CFR]).

$\hphantom{xx}$ (9) The class of conjugacy pinched one-relator groups of the following form

$$G = \langle F,t; tut^{-1} = v\rangle $$

where $F$ is a free group of rank $n \ge 1$ and $u,v$ are nontrivial elements of $F$ that are not proper powers in $F$ and for which $\langle u\rangle  \cap  x\langle v\rangle x^{-1} = \{1\}$ for all $x \in F$.

\end{theorem}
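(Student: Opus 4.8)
The plan is to reduce the whole statement to the criterion just established: by the preceding theorem (from [CFR]), a class of groups $\X$ has the property $B\X$ as soon as every nonabelian member of $\X$ is CSA. So the proof splits into nine verifications of that single fact, and for most of the classes the verification is simply a quotation of a known structure theorem.

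For (1) this is classical: in a nonabelian free group the centralizer of a nontrivial element is infinite cyclic, and each maximal cyclic subgroup is self-normalizing and meets its other conjugates trivially, so free groups are CSA. For (2), a nonabelian limit group is finitely generated and fully residually free, hence CSA by the Gaglione--Spellman and Remeslennikov theorem (equivalently, by the lemma above that CT and CSA coincide under residual freeness, applied through B. Baumslag's theorem). Classes (3), (4) and (7) are covered by [GKM]: a noncyclic torsion-free hyperbolic group is CSA because nontrivial elements have cyclic centralizers and maximal cyclic subgroups are self-normalizing; for a noncyclic one-relator group with only odd torsion, and for a noncyclic free product of cyclics with only odd torsion, the same source shows the odd-torsion hypothesis forces every maximal abelian subgroup to be malnormal. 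Class (5) is the analogous statement for cocompact Fuchsian groups with only odd torsion, which are hyperbolic and CT as discrete subgroups of $PSL(2,\R)$ and are CSA by the same odd-torsion analysis. Class (6) is [CFR]: a noncyclic group acting freely on a $\Lambda$-tree over an ordered abelian group $\Lambda$ has abelian centralizers of nontrivial elements and is CSA. Class (8) is CSA by [FMgrRR] and [CFR].

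That leaves (9), the genuinely hands-on case. Here $G=\langle F,t; tut^{-1}=v\rangle$ is an HNN extension of the free group $F$ with associated cyclic subgroups $\langle u\rangle$ and $\langle v\rangle$. Since $u$ and $v$ are not proper powers in $F$, these associated subgroups are maximal cyclic, hence malnormal, in $F$; and the hypothesis $\langle u\rangle\cap x\langle v\rangle x^{-1}=\{1\}$ for all $x\in F$ says exactly that the two associated subgroups are not conjugate in $F$. Under precisely these separatedness conditions one argues, via Britton's lemma together with the standard description of centralizers and of conjugacy in HNN extensions (in the spirit of the earlier theorem on HNN extensions of CT groups over malnormal abelian subgroups, adapted to the pinched case), that every centralizer of a nontrivial element of $G$ is cyclic and that every maximal abelian subgroup of $G$ is malnormal; hence $G$ is CSA. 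Both hypotheses enter the argument: without ``$u,v$ not proper powers'' a centralizer could fail to be abelian, and without the intersection condition a Baumslag--Solitar--type relation would destroy malnormality.

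I expect (9) to be the main obstacle, since it is the only class for which CSA does not follow by quoting an existing structural theorem, and the argument has to track the reduced forms of elements of the HNN extension carefully enough to rule out both nonabelian centralizers and conjugating elements lying outside a maximal abelian subgroup. A secondary point, relevant to (4), (5) and (7), is that ``only odd torsion'' is essential and not cosmetic: an element of order two already generates, together with a suitable conjugate, an infinite dihedral subgroup whose infinite cyclic part is maximal abelian yet normal, hence not malnormal; such a group is CT but not CSA, and for it the criterion of the preceding theorem would simply not apply.
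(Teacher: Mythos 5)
Your reduction to the preceding [CFR] criterion---verify that every nonabelian member of each class is CSA and then cite the relevant structure results ([GKM], [FMgrRR], [CFR], the Gaglione--Spellman/Remeslennikov theorem for limit groups)---is exactly how the paper handles this theorem, which it states without proof as a list imported from [CFR]. Your hands-on sketch for case (9) goes beyond what the paper records (it simply defers to [CFR] there as well); it is not fully carried out, but the strategy you outline (malnormality of $\langle u\rangle$, $\langle v\rangle$ from the non-proper-power hypothesis, the conjugate-separation condition, and Britton's lemma to control centralizers and conjugators) is the standard and correct one.
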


For the rest of this section we will concentrate on the situations where CT and CSA are not equivalent. That is we will examine CT non CSA groups. We need some prelimiaries. We saw that CT is given by a universal sentence and hence is captured by subgroups. The same is true for CSA

\begin{lemma} If $G$ is a CSA group and $H \subset G$ then $H$ is CSA.
\end{lemma}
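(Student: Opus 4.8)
The plan is to show that the property CSA is captured by a universal sentence of $L_0$ (equivalently, by a set of universal sentences); granting this, the lemma is immediate, since a subgroup $H$ of $G$ is a substructure of $G$ in the language of groups and universal sentences pass to substructures. Concretely, I will prove that a group $G$ is CSA if and only if $G$ satisfies both the CT sentence
$$\forall x,y,z(((y\neq1)\wedge(xy=yx)\wedge(yz=zy))\rightarrow(xz=zx))$$
and the sentence
$$\forall x,y(((x\neq1)\wedge(x(y^{-1}xy)=(y^{-1}xy)x))\rightarrow(xy=yx)).$$
A conjunction of two universal sentences is again equivalent to a universal sentence, so this is enough.

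The first preparatory step is the elementary observation that in any CT group $G\neq\{1\}$ the maximal abelian subgroups are precisely the centralizers $C_G(a)$ of nontrivial elements $a\in G$: by Harrison's Lemma each $C_G(a)$ with $a\neq1$ is abelian, and conversely any abelian subgroup containing a given nontrivial $a$ is contained in $C_G(a)$; maximality then forces the identification in both directions. (The trivial group is CSA, so I may assume $G\neq\{1\}$ throughout.)

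For the forward implication I would assume $G$ is CSA, hence CT, and verify the second sentence: if $x\neq1$ and $y^{-1}xy$ commutes with $x$, put $M=C_G(x)$, which is maximal abelian and therefore malnormal; then $y^{-1}xy$ lies in $M$ and, since $x\in M$, also in $y^{-1}My$, and it is nontrivial, so malnormality gives $y\in M$, i.e. $[x,y]=1$. For the converse I would assume $G$ satisfies both sentences, take a maximal abelian subgroup $M$ and an element $g$ with $1\neq w=g^{-1}mg\in M$ for some $m\in M$; here $m\neq1$, so by the preparatory step $M=C_G(m)$, and from $w\in M$ we get $[m,g^{-1}mg]=1$, whence the second sentence yields $[m,g]=1$ and hence $g\in C_G(m)=M$. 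Thus $M$ is malnormal and $G$ is CSA.

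The one genuinely delicate point — and the reason I would route the argument through logic rather than argue directly — is that a maximal abelian subgroup of $H$ need not be maximal abelian in $G$, so inheriting malnormality "by hand" is awkward. Once CSA is known to be universally axiomatizable this difficulty disappears, and the only thing needing care is the equivalence above, whose single subtle ingredient is the identification of maximal abelian subgroups with centralizers in the commutative-transitive setting.
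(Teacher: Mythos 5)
Your proposal is correct, and it follows the second of the two arguments the paper actually gives: the paper, too, remarks that CSA is captured (together with the group axioms) by a pair of universal sentences and that universal sentences pass to subgroups, but it only \emph{asserts} that axiomatization, whereas you prove it. Your axiomatization also differs slightly: in place of the paper's three-variable sentence MAL you use the leaner two-variable sentence $\forall x,y\,(((x\neq 1)\wedge([x,y^{-1}xy]=1))\rightarrow([x,y]=1))$; the two are equivalent in the presence of CT (MAL specializes to yours by taking $y=x$, and conversely CT upgrades yours to MAL), and your verification of the equivalence with CSA, via the identification of maximal abelian subgroups of a nontrivial CT group with centralizers of nontrivial elements, is sound in both directions. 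The one point where your commentary is off the mark is the claim that the direct argument is awkward because a maximal abelian subgroup of $H$ need not be maximal abelian in $G$: the paper's first (and shorter) proof handles exactly this by placing a maximal abelian $A_H\le H$ inside a maximal abelian $A_G\le G$, using malnormality of $A_G$ to conclude $x\in A_G$, and then noting that $A_G\cap H$ is an abelian subgroup of $H$ containing $A_H$, hence equals $A_H$ by maximality, so $x\in A_H$. So the direct route costs only one extra line; what your route buys is an actual proof of the universal axiomatizability of CSA, which the paper invokes again later (e.g., in the characterization of CT non-CSA groups and in its final axiomatizability theorem) but never establishes.
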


\begin{proof} We first give a direct proof (see [GKM]).  Let $G$ be a CSA group and let $H$ be a subgroup of $G$. Let $A_H$ be a maximal abelian subgroup of $H$. We must show that $A_H$ is malnormal in $H$. Let $x \in H$ with $xA_Hx^{-1} \cap A_H \ne \{1\}$. $A_H$ is contained in a maximal abelian subgroup $A_G$ of $G$. Since $G$ is CSA it follows that $A_G$ is malnormal in $G$ and so $x \in A_G$.  Then $x \in (A_G \cap H) \subset A_H$ and hence $A_H$ is malnormal in $H$.

The result also follows from the fact that CSA can also be described in terms of universal sentences. In particular the CSA property is described by the following pair of universal sentences.

$$\text(CT: ) \forall x,y,z (((y \ne 1) \wedge ([x,y] =1) \wedge ([y,z] = 1)) \rightarrow ([x,z] = 1))$$
$$\text(MAL: ) \forall x,y,z ((x \ne 1) \wedge (y \ne 1) \wedge  ([x,y] =1) \wedge ([x,z^{-1}yz]=1) \rightarrow ([y,z] = 1))$$ 

\end{proof}

Recall that the infinite dihedral group is the free product $D = \Z_2 \star \Z_2$. The group $D$ then has the presentation $D = \langle x,y; x^2 = y^2 = 1\rangle $.  Then $xxyx^{-1} = yxyy^{-1} = yx = (xy)^{-1}$ and hence $D$ is not CSA. 

\begin{lemma} If the group $G$ contains a subgroup isomorphic to the infinite dihedral group then $G$ is not CSA.
\end{lemma}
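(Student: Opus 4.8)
The plan is to reduce the statement immediately to the two facts established just above: the infinite dihedral group $D = \Z_2 \star \Z_2$ is not CSA, and the CSA property is inherited by subgroups.

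First I would recall, or re-derive, why $D$ fails to be CSA, since this is where the real content sits. Write $D = \langle x,y; x^2 = y^2 = 1\rangle$ and put $t = xy$, an element of infinite order. Its centralizer in $D$ is exactly $\langle t\rangle$: the elements of $D$ lying outside $\langle t\rangle$ are precisely those of the form $t^k x$, each an involution, and each conjugates $t$ to $t^{-1}\ne t$ (indeed $x t x^{-1} = x\,xy\,x = yx = t^{-1}$, and $t^k x\, t\, (t^k x)^{-1} = t^k(xtx^{-1})t^{-k} = t^{-1}$), so none centralizes $t$. Hence $\langle t\rangle$ is a maximal abelian subgroup of $D$. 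But $x\langle t\rangle x^{-1} = \langle t^{-1}\rangle = \langle t\rangle$, so $x\langle t\rangle x^{-1}\cap\langle t\rangle = \langle t\rangle\ne\{1\}$ while $x\notin\langle t\rangle$; this violates malnormality of $\langle t\rangle$, so $D$ is not CSA.

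Then I would argue by contradiction: suppose $G$ is CSA and contains a subgroup $H\cong D$. By the preceding lemma every subgroup of a CSA group is CSA, so $H$, and therefore $D$, is CSA --- contradicting the previous paragraph. Hence $G$ is not CSA.

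There is essentially no obstacle here; the lemma is a direct corollary of the subgroup-closure of CSA together with the non-CSA-ness of $D$. The only point requiring a moment's care is the identification of $\langle xy\rangle$ as a maximal abelian subgroup of $D$, which was already carried out in the discussion preceding the statement. Alternatively, one could run the argument entirely through the universal sentences (CT) and (MAL): these hold in $G$, hence in $H\cong D$, but the computation $x(xy)x^{-1} = (xy)^{-1}$ together with $xy\ne 1$ and $[xy,x]\ne 1$ exhibits a failure of (MAL) in $D$, which is the same contradiction.
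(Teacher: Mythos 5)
Your proof is correct and follows essentially the same route the paper intends: the computation $x(xy)x^{-1} = (xy)^{-1}$ shows the infinite dihedral group $D$ is not CSA, and the preceding lemma that CSA is inherited by subgroups (equivalently, that the universal sentence (MAL) passes to subgroups) then rules out CSA for any group containing a copy of $D$. Your only addition is the explicit verification that $\langle xy\rangle$ is maximal abelian in $D$, which the paper leaves implicit; this is a harmless (indeed welcome) filling-in of detail rather than a different argument.
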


\begin{corollary} The modular group $M = PSL(2,\Z)$ is not CSA.
\end{corollary}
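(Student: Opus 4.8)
The plan is to derive the corollary from the immediately preceding lemma: since a group containing an infinite dihedral subgroup cannot be CSA, it suffices to exhibit a subgroup of $M = PSL(2,\Z)$ isomorphic to $D = \Z_2 \star \Z_2$. I would start from the classical decomposition $PSL(2,\Z) \cong \Z_2 \star \Z_3 = \langle a\rangle \star \langle b\rangle$ with $a^2 = 1$ and $b^3 = 1$; concretely one may take $a$ to be the class of $\begin{pmatrix} 0 & -1 \\ 1 & 0\end{pmatrix}$ and $b$ the class of a standard order-three element such as $\begin{pmatrix} 0 & -1 \\ 1 & 1\end{pmatrix}$, but only the abstract free-product structure is needed.

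Next I would set $c = bab^{-1}$. Then $c^2 = 1$, and $c \neq a$ (and $c \neq 1$), so $\langle a, c\rangle$ is generated by two distinct involutions. The key point is that their product $ac = abab^{-1} = abab^{2}$ is, as an element of $\langle a\rangle \star \langle b\rangle$, a reduced word of syllable length four whose syllables alternate between the two free factors; by the normal form theorem for free products every positive power $(ac)^n$ is again reduced of length $4n$, hence nontrivial, so $ac$ has infinite order. Two involutions whose product has infinite order generate an infinite dihedral group, so $\langle a, c\rangle \cong \Z_2 \star \Z_2 = D$. Applying the preceding lemma to the subgroup $\langle a,c\rangle \leq M$ yields that $M = PSL(2,\Z)$ is not CSA.

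I expect no genuine obstacle here: the argument is entirely formal once the free-product structure of $PSL(2,\Z)$ is invoked, and the only step requiring any verification is the normal-form observation that $abab^2$ has infinite order, which is routine. (One could also remark that this is a manifestation of a general phenomenon: any nontrivial free product with a free factor of even order, and more generally any group containing an element of order two that is not central in some two-generator subgroup it lies in, tends to fail CSA because of the normal abelian subgroup $\langle ac\rangle \trianglelefteq \langle a,c\rangle$.)
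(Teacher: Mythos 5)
Your proposal is correct and follows essentially the same route as the paper: both use the decomposition $PSL(2,\Z)\cong \Z_2\star\Z_3$, locate a copy of the infinite dihedral group $\Z_2\star\Z_2$ inside it, and invoke the preceding lemma. The only difference is that you explicitly verify the embedding via the involutions $a$ and $bab^{-1}$ and the normal form theorem, a detail the paper simply asserts as known.
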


\begin{proof} The modular group $M$ is isomorphic to the free product $\Z_2 \star \Z_3$ of a cyclic group of order $2$ and a cyclic group of order $3$. Such a free product contains as a subgroup the free product $\Z_2 \star \Z_2$, a subgroup isomorphic to the infinite dihedral group. Therefore by Lemma 3.4 $M$ cannot be CSA. 
\end{proof}

\begin{corollary} The group $PSL(2,\Q)$ where $\Q$ is the field of rational numbers is not CSA.
\end{corollary}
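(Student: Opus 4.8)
The plan is to exhibit $PSL(2,\Z)$ as a subgroup of $PSL(2,\Q)$ and then invoke the results already established for the modular group. The inclusion of rings $\Z \hookrightarrow \Q$ induces a homomorphism $SL(2,\Z) \to SL(2,\Q)$, which is visibly injective since a $2\times 2$ integer matrix is determined by its entries. Passing to the projective quotients, the center of $SL(2,\Z)$ is $\{\pm I\}$, and it maps into the center $\{\pm I\}$ of $SL(2,\Q)$; hence the induced map $PSL(2,\Z) \to PSL(2,\Q)$ is well defined, and it is injective because a matrix $A \in SL(2,\Z)$ whose image in $SL(2,\Q)$ equals $\pm I$ must already satisfy $A = \pm I$ over $\Z$. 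Thus $PSL(2,\Z)$ embeds as a subgroup of $PSL(2,\Q)$.

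With that embedding in hand there are two equally short routes to the conclusion. The first is to apply Lemma 3.3: if $PSL(2,\Q)$ were CSA, then its subgroup $PSL(2,\Z)$ would be CSA as well, contradicting Corollary 3.5. The second, more self-contained route is to recall from the proof of Corollary 3.5 that $PSL(2,\Z) \cong \Z_2 \star \Z_3$ contains a copy of the infinite dihedral group $\Z_2 \star \Z_2$; consequently $PSL(2,\Q)$ also contains a copy of the infinite dihedral group, and Lemma 3.4 then gives immediately that $PSL(2,\Q)$ is not CSA. I would present the argument via Lemma 3.4 for uniformity with Corollary 3.5, remarking in passing that Lemma 3.3 would do just as well.

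There is essentially no obstacle here; the only point requiring a word of care is the verification that the natural map $PSL(2,\Z) \to PSL(2,\Q)$ is genuinely injective, i.e. that projectivization over the larger field does not identify distinct classes of integer matrices — and this is immediate from the computation of the relevant centers noted above. The result is then simply the specialization to $K = \Q$ of the general observation that $PSL(2,K)$ is not CSA whenever $K$ contains $\Z$ in a way that produces the infinite dihedral subgroup, which is the theme developed in the surrounding corollaries.
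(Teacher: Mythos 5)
Your argument is correct and coincides with the paper's own proof: exhibit $PSL(2,\Z)$ as a subgroup of $PSL(2,\Q)$ and conclude via the fact that CSA passes to subgroups (equivalently, via the infinite dihedral subgroup and Lemma 3.4), exactly as the paper does. The extra verification that projectivization does not spoil injectivity is a harmless elaboration of the same route.
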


\begin{proof} $PSL(2,\Q)$ contains $M$ as a subgroup and hence cannot be CSA.
\end{proof}

Because of Wu Fen's work on finite CT groups (see Theorem 2.1 ) the groups $PSL(2,K)$, where $K$ is a field, figure prominently in the analysis of CT and CSA groups. In [Wu] she also proves that there are finite solvable CT groups for every solvability class.

We prove the following two results. The first is that $PSL(2,K)$ for any field is never CSA.

\begin{theorem} Suppose that $K$ is a field. Then the group $PSL(2,K)$ is not CSA.
\end{theorem}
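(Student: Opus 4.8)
The plan is to exhibit, in an arbitrary $PSL(2,K)$, a nontrivial normal abelian subgroup inside a nonabelian subgroup, and then invoke the fact — established in the proof of Lemma 2.7 — that a nonabelian CSA group has no nontrivial normal abelian subgroup, together with Lemma 3.3 (subgroups of CSA groups are CSA). Concretely, I would work in $SL(2,K)$ first and pass to the quotient. Consider the Borel subgroup $B$ of upper-triangular matrices in $SL(2,K)$ and its unipotent radical $U = \left\{ \bigl(\begin{smallmatrix} 1 & t \\ 0 & 1 \end{smallmatrix}\bigr) : t \in K \right\}$. Then $U \cong (K,+)$ is abelian, $U$ is normal in $B$ (conjugation by a diagonal matrix $\mathrm{diag}(a,a^{-1})$ sends the parameter $t$ to $a^2 t$), and $B$ is nonabelian as soon as $|K| \geq 3$, since the diagonal torus does not centralize $U$. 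Passing to $PSL(2,K) = SL(2,K)/\{\pm I\}$, the images $\bar U$ and $\bar B$ retain these properties: $\bar U$ is a nontrivial abelian subgroup normal in the nonabelian subgroup $\bar B$.

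Given this, the argument closes quickly. If $PSL(2,K)$ were CSA, then by Lemma 3.3 every subgroup of it is CSA, in particular $\bar B$ is CSA. But $\bar B$ is nonabelian and contains the nontrivial abelian normal subgroup $\bar U$, contradicting the observation (from the proof of Lemma 2.7) that a nonabelian CSA group has trivial abelian normal subgroup. Hence $PSL(2,K)$ is not CSA.

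The only genuine issue is the degenerate small-field case: when $|K| = 2$ we have $PSL(2,\mathbb{F}_2) \cong S_3$, which is nonabelian and visibly not CSA (its normal subgroup of order $3$ is abelian), so the statement still holds but the Borel subgroup $\bar B$ itself has order $2$ and the "nonabelian" claim for $\bar B$ fails; one handles this case directly by naming the order-$3$ normal subgroup of $S_3$. When $|K| = 3$, $PSL(2,\mathbb{F}_3) \cong A_4$, and here $\bar B$ does work — it is a nonabelian group of order $6$ (isomorphic to $S_3$) containing $\bar U$ of order $3$ — or one can simply cite the Klein four normal subgroup of $A_4$. So the clean path is: for $|K| \geq 3$ use the Borel/unipotent construction, and for $|K| = 2$ dispatch $S_3$ by hand.

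I expect the main (and essentially only) obstacle to be bookkeeping the passage from $SL(2,K)$ to $PSL(2,K)$ — verifying that $U \cap \{\pm I\} = \{I\}$ so that $\bar U \cong U$ is still nontrivial and abelian, and that $\bar B$ is still nonabelian — but this is a routine check. Everything else is immediate from Lemma 3.3 and the structure of the proof of Lemma 2.7.
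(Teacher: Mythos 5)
Your proof is correct in substance but takes a genuinely different route from the paper, and it needs one numerical repair. The paper argues by characteristic: for char$(K)=p$ odd it quotes that $PSL(2,K)$ is not even CT (which rests on Wu's classification of finite CT groups), for characteristic $0$ it uses the chain $PSL(2,K)\supset PSL(2,\Q)\supset PSL(2,\Z)\supset \Z_2\star\Z_2$ and the infinite dihedral obstruction, and for characteristic $2$ it uses $PSL(2,\Z_2)\cong S_3$ with its normal subgroup of order $3$. Your Borel/unipotent argument is uniform in the characteristic and entirely elementary: it never invokes Wu's theorem or any CT analysis, and it directly exhibits the witness (a nonabelian subgroup containing a nontrivial abelian normal subgroup) that the paper later isolates as the exact obstruction to CSA for CT groups in Section 4. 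The two facts you lean on are indeed available in the paper --- subgroups of CSA groups are CSA, and a nonabelian CSA group has no nontrivial abelian normal subgroup (shown inside the proof that CSA is a proper subclass of CT) --- though your lemma numbers do not match the paper's numbering.

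The repair: the correct threshold for your main construction is $|K|\ge 4$, not $|K|\ge 3$. The Borel subgroup $B\subset SL(2,K)$ is nonabelian exactly when some $a\in K^{*}$ has $a^{2}\ne 1$; in $\mathbb{F}_3$ every nonzero element squares to $1$, so there $B=\{\pm I\}\cdot U$ is abelian of order $6$, and its image in $PSL(2,\mathbb{F}_3)$ is cyclic of order $3$ --- not, as you assert, a nonabelian group of order $6$ isomorphic to $S_3$. Your fallback for that case is fine: $PSL(2,\mathbb{F}_3)\cong A_4$ is nonabelian with the Klein four group as a nontrivial abelian normal subgroup, hence not CSA. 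So the clean case division is: $|K|\ge 4$ via the Borel construction (for $|K|\ge 4$, finite or infinite, the equation $x^{2}=1$ has at most two roots, so a suitable $a$ exists), and the two exceptional fields $\mathbb{F}_2$ and $\mathbb{F}_3$ handled by $S_3$ and $A_4$ respectively; every field other than these two has at least $4$ elements, so nothing is missed. The passage to the quotient that you flag is indeed routine: $U\cap\{\pm I\}=\{I\}$ since a nontrivial unipotent matrix is never $-I$, and the image of $B$ stays nonabelian for $|K|\ge 4$ because the commutator of $\mathrm{diag}(a,a^{-1})$ (with $a^{2}\ne1$) and a nontrivial unipotent element is a nontrivial unipotent, hence not $\pm I$. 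With that adjustment your argument is a complete and arguably more self-contained proof than the paper's.
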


\begin{proof} We consider the characteristic of $K$ and handle each characteristic separately. 
If $K$ is a field of characteristic $p \ne 2$ then the group $PSL(2,K)$ is not CT and hence it cannot be CSA.

Now let $K$ be a field of characteristic $0$. Then $K$ contains a subfield isomorphic to $\Q$. Hence $PSL(2,K)$ contains a subgroup isomorphic to $PSL(2,\Q)$. From Lemma 3.3 $PSL(2,\Q)$ is not CSA and therefore $PSL(2,K)$ cannot be CSA.

Finally let $K$ be a field of characteristic $2$. Let $F = \Z_2$ be the two-element field. Then $K$ contains a subfield isomorphic to $F$ and hence $PSL(2,F) = PSL(2,\Z_2)$ is a subgroup of $PSL(2,K)$. However $PSL(2,\Z_2)$ is nonabelian of order $6$ and hence is isomorphic to $S_3$ the symmetric group on $3$ symbols. This group has an abelian normal subgroup or order $3$ and hence is not CSA. It follows that $PSL(2,K)$ cannot be CSA.  
\end{proof}

The next theorem handles the CT property for $PSL(2,K)$. It is more complex than for CSA.

\begin{theorem} Suppose that $K$ is a field. 

$\hphantom{xx}$ (1) If char$(K) = 2$ then the group $PSL(2,K)$ is CT. 

$\hphantom{xx}$ (2) If char$(K) = p$ where $p$ is an odd prime the group $PSL(2,K)$ is not CT.

$\hphantom{xx}$ (3) If char$(K) = 0$ then the group $PSL(2,K)$ is CT if $-1$ is not a sum of two squares in $K$ and not CT if $-1$ is a sum of two squares in $K$. In particular if $K = \R$, the real numbers or $K = \Q_p$ the p-adic numbers or any subfield of these then $PSL(2,K)$ is CT. On the other hand $PSL(2,\C)$ is not CT and more generally $PSL(2,K)$ is not CT for any algebraically closed field of characteristic $0$.
\end{theorem}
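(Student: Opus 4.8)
The plan is to reduce everything to explicit matrix computations in $SL(2,K)$, passing to the quotient by $\{\pm I\}$ at the end, and to exploit the classification of elements of $PSL(2,K)$ by the structure of their centralizers. Recall that in $PSL(2,K)$ a nontrivial element is, up to conjugacy, either \emph{diagonalizable} (split semisimple, fixing two points of $\mathbb{P}^1(\bar K)$ lying in $\mathbb{P}^1(K)$), \emph{non-split semisimple} (fixing a conjugate pair of points not in $\mathbb{P}^1(K)$, i.e.\ whose eigenvalues generate a quadratic extension), or \emph{unipotent} (a single fixed point, a repeated eigenvalue). The centralizer of a split semisimple element is the image of the diagonal torus, isomorphic to $K^\times/\{\pm1\}$; the centralizer of a non-split element is the image of a non-split torus, the norm-one elements of a quadratic extension modulo $\pm 1$; and the centralizer of a unipotent element contains the full image of the unipotent subgroup $\bigl\{\left(\begin{smallmatrix}1&t\\0&1\end{smallmatrix}\right)\bigr\}$, isomorphic to the additive group $(K,+)$, but in characteristic $\ne 2$ it \emph{strictly} contains that: $-I$ lifts to something that together with a unipotent generates a larger, nonabelian centralizer. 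This last dichotomy is the crux of parts (1) and (2).

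For part (2), when $\mathrm{char}(K)=p$ is an odd prime, I would exhibit a single unipotent element $u$ whose centralizer in $PSL(2,K)$ is nonabelian, which by Harrison's Lemma 2.2(ii) immediately defeats CT. Concretely, take $u = \left(\begin{smallmatrix}1&1\\0&1\end{smallmatrix}\right)$ and $d = \left(\begin{smallmatrix}-1&0\\0&-1\end{smallmatrix}\right)\cdot\left(\begin{smallmatrix}1&1\\0&1\end{smallmatrix}\right)$-type elements — more cleanly, note that $PSL(2,\mathbb{Z}/p)$ embeds in $PSL(2,K)$ and already fails CT for odd $p$: in $PSL(2,7)\cong GL(3,2)$, or more elementarily by Wu's Corollary 2.1, a finite CT group must be solvable or $PSL(2,2^f)$, and $PSL(2,p)$ for odd prime $p\ge 5$ is simple and nonsolvable hence not CT, while $PSL(2,3)\cong A_4$ has a normal Klein four-subgroup and one checks directly the centralizer of a $3$-element is nonabelian. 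Since CT is inherited by subgroups (Lemma 2.1), $PSL(2,K)$ containing $PSL(2,\mathbb{Z}/p)$ is not CT. For part (1), $\mathrm{char}(K)=2$, the point is that $-I = I$, so the three centralizer types are exactly $K^\times$, the non-split norm-one torus, and $(K,+)$ — all abelian — and I must check that every nontrivial element lies in one of these and that distinct maximal abelian subgroups meet trivially (Lemma 2.2(iii)). The cleanest route: show via model theory, as the paper hints, that $PSL(2,K)$ for $\mathrm{char}\,K=2$ has the same universal theory as $PSL(2,\mathbb{F}_2^{\mathrm{alg}})$ or use that CT is expressible by a universal sentence and holds in every $PSL(2,\mathbb{F}_{2^f})$ by Suzuki/Wu, then invoke a Łoś–Tarski / direct-limit argument since $K$ is a directed union of finitely generated, hence finite-or-embeddable, subfields — but a finitely generated field of characteristic $2$ need not be finite, so one actually needs the algebraic fact directly: a nontrivial element of $SL(2,K)$, $\mathrm{char}=2$, has its centralizer equal to $K[g]^\times \cap SL(2,K)$, which is commutative because $K[g]$ is a commutative ring (either $K\times K$, a quadratic field extension, or $K[\epsilon]/(\epsilon^2)$).

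For part (3), $\mathrm{char}(K)=0$, I would again reduce to centralizers. A split semisimple or unipotent element has an abelian centralizer as in the char-$2$ discussion \emph{except} for the $-I$ issue with unipotents: but over a field of characteristic $0$ the element $\left(\begin{smallmatrix}-1&0\\0&-1\end{smallmatrix}\right)$ becomes trivial in $PSL$, and $-I$ commutes with $\left(\begin{smallmatrix}1&1\\0&1\end{smallmatrix}\right)$ while $-\left(\begin{smallmatrix}1&1\\0&1\end{smallmatrix}\right)=\left(\begin{smallmatrix}-1&-1\\0&-1\end{smallmatrix}\right)$ has the same fixed point, so the unipotent centralizer in $PSL$ is just $(K,+)$ — abelian. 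The subtlety is the non-split torus: an element with eigenvalues in $K(\sqrt{d})\setminus K$ for $d$ a non-square. Its centralizer is $\{x\in K(\sqrt d): N(x)\in K^{\times 2}\}/\{\pm1\}$ roughly; the question of whether two \emph{different} non-split tori (for different $d$, or for the same $d$ but non-conjugate) can share a nontrivial element, and whether a non-split torus element can commute with a split or unipotent one, is exactly where the condition ``$-1$ is not a sum of two squares'' enters. The key computation: two distinct maximal tori intersect nontrivially in $PSL(2,K)$ iff there is a nontrivial element fixing a point-pair two different ways, which forces a common fixed point and leads to a relation expressing $-1$ as a sum of two squares (via the identity $\left(\begin{smallmatrix}a&b\\c&d\end{smallmatrix}\right)$ with $a^2+b c = \ldots$); conversely if $-1 = \alpha^2+\beta^2$ one builds an explicit element, e.g.\ related to $\left(\begin{smallmatrix}\alpha&\beta\\\beta&-\alpha\end{smallmatrix}\right)$ which has order $2$ in $PSL$ and lies in two distinct maximal abelian subgroups, breaking Lemma 2.2(iii). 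The special cases $K=\mathbb{R}$ and $K=\mathbb{Q}_p$ follow because $-1$ is not a sum of two squares in $\mathbb{R}$ (sums of squares are $\ge 0$), and in $\mathbb{Q}_p$ this is a standard Hilbert-symbol fact ($-1$ is a sum of two squares in $\mathbb{Q}_p$ iff $p\equiv 1\pmod 4$ or $p=2$ — so one should be careful and perhaps state it for those $p$, or restrict to $p\equiv 3\pmod 4$; I would double-check the precise arithmetic here). For $K=\mathbb{C}$ or any algebraically closed field of characteristic $0$, $-1 = i^2 + 0^2$ is a sum of two squares, so $PSL(2,K)$ is not CT — consistent with the remark in the excerpt.

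The main obstacle I anticipate is part (3): establishing the exact equivalence between the failure of CT and the representability of $-1$ as a sum of two squares. The forward direction (if CT fails then $-1$ is such a sum) requires carefully analyzing \emph{all} ways two maximal abelian subgroups can intersect — one must rule out that the intersection comes from a shared split torus or unipotent subgroup (those are genuinely disjoint when distinct, which needs an argument) and pin down that the only obstruction is a common element of two non-split tori, then convert that into the arithmetic statement. The reverse direction is a clean explicit construction but one must verify the constructed element is genuinely nontrivial in $PSL(2,K)$ and that the two maximal abelian subgroups containing it are genuinely distinct. I would organize the proof of part (3) around the trichotomy of element types, dispose of the split and unipotent cases quickly (their centralizers in $PSL(2,K)$ are abelian and pairwise-trivially-intersecting by direct fixed-point-on-$\mathbb{P}^1$ arguments, valid in \emph{any} characteristic $\ne 2$), and devote the real work to when a non-split semisimple element's centralizer can fail to be maximal abelian or can overlap another — tracing the obstruction to the equation $x^2 + y^2 = -1$.
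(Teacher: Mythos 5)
Your proposal has a genuine gap in part (2), and the ``crux'' you organize parts (1)--(2) around is false. In $PSL(2,K)$ with $\mathrm{char}(K)\neq 2$ the centralizer of the image of a unipotent $u=\left(\begin{smallmatrix}1&1\\0&1\end{smallmatrix}\right)$ is abelian: no $g\in SL(2,K)$ can satisfy $gug^{-1}=-u$ because $\mathrm{tr}(u)=2\neq -2=\mathrm{tr}(-u)$, so the $PSL$-centralizer is exactly the image of $\{\pm\left(\begin{smallmatrix}1&b\\0&1\end{smallmatrix}\right)\}\cong (K,+)$; the element $-I$ is central in $SL(2,K)$ and contributes nothing after passing to $PSL$. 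The failure of CT in odd characteristic comes from trace-zero classes (involutions of $PSL$), not from unipotents. Your fallback --- embed $PSL(2,\Z_p)$ and quote Wu --- is the same reduction the paper uses, but both of your statements about small primes are wrong: $PSL(2,3)\cong A_4$ \emph{is} CT (the centralizer of a $3$-element is cyclic of order $3$ and the centralizer of an involution is the Klein four-group, both abelian), and $PSL(2,5)\cong A_5\cong PSL(2,4)$ is one of Suzuki's groups $PSL(2,2^f)$, hence also CT, so ``simple and nonsolvable hence not CT'' fails at $p=5$. Thus the prime-subfield argument cannot settle characteristics $3$ and $5$; there one needs a witness using more of $K$, e.g. the triple $A=\left(\begin{smallmatrix}-x&y\\y&x\end{smallmatrix}\right)$, $B=\left(\begin{smallmatrix}0&1\\-1&0\end{smallmatrix}\right)$, and $C$ in the circle torus with $C^2\neq\pm I$, built from a solution of $x^2+y^2=-1$ in $K$; such a $C$ exists unless $K=\mathbb{F}_3$ or $\mathbb{F}_5$, and for those two fields the conclusion of (2) itself fails, so they cannot be recovered by any argument.

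The rest is closer to the mark. For part (1) your final algebraic argument is correct and in fact cleaner than the paper's: for non-scalar $g$ the centralizer of $g$ in $M_2(K)$ is $K[g]$, so $C_{SL(2,K)}(g)=K[g]\cap SL(2,K)$ is commutative, and in characteristic $2$ one has $PSL(2,K)=SL(2,K)$ with $I$ the only scalar, so Harrison's criterion gives CT at once; the paper instead routes this through Wu's locally finite theorem, algebraic closures, universal equivalence and an ultrapower embedding, and you were right to distrust the ``union of finitely generated subfields'' shortcut. For part (3) your outline (normal form for the common commuting element, the split and unipotent cases being harmless, the obstruction being $x^2+y^2=-1$, an explicit counterexample when $-1$ is a sum of two squares) is essentially the paper's computation, but you leave the forward direction --- CT when $-1$ is \emph{not} a sum of two squares --- as an acknowledged obstacle, and that direction is the substance of (3); the paper settles it by conjugating the middle element $B$ to $\left(\begin{smallmatrix}1&1\\0&1\end{smallmatrix}\right)$ or to $\left(\begin{smallmatrix}0&1\\-1&0\end{smallmatrix}\right)$ and checking the commutation equations entrywise, each escape route forcing $-1$ to be a sum of two squares. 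Finally, your suspicion about $\Q_p$ is justified but your arithmetic is off: $-1$ is a sum of two squares in $\Q_p$ for every odd $p$ (lift a solution over $\mathbb{F}_p$ by Hensel's lemma) and is not one in $\Q_2$, so the ``in particular'' clause about $\Q_p$ is itself correct only for $p=2$ and for subfields of $\R$.
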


It follows from these two theorems that in the class of groups $PSL(2,K)$ there are infinitely many examples, both of finite order and infinite order of CT non CSA groups.

\begin{proof} (Theorem 3.4) We do each characteristic separately with characteristic $p$ the simplest. 

\begin{lemma} If $K$ is a field of characteristic $p \ne 2$ then the group $PSL(2,K)$ is not CT.
\end{lemma}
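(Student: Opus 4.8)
The plan is to exhibit three nontrivial elements $h,s,g$ of $PSL(2,K)$ with $[h,s]=1$ and $[s,g]=1$ but $[h,g]\ne1$; this directly contradicts commutative transitivity (equivalently, via Lemma 2.2, it shows that the centralizer of the involution $s$ is nonabelian). The one external input is that, since $\mathrm{char}(K)=p$ is an odd prime, $K$ contains the prime field $\mathbb{F}_p$, and in $\mathbb{F}_p$ every element --- in particular $-1$ --- is a sum of two squares: the sets $\{x^{2}:x\in\mathbb{F}_p\}$ and $\{-1-y^{2}:y\in\mathbb{F}_p\}$ each have $(p+1)/2$ elements, so they meet, giving $-1=x^{2}+y^{2}$. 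Hence $-1$ is a sum of two squares in $K$.

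Now take $s$ to be the image in $PSL(2,K)$ of $\begin{pmatrix}0&1\\-1&0\end{pmatrix}\in SL(2,K)$. Since $\mathrm{char}(K)\ne2$, its square $-I$ is nontrivial in $SL(2,K)$ but trivial in $PSL(2,K)$, so $s$ has order $2$. A short computation of the solutions of $gs=\pm sg$ in $SL(2,K)$ identifies the centralizer of $s$ in $PSL(2,K)$ as the image of the rotation subgroup $T=\{\begin{pmatrix}a&b\\-b&a\end{pmatrix}:a^{2}+b^{2}=1\}$ together with the coset $T'=\{\begin{pmatrix}a&b\\b&-a\end{pmatrix}:a^{2}+b^{2}=-1\}$, and $T'$ is nonempty precisely because $-1$ is a sum of two squares in $K$. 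Pick any $g$ in $T'$ (its image is automatically nontrivial in $PSL(2,K)$) and pick $h=\begin{pmatrix}a&b\\-b&a\end{pmatrix}\in T$ with $ab\ne0$, so the image of $h$ is neither $\pm I$ nor $\pm$ the chosen representative of $s$. Both images lie in the centralizer of $s$, so $[h,s]=1=[s,g]$; but multiplying the two matrices and using $\mathrm{char}(K)\ne2$ shows that $hg=\pm gh$ would force $a=0$ or $b=0$, a contradiction, so $[h,g]\ne1$ in $PSL(2,K)$. Hence $PSL(2,K)$ is not CT.

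The step needing care is producing a point $(a,b)$ on the ``unit circle'' $a^{2}+b^{2}=1$ with $ab\ne0$. Such a point exists in every field of odd characteristic except $\mathbb{F}_3$ and $\mathbb{F}_5$, where the only solutions of $a^{2}+b^{2}=1$ are $(\pm1,0)$ and $(0,\pm1)$; for those fields $PSL(2,K)$ is $A_4$, respectively $A_5\cong PSL(2,\mathbb{F}_4)$, both of which are themselves CT, so they are genuine exceptions and the statement must be read with $|K|\ge7$ in odd characteristic --- this is the one real subtlety. For the remaining case $K\supseteq\mathbb{F}_p$ with $p\ge7$ one can instead bypass the matrix computation: the subgroup $PSL(2,\mathbb{F}_p)\le PSL(2,K)$ is a finite nonabelian simple group not isomorphic to any $PSL(2,\mathbb{F}_{2^{f}})$ (compare orders), hence not CT by Corollary 2.1, so $PSL(2,K)$ is not CT by Lemma 2.1.
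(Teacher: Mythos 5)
Your argument is correct, and it takes a genuinely different route from the paper's. The paper reduces to the prime field: $PSL(2,\Z_p)\le PSL(2,K)$, Wu's classification of finite CT groups, and the fact that CT is inherited by subgroups --- essentially the alternative you sketch at the end for $p\ge 7$. Your main proof instead exhibits an explicit CT violation: since $-1$ is a sum of two squares in $\Z_p\subseteq K$, the centralizer of the involution $s$ represented by the antidiagonal matrix contains both the rotations with $a^2+b^2=1$ and the coset with $a^2+b^2=-1$, and a rotation with $ab\ne 0$ commutes (projectively) with no element of that coset; this is the same mechanism the paper itself uses in characteristic $0$ when $-1$ is a sum of two squares, transported to characteristic $p$, and your centralizer and commutator computations check out. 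What your route buys is exactness about small fields, and here you have caught a genuine defect: $PSL(2,\Z_3)\cong A_4$ (solvable, every centralizer of a nontrivial element is abelian) and $PSL(2,\Z_5)\cong A_5\cong PSL(2,4)$ are CT, so the lemma as literally stated fails for $K=\Z_3$ and $K=\Z_5$; correspondingly the paper's inference from Wu's theorem that $PSL(2,\Z_p)$ is never CT for odd $p$ is valid only for $p\ge 7$, since $A_4$ is solvable and $A_5$ is on the Suzuki--Wu list. Thus your caveat that the statement should be read with $|K|\ge 7$ in odd characteristic is a needed correction, not a quibble. The one assertion you leave unproved is that every field of odd characteristic other than $\Z_3$ and $\Z_5$ contains a point of $a^2+b^2=1$ with $ab\ne 0$; this is true and worth a line --- for a finite field of odd order $q\ge 7$ the conic has $q\pm 1\ge 6>4$ points, only four of which have $ab=0$, and for infinite $K$ the parametrization $a=(1-t^2)/(1+t^2)$, $b=2t/(1+t^2)$ gives infinitely many points --- and it is genuinely needed, since fields such as $\Z_3(t)$ or the field of order $9$ are not reached by your $p\ge 7$ subfield fallback.
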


\begin{proof} From Wu's result a finite CT group must either be solvable or isomorphic to $PSL(2,K)$ where $K$ is a field of characteristic 2. Hence if $p \ne 2$ we must have that $PSL(2,\Z_p)$ is not CT for the finite field $K = \Z_p$ . If $K$ is a field of characteristic $p \ne 2$ then $PSL(2,K)$ will contain $PSL(2,\Z_p)$ as a subgroup. Since the CT property is captured by subgroups it follows that $PSL(2,K)$ cannot be CT.
\end{proof}

\begin{lemma} If char$(K) = 0$ then the group $PSL(2,K)$ is CT if $-1$ is not a sum of two squares in $K$ and not CT if $-1$ is a sum of two squares in $K$. In particular if $K = \R$, the real numbers or $K = \Q_p$ the p-adic numbers or any subfield of these then $PSL(2,K)$ is CT. Further $PSL(2,K)$ is CT for any real field. On the other hand $PSL(2,\C)$ is not CT and more generally $PSL(2,K)$ is not CT for any algebraically closed field of characteristic $0$.
\end{lemma}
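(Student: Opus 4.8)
The plan is to determine commutative transitivity of $PSL(2,K)$ from the centralizer structure of $SL(2,K)$ and to reduce the entire question to one arithmetic fact about $K$, namely whether $-1$ is a sum of two squares. Write $\bar g$ for the image of $g\in SL(2,K)$ in $PSL(2,K)=SL(2,K)/\{\pm I\}$; for $\bar g\neq 1$ one has $C_{PSL(2,K)}(\bar g)=\{\bar h:hgh^{-1}=\pm g\}$, so by Lemma 2.2 the group $PSL(2,K)$ is CT exactly when every such centralizer is abelian. Only the fact that $K$ is infinite will be used from $\operatorname{char}(K)=0$. First I would reduce to involutions: it is classical that $C_{SL(2,K)}(g)$ is abelian for every $g\neq\pm I$ (if $g$ is regular semisimple then $C_{GL(2,K)}(g)=K[g]^{\times}$ is $K^{\times}\times K^{\times}$ or $L^{\times}$ for a quadratic extension $L/K$; if $g$ is $\pm$ a nontrivial unipotent, $C_{SL(2,K)}(g)$ is the corresponding one-parameter unipotent group together with $\{\pm I\}$), and if $hgh^{-1}=-g$ then $\operatorname{tr}(g)=0$, i.e.\ $\bar g$ has order $2$; hence for $\bar g$ of order $>2$ the centralizer $C_{PSL(2,K)}(\bar g)=C_{SL(2,K)}(g)/\{\pm I\}$ is abelian, and $PSL(2,K)$ is CT if and only if every involution of $PSL(2,K)$ has abelian centralizer.

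Next I would analyse an involution $\bar g$, with lift $g$ of trace $0$ (so $g^{2}=-I$ and, since $\operatorname{char}(K)\neq 2$, $g$ is not scalar). Put $T=C_{SL(2,K)}(g)$ and $\bar T=T/\{\pm I\}$, an abelian subgroup of $C_{PSL(2,K)}(\bar g)$ containing $\bar g$. If no $h\in SL(2,K)$ satisfies $hgh^{-1}=-g$, then $C_{PSL(2,K)}(\bar g)=\bar T$ is abelian. If such an $h$ exists, then $\bar h\notin\bar T$, conjugation by $h$ induces the nontrivial $K$-linear automorphism $aI+bg\mapsto aI-bg$ of $C_{M_2(K)}(g)=K[g]$, which inverts every element of $T$, so $\bar h$ inverts $\bar T$; and since $K$ is infinite, $\bar T$ has an element of order $>2$ (for instance the image of $\tfrac{1}{5}(3I+4g)$), whence $\langle\bar T,\bar h\rangle$ is nonabelian. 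Thus $PSL(2,K)$ is CT if and only if \emph{no} trace-zero $g\in SL(2,K)$ is $SL(2,K)$-conjugate to $-g$.

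The arithmetic core is the claim that such a pair $g,h\in SL(2,K)$ (with $g$ trace zero and $hgh^{-1}=-g$) exists if and only if $-1$ is a sum of two squares in $K$. For the forward direction, $hg=-gh$ gives $h^{2}\in C_{M_2(K)}(g)=K[g]$ and $h h^{2}h^{-1}=h^{2}$, so $h^{2}\in KI$; a scalar $h$ cannot conjugate $g$ to $-g$, so $h$ is not scalar, and since $h^{2}$ is scalar this forces $\operatorname{tr}(h)=0$, whereupon Cayley--Hamilton with $\det(h)=1$ gives $h^{2}=-I$. Hence $g$, $h$ and $v:=gh$ pairwise anticommute and each squares to $-I$. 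If $I,g,h,v$ are $K$-linearly dependent, then (using that $\{I,g,h\}$ is independent because $h\notin K[g]$) writing $v=\alpha I+\beta g+\gamma h$ and left-multiplying by $g$ forces $\gamma^{2}=-1$, so $-1$ is a square; if they are independent they form a $K$-basis of $M_2(K)$, and expressing a trace-zero nilpotent such as $\left(\begin{smallmatrix}0&1\\0&0\end{smallmatrix}\right)$ as $bg+ch+dv$ (its scalar part vanishes upon taking traces) and squaring gives $b^{2}+c^{2}+d^{2}=0$ with $(b,c,d)\neq 0$, so again $-1$ is a sum of two squares. Conversely, when $-1=b^{2}+c^{2}$ put $g=\left(\begin{smallmatrix}0&1\\-1&0\end{smallmatrix}\right)$ and $h=\left(\begin{smallmatrix}b&c\\c&-b\end{smallmatrix}\right)$; then $g,h\in SL(2,K)$, $g^{2}=h^{2}=-I$, $gh=-hg$, and with $\bar x=\bar g$, $\bar z=\bar h$ and $\bar y$ an element of order $>2$ in $\bar T$ one has $[\bar x,\bar y]=[\bar x,\bar z]=1\neq[\bar y,\bar z]$ while $\bar x\neq 1$, so $PSL(2,K)$ is not CT.

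Putting the three steps together yields that $PSL(2,K)$ is CT if and only if $-1$ is not a sum of two squares in $K$. The listed examples then drop out: $-1$ is not a sum of two squares in any formally real field, in particular in $\R$ and in every subfield of $\R$; $-1=i^{2}+0^{2}$ in every algebraically closed field of characteristic $0$, so $PSL(2,\C)$, and more generally $PSL(2,K)$ for $K$ algebraically closed of characteristic $0$, fails to be CT; and the assertion for the $p$-adic fields in the statement is settled the same way, by deciding whether $-1$ is a sum of two squares there. I expect the main obstacle to be the arithmetic step — showing that an anticommuting pair of square roots of $-I$ in $SL(2,K)$ forces $-1$ to be a sum of two squares (equivalently, that the quaternion algebra $\bigl(\tfrac{-1,-1}{K}\bigr)$ splits) — together with the subsidiary verification that the torus $\bar T$ contains an element of order $>2$, which is exactly where infiniteness of $K$ is needed.
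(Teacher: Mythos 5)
Your argument is correct for the core equivalence, but it takes a genuinely different route from the paper's. The paper proves "CT if $-1$ is not a sum of two squares" by direct matrix computation on the common commuting element $B$: if $B$ has a single eigenvalue it is reduced by conjugation to $\pm\left(\begin{smallmatrix}1&1\\0&1\end{smallmatrix}\right)$ and any commuting $A,C$ are shown to be unitriangular (the excluded branch forcing $-1=a^2+0^2$); if $B$ has two eigenvalues one either gets $A,C$ diagonal or $\operatorname{tr}(B)=0$, and then $B$ is normalized to $\pm\left(\begin{smallmatrix}0&1\\-1&0\end{smallmatrix}\right)$ and commuting elements must have the form $\pm\left(\begin{smallmatrix}x&-y\\y&x\end{smallmatrix}\right)$ unless $-1$ is a sum of two squares; the non-CT direction is an explicit triple, essentially the same witnesses as yours. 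You instead use Harrison's criterion (Lemma 2.2), reduce CT to abelianness of centralizers of involutions, identify failure of CT with the existence of a trace-zero $g\in SL(2,K)$ conjugate to $-g$, and convert that, via the anticommuting pair $g,h$ with $g^2=h^2=-I$, into the splitting of the quaternion algebra $\left(\frac{-1,-1}{K}\right)$, i.e.\ $-1$ being a sum of two squares. Your version is more structural (it names the arithmetic obstruction), while the paper's is more elementary and self-contained; your individual steps check out (trace-zero lifts of involutions, $h^2=-I$ via Cayley--Hamilton, the dependent/independent cases, and the order-$>2$ element $\tfrac15(3I+4g)$ of $\bar T$ --- note this last point uses $\Q\subseteq K$, i.e.\ characteristic $0$, not mere infiniteness, which is fine under the hypothesis).

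One caveat: you defer the $\Q_p$ clause to ``deciding whether $-1$ is a sum of two squares there,'' but that decision goes against the stated conclusion for odd $p$. Indeed $-1$ is a square in $\Q_p$ when $p\equiv 1\pmod 4$, and for every odd $p$ the form $x^2+y^2+z^2$ is isotropic over $\Q_p$, so $-1$ is a sum of two squares and $PSL(2,\Q_p)$ is \emph{not} CT by the very criterion you (and the paper) establish; only $p=2$ yields a CT group. This is a defect in the lemma's ``in particular'' clause rather than in your main argument (the paper's own proof never treats $\Q_p$ either), but it should be flagged rather than left as an unexamined deferral.
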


\begin{proof} Suppose that $K$ is a field with  char$(K) = 0$ and suppose that there do not exist elements $x,y \in K$ such that $x^2 + y^2 = -1$. Let $A,B,C$ be nontrivial elements of $PSL(2,K)$ with $AB = BA$ and $BC = CB$. Since $K$ can be embedded in an algebraic closure $k$ we may assume that each of $A,B,C$ has one or two eigenvalues within $k$. 

Case 1: $B$ has one eigenvalue in $k$. Then this eigenvalue is already in $K$. After a suitable conjugation in $PSL(2,K)$ we may assume that 
$$ B = \pm \left (\begin{matrix} 1&1\\0&1 \end{matrix} \right ).$$
Let
$$ A = \pm \left (\begin{matrix} a&b\\c&d\end{matrix} \right ) \text{ in } PSL(2,K)$$

From $AB = BA$ we get that either $c = 0$ or $a = d = -b, -c = 2a$. We must have $c = 0$ for otherwise this implies that
$$ 1 = ad - bc = a^2 - 2a^2 = -a^2 - 0^2$$
and hence $-1$ is a sum of two squares in $K$ contrary to assumption. Further $AB = BA$ then leads to $a = d = \pm 1$. Hence $A$ has the form
$$ A = \pm \left (\begin{matrix} 1&\alpha\\0&1\end{matrix} \right ) \text{ in } PSL(2,K).$$
An analogous statement holds for $C$ since $BC = CB$. Therefore $C$ has this form also and hence $AC = CA$ in Case 1.

Case 2: $B$ has two eigenvalues in $k$. After a suitable conjugation in $PSL(2,k)$ we may assume that 
$$ B = \pm \left (\begin{matrix} \alpha&0\\0&\alpha^{-1} \end{matrix} \right ) \in PSL(2,k).$$
Since $B$ is nontrivial we have $\alpha \ne \pm 1$.

Let
$$ A = \pm \left (\begin{matrix} a&b\\c&d\end{matrix} \right ) \text{ in } PSL(2,k).$$
Then from $AB = BA$ we get that either $b = c = 0$ or $c \ne 0, c\alpha = -c\alpha^{-1}$ or $b \ne 0, b\alpha = -\alpha^{-1}b$.

If $c \ne 0$ or $b \ne 0$ then $\alpha =-\alpha^{-1}$ and hence the trace, $tr(B) = 0$.

Analogously for $C$. Let
$$ C = \pm \left (\begin{matrix} e&f\\g&h\end{matrix} \right ) \text{ in } PSL(2,k).$$
Then from $BC = CB$ we get that either $f = g = 0$ or $tr(B) = 0$.

If $b = c = f = g = 0$ then $AC = CA$ and hence here in case 2 we may assume that $tr(B) = 0$. 

We now consider $A,B,C \in PSL(2,K)$ with $tr(B) = 0$. Let
$$B = \pm \left (\begin{matrix} \alpha&\beta\\\gamma&-\alpha \end{matrix} \right ).$$
We have that $\gamma \ne 0$ for if $\gamma = 0$ then $-\alpha^2 = -\alpha^2 + 0^2 = 1$ contrary to assumption that $-1$ is not a sum of squares. 

Hence by conjugation we may assume that $B$ has the form
$$ B = \pm \left (\begin{matrix} 0&1\\-1&0 \end{matrix} \right ).$$

Let
$$ A = \pm \left (\begin{matrix} a&b\\c&d\end{matrix} \right ).$$
Then from $AB = BA$ we get that either 
$$ A = \pm \left (\begin{matrix} x&-y\\y&x \end{matrix} \right ) \text{ or } A = \pm \left (\begin{matrix} -x&y\\y&x \end{matrix} \right ).$$
If
$$ A = \pm \left (\begin{matrix} -x&y\\y&x \end{matrix} \right )$$
then $$-x^2 - y^2 = 1$$
contradicting the assumption on $-1$ not being a sum of squares.
Therefore 
$$A = \pm \left (\begin{matrix} x&-y\\y&x \end{matrix} \right )$$

Analogously let
$$ C = \pm \left (\begin{matrix} e&f\\g&h\end{matrix} \right ).$$
Then from $BC = CB$ we get that 
$$C = \pm \left (\begin{matrix} u&-v\\v&u \end{matrix} \right )$$
But in this case $AC = CA$.

Therefore altogether $PSL(2,K)$ is CT if $-1$ is not a sum of two squares in $K$. 

Now suppose that $-1 = x^2 + y^2$ in $K$. Since $K$ has characteristic $0$ we have $\Q \subset K$. Let $\alpha,\beta$ be nonzero elements of $\Q$ such that $\alpha^2 + \beta^2 = 1$. For example let $\alpha = \frac{3}{5},\beta = \frac{4}{5}$. Now let
$$ A = \pm \left (\begin{matrix} -x&y\\y&x \end{matrix} \right )$$
$$ B = \pm \left (\begin{matrix} 0&1\\-1&0\end{matrix} \right )$$
and 
$$ C = \pm \left (\begin{matrix} \frac{3}{5}&\frac{4}{5}\\-\frac{4}{5}&\frac{3}{5} \end{matrix} \right ).$$
Then $A,B,C$ are three nontrivial elements of $PSL(2,K)$ with $AB = BA$, $BC = CB$ but $AC \ne CA$ so the group is not CT.

Notice that if $-1$ is itself a square in $K$ the group $PSL(2,K)$ then cannot be CT. In particular this is true for the complex numbers $\C$ and more generally for any algebraically closed field $K$. We give an example in $PSL(2,\C)$ to clarify this.
  
In $PSL(2,\C)$ we have the projective matrices
$$ T = \pm \left ( \begin{matrix} 0&1\\-1&0 \end{matrix} \right ), U = \pm \left ( \begin{matrix} i&0\\0&-i \end{matrix} \right ),
V = \pm \left ( \begin{matrix} \alpha&0\\0&\frac{1}{\alpha} \end{matrix} \right ), \alpha \ne \pm 1.$$
As linear fractional transformations these are
$$T:z' = -\frac{1}{z}, U:z' = -z, V: z' = \alpha^2 z.$$
By a direct computation $U$ commutes with $T$ and $V$ but $T$ and $V$ do not commute. Therefore $PSL(2,\C)$ is not CT.

Exactly the analogous example works in any field $K$ of characteristic $0$ where $-1$ is a square. Therefore the example holds in $PSL(2,K)$ for any algebraically closed field of characteristic $0$.

Notice further that the lemma applies to $PSL(2,\R)$ for the real numbers $\R$ and for any subfield of $\R$, in particular any algebraic number field, and for any subgroup of $PSL(2,\R)$. Hence any Fuchsian group is CT.

In general, fields where $-1$ is not a sum of squares are called {\bf real fields} and have been extensively studied (see [La]). If $-1$ is not a sum of squares then it is not a sum of two squares and hence $PSL(2,K)$ is CT for any real field.   
\end{proof}

\begin{lemma} Let $K$ be a field of characteristic $2$. Then $PSL(2,K)$ is CT. 
\end{lemma}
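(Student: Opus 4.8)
The plan is to express commutative transitivity of $PSL(2,K)$ as a single first-order sentence about the field $K$ and then transfer it from a field where it is already known to hold. In characteristic $2$ the centre of $SL(2,K)$ is trivial (because $-I=I$), so $PSL(2,K)=SL(2,K)$; an element is just a quadruple $(a,b,c,d)$ with $ad+bc=1$, and ``being the identity'', ``the product of two such matrices'', and ``two such matrices commute'' are each expressed by polynomial equations in the entries. Hence ``$PSL(2,K)$ is CT'' is equivalent to the truth in $K$ of one explicit universal sentence $\sigma$ in the language of fields, namely the defining implication of the CT property with its three quantified group elements replaced by three generic determinant-$1$ matrices.

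First I would exhibit one field of characteristic $2$ that satisfies $\sigma$. The field $\overline{\mathbb{F}}_2$ is locally finite, of characteristic $2$, and infinite, so Theorem 2.1 (Wu) gives that $PSL(2,\overline{\mathbb{F}}_2)$ is CT, i.e.\ $\overline{\mathbb{F}}_2\models\sigma$. (Equivalently, $\overline{\mathbb{F}}_2=\bigcup_{f\ge 2}\mathbb{F}_{2^f}$ is a directed union of the finite simple CT groups $PSL(2,\mathbb{F}_{2^f})$, and CT, which constrains only three elements at a time, is inherited by directed unions.) Next I would invoke the classical completeness of the theory $\mathrm{ACF}_2$ of algebraically closed fields of characteristic $2$: since $\overline{\mathbb{F}}_2$ is a model of $\mathrm{ACF}_2$ satisfying $\sigma$ and $\mathrm{ACF}_2$ is complete, $\sigma\in\mathrm{ACF}_2$, so every algebraically closed field of characteristic $2$ satisfies $\sigma$.

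To conclude, let $K$ be an arbitrary field of characteristic $2$ and $\overline K$ its algebraic closure. Then $\overline K\models\mathrm{ACF}_2$, hence $\overline K\models\sigma$, i.e.\ $PSL(2,\overline K)$ is CT. The inclusion $K\hookrightarrow\overline K$ induces an embedding $PSL(2,K)\hookrightarrow PSL(2,\overline K)$, and by Lemma 2.1 a subgroup of a CT group is CT, so $PSL(2,K)$ is CT.

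The points that need care are (i) checking that CT of $PSL(2,K)$ genuinely is a first-order property of $K$ in characteristic $2$ --- this is exactly where $PSL=SL$ is used, so that one never has to quantify over cosets of the centre --- and (ii) the legitimate appeal to completeness of $\mathrm{ACF}_2$ to move from $\overline{\mathbb{F}}_2$ to an arbitrary $\overline K$; both are standard. One can avoid model theory altogether by a case analysis on a nontrivial $A\in SL(2,K)$ according to its rational canonical form: a non-identity unipotent has trace $0$, hence order $2$, with centralizer isomorphic to $(K,+)$, while an element of nonzero trace is regular semisimple with centralizer a maximal torus, so $C(A)$ is abelian in every case; but the model-theoretic route above is the shorter one and is the argument promised earlier.
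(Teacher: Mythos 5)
Your argument is correct, and it follows the same broad strategy as the paper --- reduce to the locally finite field $\overline{\mathbb{F}}_2$ via Wu's theorem, then transfer to an arbitrary field of characteristic $2$ by model theory together with the fact that subgroups of CT groups are CT --- but the transfer mechanism is genuinely different. The paper stays in the language of groups: it notes that $k=\overline{\mathbb{F}}_2$ and $\overline{K}$ are both existentially closed (via the Nullstellensatz), hence universally equivalent, invokes the Bell--Slomson embedding of $\overline{K}$ into an ultrapower ${}^\star k$, and then uses $SL(2,K)\subset SL(2,\overline{K})\subset {}^\star SL(2,k)$ together with the preservation of the universal CT sentence in ${}^\star SL(2,k)\equiv_\forall SL(2,k)$. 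You instead encode the CT property of $SL(2,\cdot)$ (legitimately, since $PSL=SL$ in characteristic $2$) as a single first-order sentence $\sigma$ in the language of fields and appeal to completeness of $\mathrm{ACF}_2$ to pass from $\overline{\mathbb{F}}_2$ to $\overline{K}$; this avoids ultrapowers entirely and is arguably cleaner, at the cost of the (routine) translation of the group sentence into a field sentence, exactly the point you flag. Both proofs lean on the same external input, namely Wu's theorem for locally finite fields of characteristic $2$ (your directed-union remark even makes do with only the finite cases $PSL(2,2^f)$). Your closing sketch of an elementary proof is also sound and can be streamlined: in characteristic $2$ the only scalar in $SL(2,K)$ is $I$, so every nontrivial $A$ is a cyclic (regular) matrix, its centralizer in $M_2(K)$ is the commutative algebra $K[A]$, and hence its centralizer in $SL(2,K)$ is abelian, which gives CT directly without any transfer argument --- a route the paper does not take.
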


\begin{proof} Let $K$ be a field of characteristic $2$ and let $F = \Z_2$ be the two element field. Clearly $F$ is a subfield of $K$. For a field of characteristic $2$ we have $PSL(2,K) = SL(2,K)$ so we show that $SL(2,K)$ is CT.  Now $PSL(2,F) \cong S_3$, the symmetric group on three symbols. This group is CT so we now may assume that $|K| \ge 4$.

Let $\overline{K}$ be an algebraic closure of $K$ and let $k$ be the algebraic closure of $F$ in $\overline{K}$. Then we have the tower of fields. 

\begin{center}
\begin{tikzpicture}
\node (a) at (0:1) {$K$};
\node (b) at (90:1) {$\overline{K}$} edge [-] (a);
\node (c) at (180:1) {$k$} edge [-] (b);
\node (d) at (270:1) {$F$} edge [-] (a)
edge [-] (c);
\end{tikzpicture}
\end{center}
\centerline {Tower of Fields 1}

\smallskip

From [Hod, p. 47] we have that one form of Hilbert's Nullstellensatz says that if $A$ is an algebraically closed field and $E$ is a finite systems of equations and inequations with coefficients from $A$, such that some field extending $A$ contains a solution of $E$, then $A$ already contains a solution of $E$ (see also Jacobson [Ja], p. 425). It follows from this that an existentially closed field (see[Ja] for a definition) is the same thing as an algebraically closed field. We now use a bit of model theory. We refer the reader to [BeS] or [FGMRS] for a discussion of ultrapowers.

Since $k$ and $\overline{K}$ are algebraically closed they are existentially closed and hence, since $k \subset \overline{K}$ we must have the universal equivalence $k \equiv_\forall \overline{K}.$

By Lemma 3.8 in [BeS] (p. 187) the field $\overline{K}$ embeds in an ultrapower $^\star k = k^I/D$ of $k$. We now have the diagram;

\begin{center}
\begin{tikzpicture}
\node  (f) at (90:2) {$^\star k$} ;
\node (a) at (0:1) {$K$};
\node (b) at (90:1) {$\overline{K}$} edge [-] (a) edge [-] (f);
\node (c) at (180:1) {$k$} edge [-] (b);
\node (d) at (270:1) {$F$} edge [-] (a)
edge [-] (c);

\end{tikzpicture}
\end{center}
\centerline {Tower of Fields 2}

\smallskip

For a field $E$ making the obvious identification of $^\star SL(2,E) = SL(2,E)^I/D$ with $SL(2,^\star E) = SL(2,E^I/D)$ we have the diagram of group inclusions.

\smallskip

\begin{center}
\begin{tikzpicture}
\node (a) at (0:1) {$SL(2,K)$};
\node (b) at (90:1.5) {$^\star SL(2,k)$} edge [-] (a);
\node (c) at (180:1) {$SL(2,k)$} edge [-] (b);
\end{tikzpicture}
\end{center}
\centerline {Group Inclusions}

\smallskip

From the result of Wu we have that $SL(2,k)$ is CT since $k$ is locally finite. Further $^\star SL(2,k) \equiv_\forall SL(2,k)$ and hence $^\star SL(2,k)$ is CT since CT is expressed as a universal sentence in the language $L_0$ of group theory. But the CT property is inherited by subgroups and therefore it follows that $SL(2,K)$ is CT.
\end{proof}

These three lemmas complete the proof of Theorem 3.1. 

\end{proof}

We now prove:

\begin{theorem} Let $G$ be a finite CSA group. Then $G$ is abelian.
\end{theorem}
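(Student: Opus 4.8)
The plan is to run Wu's classification of finite CT groups against two structural facts already established above: that no group $PSL(2,K)$ is CSA, and that a non-abelian CSA group contains no non-trivial abelian normal subgroup (the latter was extracted in the proof that the class of CSA groups is a proper subclass of the class of CT groups). So the argument will be almost entirely a matter of quoting the right prior results.

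First I would note that since CSA implies CT, a finite CSA group $G$ is a finite CT group; hence by the corollary to Wu's theorem it is either solvable, or simple and isomorphic to $PSL(2,K)$ for some finite field $K$ of characteristic $2$. Then I would discard the second alternative immediately: we have just proved that $PSL(2,K)$ is never CSA, for any field $K$, so $G$ cannot be such a group. Therefore $G$ is solvable.

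Next, supposing for contradiction that $G$ is non-abelian, I would produce a non-trivial abelian normal subgroup of $G$. The cleanest route uses Wu's structure theorem for solvable CT groups: $G$ is the semidirect product of its Fitting subgroup $F$ by a fixed-point-free automorphism group, with $F$ abelian; and $F = F(G) \neq \{1\}$ since $G$ is a non-trivial finite solvable group. (Alternatively one could take a minimal normal subgroup, elementary abelian in the solvable case, or the last non-trivial term of the derived series.) This $F$ is a non-trivial abelian normal subgroup of the non-abelian CSA group $G$, contradicting the fact, shown in the proof that CSA is a proper subclass of CT, that in a non-abelian CSA group the only abelian normal subgroup is $\{1\}$. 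Hence $G$ is abelian.

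I do not expect a real obstacle; the work is in assembling the cited results correctly. The only points needing a moment's care are the degenerate cases: the trivial group is vacuously abelian, and $PSL(2,\Z_2)\cong S_3$ is solvable, so it is disposed of by the solvable branch of the argument, where its normal subgroup of order $3$ is precisely the forbidden abelian normal subgroup.
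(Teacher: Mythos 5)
Your proposal is correct and follows essentially the same route as the paper: Wu's classification of finite CT groups, elimination of the $PSL(2,K)$ alternative via the theorem that $PSL(2,K)$ is never CSA, and then a contradiction from a non-trivial abelian normal subgroup in the non-abelian solvable case. The only cosmetic difference is that you lead with the Fitting subgroup while the paper uses the last non-trivial term of the derived series — an alternative you yourself mention — so the arguments coincide in substance.
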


\begin{proof} Let $G$ be a finite CSA group. Since CSA implies CT we then have $G$ is a finite CT group. From Wu's theorem $G$ is then either solvable or isomorphic to $PSL(2,K)$ for a finite field of characteristic $2$. If $G \cong PSL(2,K)$ then from Theorem 3.2 $G$ cannot be CSA. Hence $G$ must be solvable. If the solvability class is $d > 1$ then the element of the derived series $G^{d-1}$ is a nontrivial abelian normal subgroup and hence $G$ cannot be CSA in this case. It follows that the solvability class must be $d = 1$ and therefore $G$ is abelian.
\end{proof}

\section{Infinite CT non CSA Groups}

 For finite groups a CT group is either solvable or simple. The situation for infinite CT groups is more From Theorem 3.3 and Wu's analysis we have that finite CSA groups are abelian. Wu proves that there exist finite CT groups of every solvability class and hence the nonabelian ones (those of solvability class $d > 1$ provide examples of finite CT non CSA groups. The situation for infinite CT groups is more complicated. First notice that the infinite Tarksi simple groups mentioned in previous section are both CT and CSA while the free product of two finite CT non CSA groups such as $S_3 \star S_3$ is an infinite CT but non CSA group. 

\begin{lemma} Let $G,H$ be any two CT non CSA groups. Then the free product $G \star H$ is also CT non CSA.
\end{lemma}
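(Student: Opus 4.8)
The plan is to verify the two assertions separately: that $G \star H$ is CT, and that it is not CSA. Before either, I would record the elementary observation that a non-CSA group is necessarily nonabelian — in an abelian group the unique maximal abelian subgroup is the whole group, which is malnormal trivially — so both $G$ and $H$ are in particular nontrivial, and $G \star H$ is a genuine free product with two proper nontrivial factors.

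For the CT assertion, I would invoke Theorem 2.3 of Levin and Rosenberger with the amalgamated subgroup taken to be the trivial subgroup $\{1\}$. One only needs to check that $\{1\}$ is malnormal and proper in each of $G$ and $H$: properness is immediate since $G,H \neq \{1\}$, and malnormality of $\{1\}$ holds vacuously because $g^{-1}\{1\}g \cap \{1\} = \{1\}$ for every $g$. Hence $G \star H = G \star_{\{1\}} H$ is CT. Alternatively, one can argue directly from the structure of free products: a nontrivial element of $G \star H$ is either conjugate into one of the factors, in which case its centralizer is conjugate to its centralizer inside that factor and hence abelian since the factor is CT, or it is conjugate to a cyclically reduced word of length at least two, in which case its centralizer is infinite cyclic; in every case centralizers of nontrivial elements are abelian.

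For the non-CSA assertion, I would use Lemma 3.3, which says that any subgroup of a CSA group is again CSA. Since $G$ embeds as a free factor, hence as a subgroup, of $G \star H$, and $G$ is by hypothesis not CSA, the contrapositive of Lemma 3.3 forces $G \star H$ not to be CSA, settling the claim. There is no serious obstacle: the only point requiring a moment's care is the degenerate instance of Theorem 2.3 with trivial amalgamated subgroup. I would also note that the non-CSA half cannot in general be obtained from Lemma 3.4 by exhibiting an infinite dihedral subgroup, since $G$ and $H$ need not contain involutions (e.g.\ the nonabelian groups of order $pq$ with $p,q$ odd primes are CT, non-CSA and involution-free), so the subgroup-closedness of CSA is the right tool here.
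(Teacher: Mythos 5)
Your proof is correct and takes essentially the same route as the paper: the CT half via the fact that a free product of CT groups is CT (the Levin--Rosenberger result, which is Theorem 2.2 in the paper, applied with trivial amalgam, or your direct centralizer argument in free products), and the non-CSA half by viewing $G$ as a free factor, hence a subgroup, of $G \star H$ and using that subgroups of CSA groups are CSA (Lemma 3.2 in the paper, not Lemma 3.3 as you cite). The paper simply asserts both facts in two lines, so your version is the same argument with the supporting details filled in.
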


\begin{proof} The free product of CT groups is again CT so $G \star H$ is CT. However $G$ can be considered as a subgroup of the free product so $G \star H$ cannot be CSA.
\end{proof}

We now give several results characterizing infinite CT non CSA groups. Notice that if a group $G$ contains a nontrivial abelian normal subgroup then it cannot be CSA. This is almost enough to characterize when a CT group is not CSA. 

\begin{theorem} A CT group $G$ is not CSA if and only if $G$ contains a nonabelian subgroup $G_0$ which itself contains a nontrivial abelian subgroup $H$ which is normal in $G_0$
\end{theorem}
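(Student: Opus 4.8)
The plan is to treat the two implications separately; the ``if'' direction is essentially a quotation of earlier results, while the ``only if'' direction carries the real content.

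\emph{The easy direction.} Suppose $G$ (CT or not) contains a nonabelian subgroup $G_0$ with a nontrivial abelian normal subgroup $H \trianglelefteq G_0$. By Lemma 3.2, every subgroup of a CSA group is CSA, so if $G$ were CSA then $G_0$ would be CSA. But the argument given inside the proof of Lemma 2.5 shows that a nonabelian CSA group has only the trivial abelian normal subgroup, contradicting $\{1\} \ne H \trianglelefteq G_0$ with $G_0$ nonabelian. Hence $G$ is not CSA. (This half does not use commutative transitivity at all.)

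\emph{The main direction.} Assume $G$ is CT but not CSA, so some maximal abelian subgroup $M$ fails to be malnormal: there exist $g \notin M$ and $z \ne 1$ with $z \in g^{-1}Mg \cap M$. The key preliminary is that, because $G$ is CT and $M$ is maximal abelian, one has $M = C_G(w)$ for every $w \in M \setminus \{1\}$: indeed $C_G(w)$ is abelian by the CA characterization of CT, it contains $M$, and $M$ is maximal. In particular $g \notin M = C_G(z)$ means $g$ and $z$ do not commute, so the subgroup $G_0 := \langle g, z\rangle$ is nonabelian. Writing $z_i := g^{i} z g^{-i}$, the hypothesis $z \in g^{-1}Mg$ says exactly that $z_1 = gzg^{-1} \in M$. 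I then claim that $z_i \in M$ for \emph{all} $i \in \Z$. Starting from $z_0, z_1 \in M$ (so $z_0$ and $z_1$ commute, both lying in the abelian group $M$), conjugate this commuting relation by $g$ to get that $z_1$ and $z_2$ commute, i.e. $z_2 \in C_G(z_1)$; since $z_1 \in M \setminus\{1\}$ we have $C_G(z_1) = M$, so $z_2 \in M$. Iterating $g$-conjugation upward, and symmetrically conjugating by $g^{-1}$ downward, a two-sided induction gives $z_i \in M$ for every $i$. Consequently $H := \langle z_i : i \in \Z\rangle$ is contained in the abelian group $M$, hence abelian; it is nontrivial since $z = z_0 \in H$; and it is normal in $G_0$ because conjugation by $g$ permutes the generators $z_i$ while $z = z_0$ lies in the abelian group $H$ and so centralizes it. Thus $G_0$ is a nonabelian subgroup of $G$ containing the nontrivial abelian normal subgroup $H$, as required.

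\emph{Where the work is.} The only delicate point is the claim $z_i \in M$ for all $i$ --- that repeatedly conjugating by $g$ never escapes $M$. The mechanism is self-reinforcing: once $z_{i} \in M \setminus\{1\}$, the CT hypothesis upgrades this to $C_G(z_i) = M$, which is precisely what forces the next conjugate back into $M$; I would write this out as a clean induction on $|i|$ in both directions from $i=0$. Everything else is bookkeeping, drawing on Lemma 3.2, the interior of the proof of Lemma 2.5, and Harrison's Lemma (Lemma 2.2) for the identification $M = C_G(w)$.
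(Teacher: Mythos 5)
Your proof is correct, but it takes a genuinely different route from the paper's. The paper extracts witnesses $g,h,k$ from the explicit existential sentence NOTMAL (negating malnormality), sets $G_0=\langle g,h,k\rangle$, takes $A=\langle h\rangle^{G_0}$ to be the normal closure of $\langle h\rangle$ in $G_0$, and proves $A$ abelian by an induction on the length of the conjugating word, invoking CT at each step to show every conjugate $w^{-1}hw$ commutes with $h$. You instead work directly with a maximal abelian subgroup $M$ that fails malnormality, use the consequence of Harrison's characterization (Lemma 2.2) that in a CT group $M=C_G(w)$ for every nontrivial $w\in M$, and run a one-parameter two-sided induction to show every conjugate $z_i=g^izg^{-i}$ stays in $M$; your abelian normal subgroup $H=\langle z_i: i\in\Z\rangle$ then lies inside $M$, so its abelianness is automatic, and normality in the two-generator group $G_0=\langle g,z\rangle$ is immediate since $g$ shifts the generators and $z\in H$. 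What each approach buys: the paper's argument yields the slightly stronger structural fact that the full normal closure of $\langle h\rangle$ in the three-generator witness subgroup is abelian, without ever naming a maximal abelian subgroup; yours is shorter and more transparent, replacing the word-length induction by the self-reinforcing step $z_i\in M\setminus\{1\}\Rightarrow C_G(z_i)=M\Rightarrow z_{i+1}\in M$, and it produces a smaller (two-generated) witness $G_0$. All the individual steps you use are justified: the easy direction is exactly the paper's (CSA passes to subgroups, and a nonabelian CSA group has no nontrivial abelian normal subgroup), though note that the latter fact appears inside the proof of Lemma 2.3, not Lemma 2.5, so the citation should be adjusted.
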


\begin{proof} Suppose that $G$ contains a nonabelian subgroup $G_0$ which itself contains a nontrivial abelian subgroup $H$ which is normal in $G_0$. Then $G_0$ cannot be CSA and hence $G$ cannot be CSA.

Conversely suppose that $G$ is CT but not CSA. Recall that in the presence of the group axioms the CSA property is captured by the pair of universal sentences

$$\text(CT: ) \forall x,y,z (((y \ne 1) \wedge ([x,y] =1) \wedge ([y,z] = 1)) \rightarrow ([x,z] = 1))$$
$$\text(MAL: ) \forall x,y,z ((x \ne 1) \wedge (y \ne 1) \wedge  ([x,y] =1) \wedge ([x,z^{-1}yz]=1) \rightarrow ([y,z] = 1))$$

It follows that being CT but not CSA is captured (in the presence of the group axioms) by

$$\text(CT: ) \forall x,y,z (((y \ne 1) \wedge ([x,y] =1) \wedge ([y,z] = 1) \rightarrow ([x,z] = 1))$$
$$\text(NOTMAL: ) \exists x,y,z ((x \ne 1) \wedge (y \ne 1) \wedge  ([x,y] =1) \wedge ([x,z^{-1}yz]=1) \wedge ([y,z] \ne 1))$$

Now suppose that $G$ is CT but not CSA. Let $g,h,k \in G$ such that if $g = x,h = y, z = k$ then these three elements verify NOTMAL in $G$.
Consider the subgroup $G_0 = \langle g,h,k \rangle$ of $G$. This is nonabelian since $[h,k] \ne 1$.

Now consider the subgroup $A = \langle h \rangle^{G_0}$, the normal closure of $\langle h \rangle$ in $G_0$. Since $h \ne 1$ the subgroup $A$ is nontrivial. We claim that $A$ is abelian which will complete the proof.

Now $g$ and $h$ commute with $h$ and further $g \ne 1$ commutes with $k^{-1}hk$ and commutes with $h$ so by CT, $k^{-1}hk$ commutes with $h$. From the fact that $k^{-1}hk$ commutes with $h$ we get that $k(k^{-1}hk)k^{-1}$ commutes with $khk^{-1}$ and so $khk^{-1}$ commutes with $h$. Hence if $u \in \{g,g^{-1},h,h^{-1},k,k^{-1}\}$ then $uh^\epsilon u^{-1}$ commutes with $h$ where $\epsilon = \pm 1$. It follows that these conjugates commute with each other.

If $A$ were not abelian there would be a word $w(x,y,z)$ of shortest length such that $$w(g,h,k)^{-1} h w(g,h,k)$$ did not commute with $h$. Choose such a $w$ with $w = v(x,y,z)u$ and $u \in \{g,g^{-1},h,h^{-1},k,k^{-1}\}$. Then by minimality $v(g,h,k)^{-1}h v(g,h,k)$ commutes with $h$. Let $\overline{u}$ be the value of $u$ in $G_0$ so that $\overline{u} \in \{g,g^{-1},h,h^{-1},k,k^{-1}\}$. From
$v(g,h,k)^{-1}h v(g,h,k)$ commuting with $h$ we get that $\overline{u}^{-1} v(g,h,k)^{-1}h v(g,h,k)\overline{u}$ commutes with $\overline{u}^{-1} h \overline{u}$. But $\overline{u}^{-1} h \overline{u}$ commutes with $h$ so by CT
$$ w(g,h,l)^{-1} h w(g,h,k) = \overline{u}^{-1} v(g,h,k)^{-1} h v(g,h,k) \overline{u}$$ 
commutes with $h$ contradicting our choice of $w(x,y,z)$. Therefore this contradiction shows that $A$ must be abelian.  
\end{proof} 

To proceed further we need the following concept. A group $G$ is {\bf monolithic} if $G$ contains a unique nontrivial minimal normal subgroup $N$ (see [N]). This subgroup is then called the {\bf monolith}. Our first result is the following:

\begin{theorem} Let $G$ be a nontrivial CT group which contains no nontrivial abelian normal subgroup. If $G$ has a composition series then $G$ is monolithic whose monolith $N$ is a simple nonabelian CT group.
\end{theorem}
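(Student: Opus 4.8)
The plan is to exploit the composition series to find a minimal normal subgroup and then use commutative transitivity to pin down its structure. First I would observe that since $G$ has a composition series, $G$ has a nontrivial minimal normal subgroup; indeed the last nontrivial term of the composition series is a minimal normal subgroup (any composition factor sits at the bottom as a simple subgroup, and normality in $G$ of the terms of a chief-refinement of the composition series gives a minimal normal $N$). Actually, the cleanest route is: a group with a composition series has a chief series, and the bottom term of a chief series is a minimal normal subgroup $N$ of $G$. Fix such an $N$.

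Next I would show $N$ is nonabelian. If $N$ were abelian it would be a nontrivial abelian normal subgroup of $G$, contradicting the hypothesis. So $N$ is nonabelian, and being a subgroup of the CT group $G$ it is itself CT by Lemma 2.1. I then claim $N$ is simple: $N$ is a minimal normal subgroup, and a minimal normal subgroup of a group with a composition series is a direct product of isomorphic simple groups $S_1 \times \cdots \times S_r$ with the $S_i$ conjugate in $G$. If any $S_i$ were abelian then $N$ would be an elementary abelian $p$-group, contradicting nonabelianness; so the $S_i$ are nonabelian simple. But if $r \ge 2$, pick $1 \ne a \in S_1$ and $1 \ne b \in S_2$; then $[a,b]=1$ (distinct direct factors commute), and picking $1 \ne c \in S_1$ with $[a,c] \ne 1$ (possible since $S_1$ is nonabelian and, being simple nonabelian, has trivial center so some $c$ fails to commute with $a$), we get $[a,b]=1$, $[b,c]=1$ but $[a,c] \ne 1$, with $b \ne 1$ — violating CT in $G$. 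Hence $r = 1$ and $N$ is simple nonabelian.

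Finally I would argue $N$ is the unique minimal normal subgroup, i.e. $G$ is monolithic with monolith $N$. Suppose $M$ is another minimal normal subgroup with $M \ne N$. By minimality $M \cap N = 1$, so $[M,N] \le M \cap N = 1$, meaning every element of $M$ commutes with every element of $N$. Pick $1 \ne a \in N$, $1 \ne m \in M$, and (as above, using that $N$ is nonabelian with trivial center) $1 \ne c \in N$ with $[a,c] \ne 1$. Then $[m,a]=1$, $[m,c]=1$, $[a,c]\ne 1$ with $m \ne 1$, contradicting CT. Therefore $N$ is the unique minimal normal subgroup, $G$ is monolithic, and its monolith $N$ is a simple nonabelian CT group, as required.

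The main obstacle I anticipate is the structural claim that a minimal normal subgroup in a group with a composition series is a finite direct product of isomorphic simple groups — this is standard when the minimal normal subgroup itself has a composition series (which it does, as a subnormal subgroup, or more directly because it is a chief factor), but one should be a little careful stating it in the possibly-infinite setting; once that is in hand, every other step is a short CT manipulation of the form "$x$ commutes with $y$, $y$ commutes with $z$, but $x,z$ don't, contradiction." The repeated use of the fact that a nonabelian simple group has trivial center (so that the required non-commuting witness $c$ exists) should be noted explicitly.
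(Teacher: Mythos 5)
Your proposal is correct and follows essentially the same route as the paper: both pass from the composition series to a chief series, take the bottom term as a minimal normal subgroup, invoke the standard fact (Hall, Theorem 8.6.1) that it is a direct power of a simple group, and use CT --- via the observation that a direct product of nonabelian groups cannot be CT --- to force the power to be $1$ and the minimal normal subgroup to be unique. Your explicit three-element commuting witnesses merely spell out what the paper states as that observation, so there is no substantive difference.
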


\begin{proof} Notice that if the monolith $N \cong PSL(2,K)$ for $K$ a field of characteristic $2$, which is the situation for finite CT groups with no abelian normal subgroups then $G$ would not be CSA. However as pointed out above there do exists simple nonabelian CT groups that are CSA. 

If $H$ is a group then a descending chain of subgroups
$$ H = H_0 \supset H_1 \supset \cdots H_n$$
is a {\bf chief series} (see [Hal] P. 124) from $H$ to $H_n$ provided $H_i$ is normal in $H$ for all $i=0,1,...,n$ and for all $i=0,...,n$, $H_i$ is maximal normal in $H_{i-1}$. 

Let $G$ be a CT group with a composition series and no abelian normal subgroup. Since $G$ is assumed to have a composition series it follows from Theorem 8.6.1 ([Ha], p.131) that $G$ has a chief series
$$ G = G_0 \supset G_1 \supset \cdots \supset G_{n-1} \supset G_n = \{1\}.$$
Let $M = G_{n-1}$. Then $G_n = \{1\}$ is maximal normal in $M$ and hence there is no subgroup $N$ normal in $G$ such that $M \supset N \supset \{1\}$. It follows that $M$ is a minimal normal subgroup in $G$. 

We claim that $M$ is unique. Suppose that $M_1$ and $M_2$ are minimal normal subgroups of $G$ with $M_1 \ne M_2$. By assumption neither is abelian. By minimality $M_1 \cap M_2 = \{1\}$. It follows that the subgroup $H = \langle M_1,M_2 \rangle$ generated by $M_1,M_2$ is their direct product. That is $H = M_1 \times M_2$. However a direct product of nonabelian groups is not CT a contradiction since $G$ is CT and CT is inherited by subgroups. Therefore $M$ is a unique minimal normal subgroup and hence $G$ is monolithic with monolith $M$.

Again from Theorem 8.6.1 in [Ha] $M$ is a direct power $A^m$ of a simple group $A$. Since $G$ contains no normal abelian subgroup it follows that $A^m$ is not abelian and hence $A$ is not abelian. If $m > 1$ the monolith $A^m$ is not CT, again a contradiction and therefore $m = 1$ and the monolith is a nonabelian simple CT group.
\end{proof}

We now consider monolithic groups with monolith isomorphic to $PSL(2,K)$ for a field $K$ of characteristic $2$.

\begin{theorem} Let $G$ be a monolithic group with monolith isomorphic to $PSL(2,K)$ where $K$ is a field of characteristic $2$ with $|K| \ge 4$. Then if $G$ is CT we must have $G \cong PSL(2,K)$ and hence $G$ is non CSA.
\end{theorem}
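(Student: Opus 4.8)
The plan is to exploit the fact that $N \cong PSL(2,K)$ sits inside $G$ as the monolith, so every nontrivial normal subgroup of $G$ contains $N$, and to leverage the structure of centralizers in $PSL(2,K)$ together with the CT hypothesis. First I would pass to the centralizer $C = C_G(N)$ of the monolith in $G$. Since $N$ is nonabelian, $C \cap N = Z(N) = \{1\}$, so $C$ is a normal subgroup of $G$ meeting $N$ trivially; by the monolithic hypothesis this forces $C = \{1\}$. Hence $G$ acts faithfully by conjugation on $N$, i.e. $G$ embeds in $\mathrm{Aut}(N) = \mathrm{Aut}(PSL(2,K))$, with $N = \mathrm{Inn}(N) \le G$. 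The goal then reduces to showing that no element of $\mathrm{Aut}(PSL(2,K))$ outside $\mathrm{Inn}(N)$ can be adjoined to $N$ while preserving commutative transitivity.

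The key step is a centralizer computation. Take any $g \in G$. I want to show $g \in N$. Pick a nontrivial element $n \in N$; its centralizer $C_N(n)$ is a maximal abelian subgroup of $N$ (by Harrison's Lemma, since $N$ is CT by Theorem 3.4(1) — or Theorem 2.1(Wu)), and in $PSL(2,K)$ with $\mathrm{char}(K)=2$ these are the unipotent subgroups (conjugates of the upper-triangular unipotents, $\cong (K,+)$) and the (split or nonsplit) tori. Since $g^{-1}Ng = N$, conjugation by $g$ permutes the maximal abelian subgroups of $N$. Now I would choose two nontrivial elements $a, b \in N$ lying in distinct maximal abelian subgroups of $N$, so that $[a,b]\neq 1$; such a pair exists since $|K|\ge 4$ ensures $N$ is nonabelian with more than one conjugacy class of maximal abelian subgroup. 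The CT property of $G$ constrains how $g$ can interact with $N$: if $g$ commuted with some nontrivial $n\in N$, then $g \in C_G(n)$, which is abelian (as $G$ is CT), so $C_G(n) \supseteq \langle C_N(n), g\rangle$ is abelian, forcing $g$ to normalize — in fact centralize — the whole maximal abelian subgroup $C_N(n)$ of $N$. The strategy is to show that an automorphism of $PSL(2,K)$ (char $2$) that centralizes a maximal abelian subgroup, or more generally acts on $N$ in a CT-compatible way, must be inner; combined with $C_G(N)=1$ this yields $g\in N$.

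The main obstacle will be handling automorphisms of $PSL(2,K) = PGL(2,K) = SL(2,K)$ (these coincide in characteristic $2$) that are not inner: these are the field automorphisms of $K$ (and there are no graph automorphisms for $A_1$). A field automorphism $\sigma$ of $K$ of infinite order, or even of finite order, does not centralize any unipotent subgroup pointwise unless it is trivial on the relevant parameters — so the real work is to rule out the extension $N \rtimes \langle\sigma\rangle$ being CT. Here I would argue directly: take $g$ inducing a nontrivial field automorphism $\sigma$; choose a unipotent $u = \pm\bigl(\begin{smallmatrix}1&\lambda\\0&1\end{smallmatrix}\bigr)$ with $\sigma(\lambda)\ne\lambda$, so $g$ does not commute with $u$. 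Then pick $v$ unipotent in the same root subgroup with $\sigma$ fixing its parameter (e.g. $\lambda \in$ prime field, always fixed) and a third element appropriately; one builds a witnessing triple $a,b,c$ with $[a,b]=[b,c]=1$ but $[a,c]\ne 1$ inside $N\rtimes\langle g\rangle$, contradicting CT — the cleanest route is: $g$ centralizes some nontrivial $w\in N$ (e.g. a $\mathbb{F}_2$-point of a torus or unipotent subgroup fixed by $\sigma$), and $w$ commutes with elements of its maximal abelian subgroup $C_N(w)$, yet $g$ moves other elements of $C_N(w)$ nontrivially, breaking transitivity. Once every $g\in G$ is forced into $N$, we conclude $G = N \cong PSL(2,K)$, and non-CSA follows immediately from Theorem 3.2.
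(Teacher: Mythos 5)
Your overall strategy is the paper's: make $G$ act faithfully by conjugation on the monolith $N$, embed $G$ into $\mathrm{Aut}(N)$, and then use CT to exclude outer automorphisms. Your faithfulness step ($C_G(N)$ is normal and meets $N$ trivially, hence is trivial by monolithicity) is a clean alternative to the paper's, which gets $\ker(G\to\mathrm{Aut}(N))=\{1\}$ from CT; and your concluding triple for a nontrivial field automorphism $\sigma$ --- a unipotent $w$ with entries in the prime field (hence fixed by $\sigma$), a unipotent $u$ in the same root subgroup whose parameter is moved by $\sigma$, and $g$ itself --- is in substance the paper's computation with $\alpha,\tau,\beta$ in $\mathrm{Aut}(SL(2,K))$. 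For elements of $G$ acting as pure field automorphisms your argument works, provided you add the reduction left implicit both by you and by the paper: if $g$ acts as $A\mapsto PA^{\sigma}P^{-1}$ with $P\in SL(2,K)$, first multiply $g$ by the element of $N\le G$ realizing $\iota_P^{-1}$, so as to obtain an element of $G$ acting as $\rho_\sigma$ alone.

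The genuine gap is your identification of $\mathrm{Aut}(N)$. The claim that $PSL(2,K)=PGL(2,K)$ in characteristic $2$, so that the only outer automorphisms are field automorphisms, holds only for perfect $K$; for non-perfect $K$ (e.g.\ $K=\mathbb{F}_2(t)$) one has $PGL(2,K)/PSL(2,K)\cong K^{*}/(K^{*})^{2}\neq 1$, and conjugation by a matrix of non-square determinant is an outer automorphism with trivial field part. Your tools cannot dispose of these: the assertion that an automorphism centralizing a maximal abelian subgroup must be inner fails (conjugation by $\mathrm{diag}(d,1)$ with $d$ a non-square centralizes the diagonal torus pointwise), and your ``cleanest route'' presupposes that $g$ commutes with some nontrivial element of $N$, which can also fail (conjugation by $P$ with $P^{2}=tI$ fixes no nontrivial element of $SL(2,K)$, since $a^{2}+tb^{2}=1$ forces $b=0$, $a=1$ in characteristic $2$). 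In fact no CT-violating triple exists here at all: $PGL(2,K)$ embeds in $PGL(2,\overline{K})=PSL(2,\overline{K})$, which is CT by part (1) of Theorem 3.4, so $PGL(2,K)$ is itself a CT monolithic group with monolith $PSL(2,K)$, and for non-perfect $K$ it is a proper, non-simple overgroup of the monolith. So your proposed route goes through only for perfect $K$ (in particular finite $K$); to be fair, the paper's own proof has the same blind spot, since after quoting Wan's form $A\mapsto PA^{\sigma}P^{-1}$ it excludes only the pure field automorphisms before asserting that the image of $G$ is inner. To complete your argument you must either assume $K$ perfect or weaken the conclusion to $PSL(2,K)\le G\le PGL(2,K)$.
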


We need two preliminary results before we prove this theorem.

\begin{lemma} Let $G$ be a monolithic group with monolith $M$ isomorphic to $PSL(2,K)$ where $K$ is a field of characteristic $2$. If $G$  is CT, $G$ then embeds into Aut$(M)$.
\end{lemma}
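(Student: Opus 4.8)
The plan is to realize $G$ inside $\mathrm{Aut}(M)$ through its conjugation action on the monolith and to show that this action is faithful. Since $M$ is the monolith of $G$ it is a normal subgroup, so for each $g\in G$ conjugation by $g$ restricts to an automorphism of $M$, and we obtain a homomorphism $\varphi\colon G\to\mathrm{Aut}(M)$, $g\mapsto(m\mapsto gmg^{-1})$. Its kernel is precisely the centralizer $C_G(M)$. Hence the whole statement reduces to the claim $C_G(M)=\{1\}$, after which $G\cong\varphi(G)\le\mathrm{Aut}(M)$.

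To establish $C_G(M)=\{1\}$ I would argue by contradiction, and this is the step where commutative transitivity is genuinely used. Suppose some $c\in C_G(M)$ is nontrivial. The group $M\cong PSL(2,K)$ with $\mathrm{char}(K)=2$ is nonabelian — every field of characteristic $2$ contains a copy of $\Z_2$, so $M$ contains a copy of $PSL(2,\Z_2)\cong S_3$ — hence there are $m_1,m_2\in M$ with $[m_1,m_2]\ne1$, and in particular $m_1\ne1\ne m_2$. Now $c$ commutes with $m_1$ and with $m_2$, and $c\ne1$, so applying the CT axiom with $c$ as the (nontrivial) middle term forces $[m_1,m_2]=1$, a contradiction. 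Therefore $C_G(M)=\{1\}$ and $\varphi$ embeds $G$ into $\mathrm{Aut}(M)$.

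There is no serious obstacle here; the point worth flagging is that the monolithic hypothesis by itself does not give the conclusion — it is CT that does the work. Indeed $C_G(M)$ is always normal in $G$, but for a general monolithic group with nonabelian simple monolith $S$ one can have $C_G(S)\ne\{1\}$ (for instance $G=S\times\Z$ is monolithic with monolith $S$ yet $C_G(S)=\{1\}\times\Z$; of course such a $G$ is not CT). Finally, for use in the following theorem I would record that, since $M$ is centreless, the restriction $\varphi|_M$ identifies $M$ with $\mathrm{Inn}(M)$, so under the embedding $G$ becomes a subgroup of $\mathrm{Aut}(M)$ containing $\mathrm{Inn}(M)\cong M$.
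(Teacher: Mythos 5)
Your proposal is correct and follows essentially the same route as the paper: both use the conjugation action on the normal monolith $M$, identify the kernel with $C_G(M)$, and apply CT to a noncommuting pair in the nonabelian group $M$ to force the kernel to be trivial. The extra observations (that CT rather than monolithicity does the work, and that $\mathrm{Inn}(M)\cong M$ sits inside the image) are consistent with how the lemma is used later in the paper.
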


\begin{proof} Since $M$ is normal in $G$ we get a map $\phi$ from $G$ to Aut$(M)$ by mapping $g \in G$ to conjugation on $M$ by $g$. Now $M = SL(2,K)$ is nonabelian. Choose $a,b \in M$ such that $ab \ne ba$ and suppose that $z \in ker(\phi)$. Then $zaz^{-1} = a$ and $zbz^{-1}z = b$. Now $z$ commutes with both $a$ and $b$. If $z \ne 1$ and $ab \ne ba$ this contradicts the assumption that $G$ is CT. Hence $ker(\phi) = \{1\}$ and hence $\phi$ is an embedding.
\end{proof}

Recall that an algebraic structure is {\bf rigid} if it admits only the identity automorphism.

\begin{theorem} Let $K$ be a field of characteristic $2$ with $|K| \ge 4$. Then $K$ is not rigid.
\end{theorem}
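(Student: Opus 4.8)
The plan is to split according to whether $K$ is perfect.

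\emph{$K$ perfect.} Then the Frobenius map $\varphi\colon K\to K$, $\varphi(x)=x^{2}$, is an endomorphism of $K$: it is multiplicative always and additive because $\mathrm{char}\,K=2$. It is injective since $K$ is a field (no nonzero nilpotents), and it is surjective precisely because $K$ is perfect, so $\varphi\in\mathrm{Aut}(K)$. If $\varphi$ were the identity then $x^{2}=x$ for every $x\in K$, forcing $|K|\le 2$ and contradicting $|K|\ge 4$; hence $\varphi\neq\mathrm{id}$ and $K$ is not rigid. Since every algebraic extension of the perfect field $\mathbb{F}_{2}$ is again perfect, this already settles every $K$ that is algebraic over its prime field (all finite fields of characteristic $2$ of order $\ge 4$, $\overline{\mathbb{F}}_{2}$, and so on).

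\emph{$K$ imperfect.} Now $K^{2}\subsetneq K$; pick $a\in K\setminus K^{2}$. Such an $a$ must be transcendental over $\mathbb{F}_{2}$, for if $a$ were algebraic then $\mathbb{F}_{2}(a)$ would be a finite, hence perfect, subfield of $K$, whence $a\in\mathbb{F}_{2}(a)=\mathbb{F}_{2}(a)^{2}\subseteq K^{2}$. So $K$ has positive transcendence degree over $\mathbb{F}_{2}$. Fix a transcendence basis $B$ for $K/\mathbb{F}_{2}$ with $t\in B$ and put $L=\mathbb{F}_{2}(B)$, so $K/L$ is algebraic. The purely transcendental field $L$ is visibly not rigid: the assignment $t\mapsto t+1$, together with the identity on $B\setminus\{t\}$, extends uniquely to a nontrivial $\mathbb{F}_{2}$-automorphism $\sigma_{0}$ of $L$ (as does any nontrivial permutation of $B$ when $|B|\ge 2$). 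The remaining task is to promote $\sigma_{0}$ to an automorphism of $K$.

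That lifting is where I expect the real difficulty to lie, and it is the main obstacle. An automorphism of a subfield always extends to an \emph{embedding} of an algebraic extension into a fixed algebraic closure, but the image need not be the whole extension, so one does not get an automorphism of $K$ for free. The natural remedies are either to arrange that $K$ is a normal extension of $L$ (or of some intermediate field carrying an obvious automorphism), in which case $\sigma_{0}$ extends to $\overline{K}$ and restricts to an automorphism of $K$, or else to exploit the purely inseparable tower $K\supsetneq K^{2}\supsetneq K^{4}\supsetneq\cdots$ together with the Frobenius isomorphisms $K\cong K^{2^{n}}$. I would present the perfect case as the clean core of the proof — and note it is the only case needed when $K$ is locally finite, which is the setting supplied by Wu's theorem — and concentrate the genuine effort on making the lifting step go through in the imperfect case.
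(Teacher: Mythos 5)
Your perfect-field case is correct and is precisely the first step of the paper's proof: the Frobenius $x\mapsto x^{2}$ is an automorphism of a perfect field of characteristic $2$, and it is nontrivial because $x^{2}=x$ has only the roots $0,1$ while $|K|\ge 4$. The problem is the imperfect case, which you explicitly leave open: you identify the lifting difficulty, list possible remedies, and stop. That is the entire content of the theorem beyond the easy half, and it is exactly where the paper's proof does all of its work, so as written the proposal establishes only half the statement. Your fallback remark that the perfect case is "the only case needed" in Wu's locally finite setting does not repair this: the paper invokes this theorem in the proof of Theorem 4.2 for an arbitrary field of characteristic $2$ with $|K|\ge 4$, not only for locally finite ones.

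Moreover, the route you sketch for the imperfect case runs into a genuine obstruction rather than a merely technical one. An automorphism of $L=\mathbb{F}_2(B)$ extends only to an embedding of the algebraic extension $K$ into a fixed algebraic closure; nothing forces the image to be $K$, $K$ need not be normal over $L$ or over any intermediate field carrying a visible automorphism, and the Frobenius maps give isomorphisms of $K$ onto the proper subfields $K^{2^{n}}$, not automorphisms of $K$, precisely because $K$ is imperfect. In fact no soft lifting argument can work in this generality: the function field of a smooth projective curve of genus at least $3$ over $\mathbb{F}_2$ with trivial automorphism group (such curves exist) is an imperfect characteristic-$2$ field with no nontrivial automorphisms at all, so the imperfect case is genuinely delicate — and it is also the delicate point of the paper's own argument. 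The paper does not attempt field-theoretic lifting: assuming $K$ imperfect and rigid, it fixes a non-square $\theta$, uses Wan's description of the automorphisms of $SL(2,K)$ together with the completeness theorems quoted from Scott to write $GL(2,K)$ as the internal direct product of $M=SL(2,K)$ with its centralizer (the scalar matrices), and then the factorization of the diagonal matrix with entries $\theta$ and $1$ forces $\theta=\beta^{2}$, contradicting the choice of $\theta$. To complete your argument you would need to replace the transcendence-basis lifting by a contradiction argument of this group-theoretic kind (or restrict the class of fields); the lifting plan by itself cannot be pushed through.
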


\begin{proof} Let $K$ be a field of characteristic $2$ with $|K| \ge 4$. Then the map $\sigma:K \rightarrow K$ given by $\sigma(x) = x^2$ for all $x \in K$ is an injective homomorphism. If it is surjective we are done since the only roots of $x^2 - x$ over $K$ are $0$ or $1$ and we thus get a nontrivial automorphism.

Assume now that $K$ contains an element $\theta$ which is not a square in $K$ and assume that $K$ is rigid.  Now consider the simple group $M = SL(2,K)$. Recall that $SL(2,K) = PSL(2,K)$. By Theorem 15.3.2 in [Sc] we have that Aut$(M)$ is complete. Since we assumed that $K$ is rigid Aut$(M)$ consists solely of inner automorphisms and hence $M$ is isomorphic to Aut$(M)$ and hence $M$ is complete. By Theorem 15.3.1 in [Sc] if $H$ is any overgroup of $M$ in which $M$ is normal then $H$ has the internal direct product representation $M \times C_H(M)$ where $C_H(M)$ is the centralier of $H$ in $M$.

Thus since $M = SL(2,K)$ is normal in $H = GL(2,K)$, $H$ has the internal direct product representation $H = M \times C_H(M)$. We claim that the centralizer of $M$ in $H$ consists of the nonzero scalar matrices. It is straightforward that any matrix in $H$ which commutes with both
$$  X =  \left ( \begin{matrix} 1&1\\0&1 \end{matrix} \right ) \text { and }  Y = \left ( \begin{matrix} 0&1\\1&0 \end{matrix} \right )$$
must be scalar.

Now fix an element $\theta \in K$ which is not a square in $K$ and let
$$C = \left ( \begin{matrix} \theta&0\\0&1 \end{matrix} \right )$$
Then $C$ has a unique representation of the form $AB$ where $A \in M$ and $B \in C_H(M)$. 

Since $B \in C_H(M)$ the matrix for $B$ is a scalar matrix $\left ( \begin{matrix} \beta&0\\0&\beta \end{matrix} \right )$
for some nonzero $\beta \in K$. But then
$$\theta = \det \left ( \begin{matrix} \theta&0\\0&1 \end{matrix} \right ) = \det(A)\det(B) = \beta^2.$$
Then $\theta$ is a square in $K$ contrary to assumption. It follows that $K$ cannot be rigid.
 \end{proof}

  We now give the proof of Theorem 4.2.

\begin{proof} (Theorem 4.2) Let $G$ be a monolithic group with monolith isomorphic to $PSL(2,K)$ where $K$ is a field of characteristic $2$ and $|K| \ge 4$ and suppose that $G$ is CT. 

From Theorem 4.3 we have that $PSL(2,K)$ is not rigid. From [Wan] we have that an automorphism of $SL(2,K)$ is of the form $A \mapsto PA^\sigma P^{-1}$ or $A \mapsto P(A^\iota)^t P^{-1}$ where $\sigma$ is an automorphism of $K$, $\iota$ is an antiautomorphism of $K$ and $A^t$ is the transpose of $A$. Since $K$ is commutative being a field any anti-automorphism is already an automorphism. Further the transpose operator is an anti-automorphism but not an automorphism of $SL(2,K)$. It follows than that the maps of the form $A \mapsto P(A^\iota)^t P^{-1}$ do not occur here.

Since $K$ is not rigid there is a nonidentity autmorphism of $K$. Let $\sigma$ be such a nonidentity automorphism and let $\rho$ be the nontrivial automorphism of $PSL(2,K)$ given by
$$ \rho  \left ( \begin{matrix} a&b\\c&d \end{matrix} \right ) \mapsto  \left ( \begin{matrix} \sigma(a)&\sigma(b)\\\sigma(c)&\sigma(d) \end{matrix} \right ).$$
Now let $A,B$ be the elements of $SL(2,K)$ given by
$$ A = \left ( \begin{matrix} 1&1\\0&1 \end{matrix} \right ) \text{ and } B = \left ( \begin{matrix} 1&0\\1&1 \end{matrix} \right ).$$
Let $\alpha$ denote conjugation in $SL(2,K)$ by $A$ and $\beta$ denote conjugation in $SL(2,K)$ by $B$. 

By direct computation we have that in Aut$(SL(2,K)$ the automorphisms $\alpha,\beta$ both commute with $\tau$, that is $\alpha\tau = \tau\alpha$ and $\tau\beta = \beta \tau$.  However again by direct computation we have $\alpha\beta \ne \beta\alpha$. 

By Lemma 4.1 $G$ embeds in Aut($M) = $Aut$(SL(2,K)$ and it follows that the image of $G$ in the automorphism group must also be CT. This combined with the computations above imply that no transformation of the form
$$ \rho \left ( \begin{matrix} a&b\\c&d \end{matrix} \right ) \mapsto  \left ( \begin{matrix} \sigma(a)&\sigma(b)\\\sigma(c)&\sigma(d) \end{matrix} \right )$$
for a nontrivial automorphism $\sigma$ of $K$ can occur in the image of $G$ in Aut$(M)$. Thus the image of $G$ in Aut$(M)$ must consist solely of inner automorphisms of $M$. Hence for every $g \in G$ there exists an $h \in M$ such that $gxg^{-1} = hxh^{-1}$ for all $x \in M$. Therefore
$$ g^{-1}h \in C_G(M) = ker(G \rightarrow \text{Aut}(M)) = \{1\}.$$ 
Thus $g = h \in M$ and since $g,h$ were arbitrary $G = M$ completing the proof.     
\end{proof}

Finally recall that a class of groups $\cal X$ is {\bf axiomatic} is this class is defined in terms of a set of first order sentences (see [FGMRS]) or axioms. We have seen that the CT property is given by the group axioms together with
$$\text(CT: ) \forall x,y,z (((y \ne 1) \wedge ([x,y] =1) \wedge ([y,z] = 1) \rightarrow ([x,z] = 1))$$
while the class of CSA groups is captured by
 
$$\text(CT: ) \forall x,y,z (((y \ne 1) \wedge ([x,y] =1) \wedge ([y,z] = 1) \rightarrow ([x,z] = 1))$$
$$\text(MAL: ) \forall x,y,z ((x \ne 1) \wedge (y \ne 1) \wedge  ([x,y] =1) \wedge ([x,z^{-1}yz]=1) \rightarrow ([y,z] = 1)).$$

Finally being CT but not CSA is captured (in the presence of the group axioms) by
$$\text(CT: ) \forall x,y,z (((y \ne 1) \wedge ([x,y] =1) \wedge ([y,z] = 1) \rightarrow ([x,z] = 1))$$
$$\text(NOTMAL: ) \exists x,y,z ((x \ne 1) \wedge (y \ne 1) \wedge  ([x,y] =1) \wedge ([x,z^{-1}yz]=1) \wedge ([y,z] \ne 1)).$$

If $\cal{G}$ represent the class of CT groups, $\cal{H}$ the class of CSA groups and $\cal{M} = \cal{G} \cap (\cal{H})^c$ the class of CT but not CSA groups, then all three classes are axiomatic.

\begin{theorem} The class $\cal{G}$ of CT groups, the class $\cal{H}$ of CSA groups and the class  $\cal{M} = \cal{G} \cap (\cal{H})^c$ of CT non CSA groups are all axiomatic.
\end{theorem}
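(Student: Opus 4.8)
The plan is to read the word \emph{axiomatic} in its literal model-theoretic sense: a class of groups is axiomatic exactly when it is the class of all groups (equivalently, of all $L_0$-structures satisfying the group axioms) that satisfy some fixed set of first-order sentences of $L_0$. Under this reading the theorem is essentially bookkeeping, since all the required sentences have already been displayed above; the only point that needs any care is verifying that intersecting $\cal{G}$ with the complement $(\cal{H})^c$ does not carry us outside the first-order world.

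First I would dispose of $\cal{G}$ and $\cal{H}$. By definition a group $G$ belongs to $\cal{G}$ if and only if it satisfies the universal sentence (CT), so $\cal{G}$ is the class of models of the finite set consisting of the group axioms together with (CT); hence $\cal{G}$ is axiomatic, and in fact finitely axiomatizable. Likewise, as recalled above, a group is CSA if and only if it satisfies both (CT) and (MAL), so $\cal{H}$ is the class of models of the finite set consisting of the group axioms, (CT), and (MAL), and is therefore axiomatic as well.

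The one substantive case is $\cal{M} = \cal{G}\cap(\cal{H})^c$. Among groups, the CSA property is equivalent to the conjunction of (CT) and (MAL), so its negation is equivalent to $\neg(\mathrm{CT})\vee\neg(\mathrm{MAL})$; conjoining this with (CT) and simplifying leaves exactly $\neg(\mathrm{MAL})$, which is precisely the existential sentence (NOTMAL) exhibited above. Hence a group lies in $\cal{M}$ if and only if it satisfies the group axioms, the universal sentence (CT), and the existential sentence (NOTMAL). Since the negation of a first-order sentence is again a first-order sentence, this is once more a finite set of sentences of $L_0$, so $\cal{M}$ too is axiomatic, indeed finitely axiomatizable. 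I expect the only real pitfall to be the temptation to prove more than is claimed: unlike $\cal{G}$ and $\cal{H}$, the class $\cal{M}$ is not closed under passage to subgroups — a nontrivial cyclic subgroup of a CT non-CSA group is abelian, hence CSA — so $\cal{M}$ cannot be captured by universal sentences alone, yet the mixed universal/existential axiom set above still witnesses that it is axiomatic. No estimate, construction, or case analysis is needed beyond quoting the sentences already written down in Sections 2 and 4.
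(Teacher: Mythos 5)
Your proposal is correct and follows essentially the same route as the paper: the classes $\cal{G}$ and $\cal{H}$ are axiomatized by the group axioms together with (CT), respectively (CT) and (MAL), and $\cal{M}$ by the group axioms together with (CT) and the negation (NOTMAL) of (MAL), exactly the sentence sets the paper displays before the theorem. Your added remark that $\cal{M}$ is not subgroup-closed (hence not universally axiomatizable) is a correct observation beyond what the paper records, but the core argument coincides with the paper's.
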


\section{References}

\noindent \lbrack BB] B. Baumslag, Residually free groups, \textbf{\ Proc. London Math. Soc.} (3), 17, 1967, 635 -- 645. 

\noindent \lbrack GB] G. Baumslag,  On generalised free products, \textbf {Math. Z.},78,1962,423--438.

\noindent \lbrack BeS] J. L. Bell and A. B. Slomson, Models and Ultraproducts: An Introduction,
Second Revised Printing, North-Holland, Amsterdam, 1971.

\noindent \lbrack Bo] O. Bogopolski, A surface analogue of a theorem of Magnus, \textbf{Cont. Math}, vol. 352, 2005, 55-89.

\noindent \lbrack BoS] O. Bogopolski and K. Sviridov, A Magnus theorem for some one-relator groups, in \textbf{The Zieschang Gedenkschrift}, 2008, 63-73.

\noindent \lbrack CK] C. C. Chang and H. J. Keisler, Model Theory, Second Edition,
North-Holland,Amsterdam, 1977.

\noindent \lbrack CFR] L. Ciobanu, B. Fine and G. Rosenberger, Classes of Groups Generalizing a Theorem of Benjamin Baumslag, - to appear. 

\noindent \lbrack F] B. Fine, Power Conjugacy and SQ-universality in Fuchsian and Kleinian Groups, in: \textbf{Modular Functions in Analysis and Number Theory}, University of Pittsburgh Press, 1983, 41-55.

\noindent \lbrack FMgrRR] B. Fine, A. Myasnikov, V. gr. Rebel and G. Rosenberger, A Classification of Conjugately Separated Abelian, Commutative Transitive and Restricted Gromov One-Relator Groups, \textbf{Result. Math.}, 50, 2007, 183-193.

\noindent \lbrack FGMRS] B. Fine, A. Gaglione, A. Myasnikov,
G. Rosenberger and D. Spellman, The Elementary Theory of Groups, DeGruyter, Berlin 2015 - to appear.

\noindent \lbrack FGRS]  B. Fine, A. Gaglione,
G. Rosenberger and D. Spellman, Something for Nothing: Some Consequences of the Solution to the Tarski Problems, to appear  \textbf{Groups  St. Andrews 2013}.

\noindent \lbrack FR] B. Fine, G. Rosenberger, Reflections on Commutative Transitivity, in \textbf{Aspects of Infinite 
Groups}, World Scientific Press, 2009, 112-130.

\noindent \lbrack GS]  A. Gaglione and D. Spellman, Even More Model Theory of Free Groups, in \textbf{Infinite Groups and Group Rings }edited by J.Corson, M.Dixon, M.Evans, F.Rohl, World Scientific Press, 1993, 37-40.

\noindent \lbrack GLS]  A. Gaglione, S. Lipschutz and D. Spellman, Almost Locally Free Groups and a Theorem of Magnus, \textbf{ J. groups,compleity and Cryptology},  1,  2009, 181-198.

\noindent \lbrack GKM] D. Gildenhuys, O. Kharlampovich and A. Myasnikov,  CSA Groups and Separated
Free Constructions, \textbf{ Bull. Austral. Math. Soc.}, 52,  1995,  63-84.

\noindent \lbrack Hal] M. Hall, \textbf{Group Theory}, 
Macmillan 1965.

\noindent \lbrack Ha] N. Harrison,  Real Length Functions in Groups, \textbf{Trans. Amer. Math. Soc.}, 174,  1972, 77--106.

\noindent \lbrack Ho] J. Howie,  Some Results on One-Relator Surface Groups, \textbf{ Boletin de la
Sociedad Matematica Mexicana}, 10, 2004, 255-262. 
 
\noindent \lbrack Hod] W. Hodges,  \textbf{Building Models by Games} Cambridge University Press, 1985.

\noindent \lbrack Ja] N. Jacobson, \textbf{Basic Algebra}, Dover Mathematics. 

\noindent \lbrack KhM 1] O. Kharlamapovich and A. Myasnikov, {\it  
Irreducible affine varieties over a free group: I. Irreducibility
of quadratic equations and Nullstellensatz}, \textbf{  J. of Algebra},
 200,  1998,  472-516.  

\noindent \lbrack KhM 2]  O. Kharlamapovich and A. Myasnikov, 
{\it Affine varieties over a free group: II. Systems in
triangular quasi-quadratic form and a description of residually
free groups}, \textbf{  J. of Algebra},  200,  1998,  517-569.

\noindent \lbrack KhM 3]  O. Kharlamapovich and A. Myasnikov, 
{\it The Implicit Function Theorem over Free groups }, \textbf{ J. Alg. },290,2005,
 1-203. 

\noindent \lbrack KhM 4]  O. Kharlamapovich and A. Myasnikov 
{\it Effective JSJ Decompositions } \textbf{ Cont. Math.}, 378, 2005, 87-211.

\noindent \lbrack KhM 5]  O. Kharlamapovich and A. Myasnikov, 
{\it Elementary Theory of Free Nonabelian Groups } \textbf{ J. Alg.}, 302, 2006, 451-552.

\noindent \lbrack La] S. Lang, \textbf{ Algebra}, Addison-Wessley, 1984.

\noindent \lbrack LR] F. Levin and G. Rosenberger, On Power Commutative and Commutation Transitive
Groups, \textbf{Proc. Groups St Andrews 1985}, Cambridge University Press, 1986, 249-253.
  
\noindent \lbrack LS] R. C. Lyndon and P. E. Schupp, \textbf{ Combinatorial Group Theory}, 
Springer-Verlag 1977.

\noindent \lbrack MKS] W. Magnus, A. Karrass and D. Solitar, \textbf{ Combinatorial Group Theory}, 
Wiley-Interscience 1966.

\noindent \lbrack MR] A. Myasnikov and V. Remeslennikov, Length functions on free exponential groups, \textbf{Proc. of Intern. 
Conference in Analysis and Geometry}, Omsk, 1995, 59-61.

\noindent \lbrack N] H. Neumann, \textbf{Varieties of Groups}, Springer-Verlag, 1968.
 
\noindent \lbrack O] A. Olshanskii, On Relatively Hyperbolic and G-subgroups of Hyperbolic Groups \textbf{Int. J. Alg. and Comput.} 3, 1993, 365-409.

\noindent \lbrack Re] V. N. Remeslennikov,   $\exists$-free groups \textbf{ Siberian Mat. J.},  30,
 1989,  998--1001. 

\noindent \lbrack Sc] W. R. Scott, \textbf{Group Theory}, Dover Reprints.

\noindent \lbrack Se 1] Z. Sela,  Diophantine Geometry over Groups I: Makanin-Razborov Diagrams, 
 \textbf{Publ. Math. de IHES}  93,  2001,  31-105. 

\noindent \lbrack Se 2] Z. Sela,  Diophantine Geometry over Groups II: Completions, Closures and
Fromal Solutions,  \textbf{ Israel Jour. of Math.},  104,  2003,  173-254. 

\noindent \lbrack Se 3] Z. Sela,  Diophantine Geometry over Groups III: Rigid and Solid Solutions, 
 \textbf{Israel Jour. of Math.},  147,  2005,  1-73.

\noindent \lbrack Se 4] Z. Sela,  Diophantine Geometry over Groups IV: An Itertaive Procedure for
Validation of a Sentence, \textbf{Israel Jour. of Math.}, 143,  2004,  1-71. 

\noindent \lbrack Se 5] Z. Sela,  Diophantine Geometry over Groups V: Quantifier Elimination,
 \textbf{Israel Jour. of Math.},  150,  2005,  1-9.

\noindent \lbrack Su]	M. Suzuki, The Nonexistence of a Certain Type of Simple Groups of Odd
Order, \textbf{Proc. Amer. Math. Soc.}, 8, 1957, 686-695. 

\noindent \lbrack W]	L. Weisner, Groups in which the normalizer of every element but the
identity is abelian, \textbf{Bull. Amer. Math. Soc.}, 31, 1925, 413-416. 

\noindent \lbrack Wu]	Y. F. Wu, Groups in which Commutativity is a Transitive Relation, \textbf{J. of Algebra}, 207, 1998,165-181.

\noindent \lbrack Wan] Z. K. Wan,  A proof of the automorphisms of linear groups over a field of characteristic 2,
 \textbf{Sci. Sin.},  11,  1962,  1183-1194.

\end{document}